\documentclass[11pt]{amsart}

\usepackage{amsmath, amssymb, amsthm, amsfonts, fullpage}
\usepackage[mathcal]{eucal}

\input xy
\xyoption{all}

\usepackage{tikz-cd}
\usepackage{dynkin-diagrams}
\usepackage{array}   
\newcolumntype{C}{>{$}c<{$}} 
\usepackage{diagbox}

\usepackage{hyperref}
\hypersetup{hidelinks}

\numberwithin{equation}{section}

\theoremstyle{plain}

\newtheorem{theorem}{Theorem}[section]
\newtheorem{lemma}[theorem]{Lemma}

\newtheorem{prop}[theorem]{Proposition}
\newtheorem{cor}[theorem]{Corollary}

\theoremstyle{definition}

\newtheorem{defn}[theorem]{Definition}

\newtheorem{exam}[theorem]{Example}

\newtheorem{remark}[theorem]{Remark}

\def\AA{\mathbb{A}}

\def\CC{\mathbb{C}}

\def\GG{\mathbb{G}}

\def\QQ{\mathbb{Q}}
\def\RR{\mathbb{R}}

\def\ZZ{\mathbb{Z}}

\newcommand\cA{\mathcal{A}}
\newcommand\cB{\mathcal{B}}

\newcommand\cD{\mathcal{D}}

\newcommand\cI{\mathcal{I}}
\newcommand\cJ{\mathcal{J}}

\newcommand\cL{\mathcal{L}}
\newcommand\cM{\mathcal{M}}

\newcommand\cO{\mathcal{O}}

\newcommand\cS{\mathcal{S}}
\newcommand\cT{\mathcal{T}}

\newcommand\cV{\mathcal{V}}

\newcommand\cY{\mathcal{Y}}

\def\bC{\mathbf{C}}

\def\bO{\mathbf{O}}
\def\bP{\mathbf{P}}

\newcommand\frc{\mathfrak{c}}

\newcommand\frg{\mathfrak{g}}

\newcommand\frt{\mathfrak{t}}

\newcommand\ab{\textup{ab}}

\newcommand\alg{\textup{alg}}

\newcommand\an{\textup{an}}

\newcommand{\codim}{\textup{codim}}

\newcommand\Gal{\textup{Gal}}

\newcommand{\Gr}{\textup{Gr}}
\newcommand{\gr}{\textup{gr}}

\newcommand\Lie{\textup{Lie}}

\newcommand{\ord}{\textup{ord}}

\newcommand\pr{\textup{pr}}

\newcommand\pt{\textup{pt}}

\newcommand{\red}{\textup{red}}

\newcommand\rk{\textup{rk}}
\newcommand\rs{\textup{rs}}

\newcommand\sgn{\textup{sgn}}

\newcommand\Spec{\textup{Spec}\,}

\newcommand\Sym{\textup{Sym}}

\newcommand{\val}{\textup{val}}

\newcommand\Hom{\textup{Hom}}

\newcommand\Map{\textup{Map}}

\newcommand\GL{\textup{GL}}

\newcommand\Sp{\textup{Sp}}

\newcommand\ra{\rightarrow}

\newcommand{\Gm}{\GG_m}

\newcommand\quash[1]{}

\newcommand{\ov}{\overline}

\newcommand{\cohog}[2]{\textup{H}^{#1}({#2})}     
\newcommand{\cohoc}[2]{\textup{H}_{c}^{#1}({#2})}     

\renewcommand\a\alpha
\renewcommand\b\beta
\newcommand\G\Gamma
\newcommand\g\gamma
\renewcommand\d\delta
\newcommand\D\Delta
\newcommand{\e}{\epsilon}

\renewcommand{\th}{\theta}

\newcommand{\s}{\sigma}

\newcommand{\z}{\zeta}

\renewcommand{\l}{\lambda}

\usepackage{graphicx}

\newcommand{\et}{\textup{ét}}

\newcommand{\init}{\operatorname{in}}
\newcommand{\Trop}{\operatorname{Trop}}
\newcommand{\trop}{\operatorname{trop}}

\newcommand{\Ctalg}{\CC\langle t\rangle}
\newcommand{\Adm}{\operatorname{Adm}}

\newcommand{\Sch}{\textup{Sch}}
\newcommand{\Spf}{\textup{Spf}}
\newcommand{\Spa}{\textup{Spa}}
\newcommand{\rig}{\textup{rig}}

\author{Konstantin Jakob}
\address{(KJ) TU Darmstadt, Fachbereich Mathematik, Schlossgartenstraße 7, 64289 Darmstadt, Germany} 
\email{jakob@mathematik.tu-darmstadt.de}

\author{Masoud Kamgarpour}
\address{(MK) School of Mathematics and Physics, The University of Queensland, Australia} 
\email{masoud@uq.edu.au}

\author{Ian Le}
\address{(IL) Mathematical Sciences Institute Australian National University, Canberra ACT
2601, Australia} 
\email{ian.le@anu.edu.au}

\subjclass[2020]{Primary 14F35, 14T90; Secondary 20F36}
\keywords{tropicalization, algebraic loops, hyperplane arrangements,
braid groups, braid varieties}

\title{Algebraic loops, braids, and tropicalization}

\begin{document}

\begin{abstract} Let $Y$ be a smooth complex algebraic variety. We develop a tropical framework for studying algebraic loops, i.e., free homotopy classes in $Y^{\an}$ associated with formal loops $\Spec\CC(\!(t)\!)\to Y$. When $Y$ is very affine, every formal loop determines an integral point of $\Trop(Y)$. For schön varieties, we prove that the associated algebraic loop is determined by this tropical point together with a connected component of the corresponding initial variety. Consequently, when all initial varieties are connected, integral tropical points classify algebraic loops. We apply this theory to the case when $Y$ is a hyperplane arrangement complement. In this case, we identify the initial varieties with complements of graded arrangements and obtain explicit representatives for algebraic loops as products of relative full twists. In type $A$, these representatives recover pure algebraic braids. For root arrangements, our construction gives a tropical interpretation of the split root valuation data of Goresky, Kottwitz, and MacPherson. Finally, we prove that the cohomology of braid varieties associated with positive algebraic braids depends only on their root valuation data.
\end{abstract}

\maketitle

\tableofcontents

\section{Introduction}

Loops are among the most fundamental objects in topology. In algebraic geometry, however, one naturally encounters \emph{formal loops}, namely morphisms
\[
f:\Spec\CC(\!(t)\!)\longrightarrow Y,
\]
rather than continuous maps from the circle. Formal loops arise in many areas of algebraic geometry and representation theory, including motivic integration, affine Springer theory, the study of irregular connections, and algebraic braids. In these settings, one often attaches valuation data to a formal loop: pole orders of eigenvalues, root valuation data, fission trees, or the contact order data of Puiseux branches of plane curves.

The purpose of this paper is to show that, in the split setting, these valuation data and the associated topological monodromy are governed by a single tropical mechanism. We associate with each formal loop a canonical free homotopy class in $Y^{\an}$, called its algebraic loop, and relate this class to the tropicalization of the formal loop. For schön very affine varieties, the algebraic loop is determined by the integral tropical point together with the connected component of the corresponding initial variety, which is the space of leading term coefficients of formal loops. For hyperplane arrangement complements, this abstract classification becomes an explicit product formula, with applications to algebraic braids, root valuation data, and braid varieties.

\subsection{Main results} 
Let $Y$ be a smooth connected complex algebraic variety. Then we can associate to every formal loop
$f$ 
a canonical free homotopy class
\[
\gamma_f\in[S^1,Y^{\an}],
\]
called the \emph{algebraic loop} determined by $f$. Roughly speaking, this loop is obtained by approximating $f$ by analytic maps on sufficiently small punctured discs and restricting these maps to the boundary circle. 
\begin{theorem}[Algebraic loop map, see Theorem~\ref{thm:Lalg}]
There is a natural map \[
\cL_{\alg}:Y(\CC(\!(t)\!))\longrightarrow[S^1,Y^{\an}],\qquad f\mapsto \gamma_f, 
\]
compatible with the \'etale loop associated to $f$. 
\end{theorem}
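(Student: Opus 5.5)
Given a formal loop $f\colon \Spec\CC(\!(t)\!)\to Y$, I would construct $\gamma_f$ by approximation, and then show the result is independent of every choice. \emph{Finite type model.} Since $\CC(\!(t)\!)=\varinjlim$ of finitely generated subalgebras, $f$ factors through $\Spec A\to Y$ for some finitely generated $\CC$-algebra $A\subset\CC(\!(t)\!)$. \emph{Convergent approximation.} By Artin approximation, applied over the henselian ring $\CC\{t\}$ of convergent power series (equivalently, because $\CC\{t\}[t^{-1}]$ is dense in $\CC(\!(t)\!)$ and $Y$ is smooth), one can find $g\in Y(\CC\{t\}[t^{-1}])$ agreeing with $f$ to arbitrarily high $t$-adic order. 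Smoothness of $Y$ matters here: it lets us lift approximate solutions to exact ones.

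\emph{Loop construction.} Such a $g$ is defined on a small punctured disc $D^*_\varepsilon$. Set $\gamma_f$ to be the class of $\theta\mapsto g(re^{i\theta})$ for $0<r<\varepsilon$. Changing $r$ gives a free homotopy, so the class depends only on $g$.

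\emph{Independence of $g$.} This is the main obstacle. Suppose $g_0,g_1$ are two convergent approximations agreeing with $f$ modulo $t^N$ for $N$ large. I would try to connect them by a path of convergent loops $g_s$ defined on a common punctured disc. First reduce, via an étale chart or an affine open containing the generic point image, to $Y$ closed in $\AA^n$, or better, étale over an open of $\AA^d$. Then $g_s$ can be taken as the straight-line interpolation in coordinates, corrected back onto $Y$ by the implicit function theorem. This correction is valid uniformly because the two maps are $t$-adically close and both have bounded pole order.

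Alternatively, one can avoid paths altogether: use that $Y^{\an}$ is locally contractible and triangulable, and that two maps $D^*_\varepsilon\to Y^{\an}$ which are uniformly close on a circle are homotopic. Closeness holds because $g_1-g_0=O(t^N)$ while the poles are bounded. The danger is that the image approaches the boundary of $Y$, i.e.\ the missing divisor in a compactification $\overline Y$. I would handle this by choosing a smooth compactification $\overline Y$ with normal crossings boundary. Using the valuative criterion, $f$ extends to $\Spec\CC[[t]]\to\overline Y$, and in local log coordinates around $f(0)$ the loops become monomials times units. Closeness of the unit parts then gives the homotopy.

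\emph{Naturality and étale compatibility.} Functoriality in $Y$ is immediate from the construction. For compatibility with the étale loop, I would compare as follows. The étale loop is the image of the generator of $\pi_1^{\et}(\Spec\CC(\!(t)\!))=\hat\ZZ$ in $\pi_1^{\et}(Y)$ up to conjugacy. Via $g$, this map factors through $\pi_1^{\et}$ of $\Spec\CC\{t\}[t^{-1}]$, which equals $\hat\ZZ$ by Riemann existence, and then through the analytic punctured disc, whose $\pi_1$ is $\ZZ$ generated by the circle. The comparison isomorphism $\widehat{\pi_1(Y^{\an})}\cong\pi_1^{\et}(Y)$ then shows that the profinite completion of $\gamma_f$ is the étale loop.
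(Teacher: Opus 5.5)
Your construction and your proof that $\gamma_f$ is well defined are essentially the paper's. The paper also extends $f$ to an arc in an SNC compactification $X$ with center $x$ and contact orders $m_i$, and approximates it to order $N>\max m_i$ by a convergent arc. It then notes that in a chart $U^\circ\cong(\Delta^\times)^r\times\Delta^{n-r}$ the boundary coordinates are $t^{m_i}v_i(t)$ with $v_i(0)\neq 0$, so the loop is freely homotopic to $\mu_1^{m_1}\cdots\mu_r^{m_r}$. This is your ``monomials times units'' step. Closeness of the unit parts is not actually needed, only $v_i(0)\neq 0$, because free homotopy classes in $U^\circ$ are detected by winding numbers.

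Your first option, straight-line interpolation corrected by the implicit function theorem, is not justified as written. Near the puncture the loops approach the boundary of $Y$, where you have no uniform control, so drop it. Approximating inside $Y(\CC\{t\}[t^{-1}])$ rather than in $X$ does buy you something: $\gamma_f$ is then defined without reference to $X$, and the paper's common-resolution step becomes unnecessary. You only need to note that $t$-adic closeness in $Y(F)\subset X(F)=X(\CC[[t]])$ forces the extended arcs to share the center and contact orders of $f$.

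The genuine gap is in the étale compatibility. The étale loop $\ell_f$ is defined through $f_*$, and $f$ does not factor through $\Spec\CC\{t\}[t^{-1}]$ unless $f$ is itself convergent. What your Riemann-existence argument proves is $\varphi(\gamma_g)=\ell_g$ for the approximation $g$. The essential missing step is $\ell_f=\ell_g$; nothing in your étale paragraph uses that $g$ is close to $f$.

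The paper avoids this by computing $\ell_f$ from $f$ itself. It lifts the arc to $V=\Spec\cO_{X,x}^{\mathrm{sh}}$, where $\pi_1^{\et}(V^\circ)\cong\widehat\ZZ^{r}$. The Kummer covers $w_i^q=z_i$ then give $p_*(\zeta)=(m_1,\ldots,m_r)$, which depends only on data shared by $f$ and $g$ and matches the meridian formula.

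Alternatively, you can argue via Krasner/Hensel. For each finite étale Galois cover $Y'\to Y$ with group $Q$, the class of the $Q$-torsor $f^*Y'$ in $H^1(F,Q)$ is locally constant in the $t$-adic topology. However, the required closeness depends on $Y'$, so no single $g$ works for all covers at once. You must choose $g$ depending on $Q$, use that $\gamma_g=\gamma_f$ for every sufficiently close convergent $g$, and conclude from the fact that conjugacy classes in a profinite group are detected in its finite quotients.
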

We denote the image of this map by $[S^1,Y^{\an}]^{\alg}$. The main goal of this paper is to understand the algebraic loop map using tropical data. To this end, we first need an understanding of the relationship between the loop space and the tropicalization of $Y$.

We will focus on the case where $Y$ is a very affine variety, i.e. an affine variety whose coordinate ring is generated by units. Such varieties possess a canonical intrinsic torus whose character lattice is the group of units modulo constants. Consequently they admit canonical tropicalizations $\mathrm{Trop}(Y)$.  Every formal loop $f$ determines a tropical point
\[
w_f\in\Trop(Y)\cap X_*(T),
\]
which is characterized by $\langle \chi, w_f \rangle = \val_{t}(f^*\chi)$ for every character $\chi$ of $T$. The tropical point records the orders of zeros or poles of units along the formal loop. For every tropical point
\[
w\in\Trop(Y)\cap X_*(T),
\]
we define the corresponding \emph{valuation stratum}
\[
L_wY\subset LY
\]
inside the formal loop space of $Y$. Although $L_wY$ is defined as the \emph{set} of formal loops with valuation $w$, we show that it carries a canonical scheme structure. For this, one uses the so-called Gröbner degeneration $\cY_{w} \to \AA^1$, which is a flat degeneration with general fiber $Y$ and special fiber given by the initial variety $\init_{w}Y$. More precisely, we have: 

\begin{theorem}[Tropicalization and the loop space, see Theorem \ref{thm:tropical-fiber-main}] Let $\mathcal Y_w$ denote the Gröbner degeneration of $Y$ associated with $w$ and $\widehat{\mathcal Y}_w$ the corresponding formal degeneration. Then we have
\[
L_wY\cong\Gr_\infty(\widehat{\mathcal Y}_w),
\]
where $\Gr_\infty$ is the infinite Greenberg scheme. Moreover, there is a natural morphism $L_{w}Y \to \init_{w}(Y)$, called initial term morphism, which is a locally trivial infinite affine space bundle when $Y$ and $\init_w{Y}$ are smooth.
\end{theorem}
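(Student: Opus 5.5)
We work with explicit coordinates. Choose a closed embedding $Y\hookrightarrow T=\Gm^n$ given by units $\chi_1,\dots,\chi_n$ generating $\CC[Y]$, with ideal $I\subset\CC[x_1^{\pm1},\dots,x_n^{\pm1}]$. The plan is to compare both sides of the isomorphism through the substitution $x_i\mapsto t^{w_i}x_i$, and then to read off the initial term morphism from reduction modulo $t$.

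\textbf{Step 1: the Gröbner degeneration.} Recall that $\cY_w\subset T\times\AA^1$ is the closure of the family whose fiber over $s\neq0$ is $s^{-w}\cdot Y$. Its ideal $\widetilde I_w$ is generated by the polynomials $t^{-\min_u(\langle u,w\rangle)}g(t^{w}x)$ for $g\in I$. This family is flat over $\AA^1$, because it is $t$-torsion free, and its special fiber is $\init_wY$. Since $w\in\Trop(Y)$, the initial ideal contains no monomials, so $\init_wY\neq\emptyset$. The formal degeneration $\widehat{\cY}_w$ is the completion along $t=0$. Its $\CC[[t]]$-points are therefore the tuples $y\in(\CC[[t]]^\times)^n$ satisfying $\widetilde I_w$. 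Note that invertibility is imposed because the ambient space is $T$.

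\textbf{Step 2: the bijection.} Given a formal loop $f$ with $w_f=w$, write $f^*x_i=t^{w_i}y_i$ with $y_i\in\CC[[t]]^\times$. This is possible exactly because $\val_t(f^*\chi_i)=w_i$. Then for each $g\in I$ we have $0=g(t^wy)$, so $y$ satisfies $\widetilde I_w$ after dividing by the appropriate power of $t$. Hence $y\in\widehat{\cY}_w(\CC[[t]])$. The converse is clear: any such $y$ gives a loop $t^wy$ in $Y$ whose valuation vector is $w$. The same argument applies with $R[[t]]$-coefficients for any $\CC$-algebra $R$, because the units condition on leading coefficients is functorial. This functoriality gives the scheme structure on $L_wY$ and the isomorphism $L_wY\cong\Gr_\infty(\widehat{\cY}_w)$ of functors.

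One point needs care. For $R$ non-reduced, we define $L_wY(R)$ as the set of $R(\!(t)\!)$-points of the form $t^wy$ with $y\in(R[[t]]^\times)^n$. It must be checked that this defines a locally closed subfunctor of $LY$ that is independent of the chosen embedding. Independence follows because changing generators of the unit group acts by a monomial change of coordinates, which transports $w$ and $\widetilde I_w$ compatibly.

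\textbf{Step 3: the initial term morphism.} The morphism $L_wY\to\init_wY$ is the truncation $\Gr_\infty(\widehat{\cY}_w)\to\Gr_0(\widehat{\cY}_w)=\init_wY$, given by $y\mapsto y\bmod t$. Now suppose $Y$ and $\init_wY$ are smooth. By flatness, and because smoothness of the special fiber together with flatness gives smoothness, $\cY_w\to\AA^1$ is smooth near the special fiber. By Hensel's lemma, each truncation $\Gr_{m+1}\to\Gr_m$ is then Zariski-locally trivial with fiber $\AA^{d}$, where $d=\dim Y$. Concretely, this is a torsor under the pullback of the tangent bundle of the special fiber. Since $\init_wY$ is affine, these torsors are trivial.

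The main obstacle is passing from the finite levels to the infinite projective limit while keeping local triviality. Here we use the standard Greenberg/Sebag argument for smooth formal schemes. Locally on $\init_wY$, choose étale coordinates $\cY_w\to\AA^d_{\AA^1}$. These coordinates identify the jets compatibly at all levels, giving $\Gr_\infty\cong\init_wY\times\AA^\infty$ locally.
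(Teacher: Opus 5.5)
Your proposal is correct and follows the paper's proof. The identification $L_wY\cong\Gr_\infty(\widehat{\cY}_w)$ comes from writing $f=t^{w}y$ with $y$ a $T(R[[t]])$-point satisfying the twisted equations, which is the content of the paper's two lemmas on $t^{-w}f\in T(R[[t]])$ and on extension to $\cY_w$. The initial-term morphism is truncation to $\Gr_0(\widehat{\cY}_w)=\init_w(Y)$. Local triviality comes from smoothness of $\cY_w\to\AA^1$ (flat with smooth fibers) together with the standard structure theory of Greenberg schemes of smooth formal schemes: the paper just cites \cite{CNS}, while you sketch that argument via \'etale coordinates. One small slip: the finite-level torsors $\Gr_{m+1}\to\Gr_m$ are trivial because $\Gr_m$ is affine, not because $\init_w(Y)$ is; this step is superseded by the \'etale-coordinate argument anyway.
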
 

When the Gröbner degeneration is smooth, the valuation stratum is an infinite affine-space bundle over the initial variety. Consequently, the topology of the valuation strata is governed by the topology of initial varieties. This allows us to identify the connected components of the loop space
with tropical points enhanced by connected components of the corresponding
initial varieties. Consequently, an algebraic loop depends only on this enhanced tropical datum. When all initial
degenerations are connected, it depends only on the tropicalization:

\begin{theorem}[Tropical classification of algebraic loops, see Theorem \ref{thm:trop-classification}]
Let $Y$ be a schön very affine variety. The algebraic loop associated with a formal loop depends only on its tropicalization together with the connected component of the corresponding initial variety. 

If $Y$ is wunderschön, i.e. every initial variety is connected, then the algebraic loop map induces a natural bijection
\[
\pi_0(LY)
\xrightarrow{\sim}
\Trop(Y)\cap X_*(T)
\xrightarrow{\sim}
[S^1,Y^{\an}]^{\alg}.
\]
\end{theorem}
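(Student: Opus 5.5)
The plan is to show that the algebraic loop map $\cL_{\alg}$ is locally constant on the formal loop space $LY$, and then to read off $\pi_0(LY)$ from Theorem~\ref{thm:tropical-fiber-main}. Once $\cL_{\alg}$ factors through $\pi_0(LY)$, the statement reduces to identifying the connected components of $LY$.

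We begin with the decomposition $LY=\bigsqcup_{w}L_wY$, where $w$ ranges over $\Trop(Y)\cap X_*(T)$. Schönness means that every initial variety $\init_wY$ is smooth. By Theorem~\ref{thm:tropical-fiber-main}, the initial term morphism $L_wY\to\init_wY$ is then a locally trivial infinite affine space bundle. Hence
\[
\pi_0(L_wY)\cong\pi_0(\init_wY).
\]
Next we check that the strata $L_wY$ are open and closed in $LY$, i.e. that they are unions of components. The function $f\mapsto \val_t(f^*\chi)$ is locally constant on $LY$ for each character $\chi$. Indeed, $\chi$ is a unit on $Y$, so $f^*\chi$ is invertible in $\CC(\!(t)\!)$, and on an ind-piece of bounded pole order the leading coefficient cannot vanish. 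Because $\chi^{-1}$ is also regular, both the valuation of $f^*\chi$ and that of $f^*\chi^{-1}$ are bounded by the ind-filtration, so the locus $\val_t(f^*\chi)=n$ is open and closed. Together these facts give
\[
\pi_0(LY)=\bigsqcup_{w}\pi_0(\init_wY),
\]
which is the set of enhanced tropical data.

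The key step, and the one I expect to be the main obstacle, is the local constancy of $\cL_{\alg}$. Given two formal loops $f_0,f_1$ in the same component, I would connect them by an algebraic family. Concretely, take a connected curve $C$ and a $C$-point of a finite-type truncation $\Gr_n(\widehat{\cY}_w)$, lifted to a $C$-family of formal loops. Approximating by convergent loops, as in the construction of Theorem~\ref{thm:Lalg}, gives a map $C\times D^\times_\epsilon\to Y^{\an}$ defined on a uniform punctured disc over compact subsets of $C$. Restricting to circles then yields a homotopy between $\gamma_{f_0}$ and $\gamma_{f_1}$.

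The difficulty is uniformity of the radius $\epsilon$ over the family. I would first handle it via the Gröbner model: write $f=t^{w}\cdot g$, where $g$ is a $\CC[\![t]\!]$-point of $\widehat{\cY}_w$ whose reduction lies in $\init_wY$. A convergent $g$ on a disc of fixed radius can be moved within the smooth family $\cY_w\to\AA^1$ by the analytic implicit function theorem. Alternatively, one can use that $\gamma_f$ depends only on a sufficiently high jet of $f$, which is part of the compatibility established in Theorem~\ref{thm:Lalg}.

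Finally, in the wunderschön case each $\init_wY$ is connected, so $\pi_0(LY)\to\Trop(Y)\cap X_*(T)$ is a bijection. Surjectivity holds because integral tropical points are exactly valuations of $\CC(\!(t)\!)$-points; this uses nonemptiness of initial varieties together with Hensel lifting along the smooth model. The induced map to $[S^1,Y^{\an}]^{\alg}$ is surjective by the definition of the image.

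Injectivity is the remaining point. I would detect $w$ topologically. Each character $\chi:Y\to\Gm$ induces a map $[S^1,Y^{\an}]\to[S^1,\CC^\times]=\ZZ$, and $\chi\circ\gamma_f$ is the algebraic loop of $f^*\chi\in\CC(\!(t)\!)^\times$. Its winding number is $\val_t(f^*\chi)=\langle\chi,w_f\rangle$. The characters span $X^*(T)$, so $w_f$ is recovered from $\gamma_f$, and the composite map is a bijection.
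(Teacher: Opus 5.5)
Your overall architecture matches the paper's: $\pi_0(LY)\cong\coprod_w\pi_0(\init_w(Y))$ from Theorem~\ref{thm:tropical-fiber-main} (your open-and-closed argument is already part (3) there), surjectivity onto $[S^1,Y^{\an}]^{\alg}$ by definition, and injectivity by reading off $w_f$ through characters, which is Proposition~\ref{prop:tropwinding}. The gap is in the step you flag yourself, constancy of the algebraic loop on connected components. Neither of the tools you cite closes it.

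The approximation in the proof of Theorem~\ref{thm:Lalg} is Artin approximation of a single arc over $\CC\langle t\rangle$, to an order depending on that arc's contact orders. Applied separately to each member of a $C$-family, it gives approximations with no continuity in the parameter and no common radius. A map $C\times\Delta_\epsilon^\times\to Y^{\an}$ of the kind you want would need a parametric approximation theorem, which is an extra input you do not supply. Likewise, ``$\gamma_f$ depends only on a sufficiently high jet'' is a pointwise statement and says nothing about how $\gamma_f$ varies as the jet varies. The paper does not argue analytically here. It builds the family version of the algebraic loop map from the rigid Betti realization of Achinger--Talpo (Theorem~\ref{thm:alg-loop-families}). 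This gives constancy over any connected finite-type base (Corollary~\ref{cor:constancy-components}). Corollary~\ref{cor:constant-components} then only has to join two loops by a chain of curves in $\init_w(Y)$, lifted through trivializing opens of the initial-term bundle.

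Your Gröbner-model idea is the right repair, and it can be carried out without any family of formal loops. Since $Y=\init_0(Y)$ and every $\init_w(Y)$ are smooth, Proposition~\ref{p:degenloop}, Lemma~\ref{lem:central-loop} and Theorem~\ref{thm:initialloop} apply. Let $p:\cY_w^{\an}\to\CC$ be the Gröbner degeneration. They show that, in a fiber $p^{-1}(r)\cong Y^{\an}$ with $r>0$ small, $\gamma_f$ is homotopic to the parallel transport of the cocharacter loop $\theta\mapsto(e^{i\theta})^wc_f$.

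Now suppose $c_{f_0}$ and $c_{f_1}$ lie in one component. Pick a path $c_s$ between them in $\init_w(Y)^{\an}$, and apply Lemma~\ref{lem:loctrivcompact} to the compact set $\{(e^{i\theta})^wc_s\}$ swept out along it. By Corollary~\ref{cor:centloop}, $\gamma_{f_0}$ and $\gamma_{f_1}$ are homotopic, in a common fiber $p^{-1}(r)$, to the transports of the cocharacter loops at the endpoints. The transports of the loops at $c_s$ join these two.

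This gives an elementary proof of the first assertion, independent of Achinger--Talpo. What the paper's heavier route buys is constancy over arbitrary connected algebraic families of formal loops, which it reuses later, e.g.\ in Proposition~\ref{prop:root-valuations-determine-braid}.
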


\begin{remark} In the final stages of the preparation of the manuscript, we became aware that the existence of the algebraic loop map in Theorem 1.1 follows from the work of Achinger–Talpo \cite{AT}, while the first part of Theorem 1.2 is already present in \cite{KU}. See Remarks \ref{AT} and \ref{r:KU} for more details. We have retained our formulations and proofs because they fit naturally into the framework developed here and lead to the tropical classification in Theorem 1.3 and the applications below. 
\end{remark}

\subsection{Application to hyperplane arrangements} 
Let $V$ be a complex vector space, let $\cA$ be an essential
linear hyperplane arrangement in $V$, and set
\[
Y:=V\setminus\bigcup_{H\in\cA}H.
\]
Then $Y$ is a smooth very affine variety. Our aim is to give an explicit description of algebraic loops in $Y$.

To state the theorem, we need some notation.
After choosing a base point
$y\in Y$, we write
$P:=\pi_1(Y^{\an},y)$
for the arrangement group of $(V,\cA)$. The scaling action of $\CC^\times$ on $Y$ defines the full twist
\[
\pi_P\in P,
\qquad
e^{i\theta}\longmapsto e^{i\theta}y.
\]
More generally, each flat $X$ in the intersection lattice of $\cA$
determines an intersection subgroup $P_X\subset P$ and a corresponding
full twist $\pi_{P_X}\in P_X$. If $P'\subset P''$ are nested intersection
subgroups, we define the relative full twist by
\[
\pi_{P''/P'}:=\pi_{P''}\pi_{P'}^{-1}.
\]
The precise construction of the intersection subgroups and their full
twists is recalled in Section~\ref{ss:intersection-sub}.

The intrinsic torus of $Y$ is naturally identified with
$\Gm^{\cA}$. Accordingly, the tropicalization of a formal loop
$f\in Y(F)$ may be regarded as the \emph{hyperplane valuation function}
\[
d_f:\cA\longrightarrow\ZZ,
\qquad
H\longmapsto \val_t\bigl(\ell_H(f)\bigr),
\]
where $\ell_H$ is any defining linear form for $H$. This definition is
independent of the choice of $\ell_H$, and Proposition
\ref{prop:tropwinding} identifies $d_f(H)$ with the winding number of
$\gamma_f$ around $H$.

Write
\[
\{d_f(H)\mid H\in\cA\}=\{d_1<\cdots<d_r\}.
\]
The subarrangements
\[
\cA_{>d_j}:=
\{H\in\cA\mid d_f(H)>d_j\},
\qquad 1\leq j<r,
\]
determine a decreasing filtration
\[
P=P_0\supset P_1\supset\cdots\supset P_{r-1}
\]
by intersection subgroups. This filtration is an arrangement-theoretic
generalization of the fission sequences appearing, for example, in
\cite{DRT}. 

\begin{theorem}[Explicit representatives for algebraic loops,
Theorem \ref{thm:braidrep}]
The
conjugacy class of the algebraic loop $\gamma_f$ is represented by
\[
\pi_{P_0/P_1}^{d_1}
\pi_{P_1/P_2}^{d_2}
\cdots
\pi_{P_{r-2}/P_{r-1}}^{d_{r-1}}
\pi_{P_{r-1}}^{d_r}.
\]
\end{theorem}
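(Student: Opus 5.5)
By Theorem~\ref{thm:trop-classification}, we are free to replace $f$ by any formal loop with the same tropical point $w=d_f$ whose leading coefficients lie in the same connected component of $\init_w Y$. The plan is to pick a convenient such loop and compute its monodromy by hand.

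We first identify $\init_w Y$ with the complement of the graded arrangement attached to the filtration $\cA_{>d_1}\supset\cdots\supset\cA_{>d_{r-1}}$. This complement is connected: it is a complement of hyperplanes in a product of vector spaces, namely the associated graded of $V$ with respect to the flag of flats $X_j:=\bigcap_{H\in\cA_{>d_j}}H$. Since the Gröbner degeneration of a linear arrangement complement is smooth, the component ambiguity disappears. So it suffices to exhibit one formal loop with valuation function $d_f$ and to compute its algebraic loop.

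We choose a splitting $V=U_1\oplus\cdots\oplus U_r$ adapted to the flag $V=X_0\supset X_1\supset\cdots\supset X_{r-1}$, with $X_{j}=U_{j+1}\oplus\cdots\oplus U_r$ (and $X_0=V$). The test loop is
\[
f(t)=\sum_j t^{d_j}u_j,
\]
with $u_j\in U_j$ chosen generically, so that the leading terms land in the initial variety. One then checks that $\val_t\ell_H(f)=d_j$, where $j$ is the smallest index with $\ell_H(u_j)\neq 0$; by the definition of the flag this recovers $d_f(H)$. The loop $f$ converges on the whole punctured disc, so $\gamma_f$ is represented by $\theta\mapsto f(\epsilon e^{i\theta})$ for $\epsilon$ small.

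To compute this loop, we deform it to $\theta\mapsto\sum_j \epsilon^{d_j}e^{id_j\theta}u_j$ and factor it as a concatenation. First we rotate all components together $d_1$ times. Then we rotate the components in $X_1$ by a further $d_2-d_1$ turns, then the components in $X_2$ by $d_3-d_2$ turns, and so on. This factorization is legitimate because the torus $(\CC^\times)^r$, acting by scaling the $U_j$ separately, preserves $Y$ near the point $y=\sum\epsilon^{d_j}u_j$ once the $\epsilon$-scales are sufficiently separated. Indeed, the $(\CC^\times)^r$-orbit of $y$ for separated scales stays in $Y$, since each $\ell_H$ is dominated by its leading term. The loop $\theta\mapsto\sum e^{id_j\theta}(\cdot)$ is therefore homotopic, in the abelian $\pi_1$ of the torus orbit, to the product of the loops $\lambda_k(e^{i\theta})^{d_k-d_{k-1}}$, where $\lambda_k$ scales $X_{k-1}$ and $d_0=0$.

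We then identify these loops. With base point $y$ chosen so that the $X_k$-part is very small relative to the rest, scaling $X_{k-1}$ is precisely the full twist $\pi_{P_{k-1}}$ of the intersection subgroup $P_{k-1}=P_{X_{k-1}}$, by the construction recalled in Section~\ref{ss:intersection-sub}. This gives
\[
\gamma_f\sim \pi_{P_0}^{d_1}\pi_{P_1}^{d_2-d_1}\cdots\pi_{P_{r-1}}^{d_r-d_{r-1}}.
\]
These full twists commute, since they come from a torus action. Regrouping with $\pi_{P''/P'}=\pi_{P''}\pi_{P'}^{-1}$ yields
\[
\pi_{P_0/P_1}^{d_1}\pi_{P_1/P_2}^{d_2}\cdots\pi_{P_{r-1}}^{d_r}.
\]

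The main obstacle is this identification of the scaling loops with the full twists $\pi_{P_X}$ in $P$ itself, rather than in $\pi_1$ of the localized arrangement. It requires the base-point and path conventions for intersection subgroups to be compatible with the nested scales. I would handle it by induction on $r$, using the local product structure near a generic point of $X_1$.
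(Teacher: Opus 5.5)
Your strategy is right and is essentially the paper's computation, but your flag is upside down, and as written this breaks two steps. Since $\cA_{>d_1}\supset\cA_{>d_2}\supset\cdots$, the flats $X_j=\bigcap_{H\in\cA_{>d_j}}H$ \emph{increase}: $0=X_0\subset X_1\subset\cdots\subset X_r=V$. Here $X_0$ is cut out by all of $\cA$, hence is $0$ by essentiality; only the groups $P_j=P_{X_j}$ decrease.

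The first step that fails is the test loop. With your splitting $X_j=U_{j+1}\oplus\cdots\oplus U_r$, a hyperplane with $d(H)=d_r$ contains $X_{r-1}=U_r$, so $\val_t\ell_H(f)$ cannot equal $d_r$, and $f$ has the wrong tropical point. The second is the identification of the scaling loops. Rotating the flat $X_{k-1}$ itself, with a small normal component held fixed, gives the relative twist $\pi_{P/P_{k-1}}$ by Lemma~\ref{lem:relFTloop}, not $\pi_{P_{k-1}}$.

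The fix is to take $X_j=U_1\oplus\cdots\oplus U_j$. Then $U_k\cong V_{d_k}/V_{d_k-1}$ is the $k$-th graded piece, and $f=\sum_k t^{d_k}u_k$ with $(u_k)_k\in\gr_wM$ has valuation $d_f$. Let $\lambda_k$ rotate the \emph{complement} $U_k\oplus\cdots\oplus U_r$ of $X_{k-1}$. Write $y=x+n$ with $x=\sum_{j<k}\epsilon^{d_j}u_j$, a generic point of $X_{k-1}$, and $n=\sum_{j\ge k}\epsilon^{d_j}u_j$. The vector $n$ avoids $\cA_{>d_{k-1}}$ and is much smaller than the distance from $x$ to the hyperplanes not containing $X_{k-1}$. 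Then $\theta\mapsto x+e^{i\theta}n$ is the local scaling loop defining $\pi_{P_{k-1}}$ (compare the proof of Lemma~\ref{lem:relFTloop}). So no induction on $r$ is needed, and the nested scales make the base points compatible exactly as in the paper. The remaining steps are correct: the compact orbit $\{\sum_j\epsilon^{d_j}e^{i\phi_j}u_j\}$ lies in $M$ by leading-term dominance, so the $\lambda_k$ commute, and the exponents telescope.

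With this correction, your argument and the paper's share the same core. The paper's weighted lift $\widetilde c(z)=\sum_j z^{d_j}c_j$ of the central point is your test loop with $u_j=c_j$. The differences lie in how the reduction and the factorization are organized.

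For the reduction, the paper justifies computing $\gamma_f$ with the weighted lift by transport from the special fiber. This uses Theorem~\ref{thm:initialloop}, Corollary~\ref{cor:relFTfactor} and the last assertion of Corollary~\ref{cor:centloop}, all resting on the local triviality lemma of the appendix. You instead use connectedness of $L_wY\cong\prod_jM_j$ and the constancy principle (Corollary~\ref{cor:constant-components}, packaged in Theorem~\ref{thm:trop-classification}). Your route is shorter and lets you choose any convenient $u_j$, but it depends on the Achinger--Talpo family construction behind Corollary~\ref{cor:constancy-components}.

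For the factorization, the paper uses single-level rotations $\Pi_k$ and identifies each directly with $\pi_{P_{k-1}/P_k}$ by applying Lemma~\ref{lem:relFTloop} inside the local arrangement of $P_{k-1}$. You use cumulative rotations $\lambda_k=\Pi_k\cdots\Pi_r=\pi_{P_{k-1}}$ and then telescope. Your intermediate expression $\pi_P^{d_1}\pi_{P_1}^{d_2-d_1}\cdots\pi_{P_{r-1}}^{d_r-d_{r-1}}$ is exactly the one the paper derives from the ray formula in the remark after the theorem.
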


An important special case is the braid arrangement of type $A_{n-1}$.
Let
\[
V=\CC^n/\CC(1,\ldots,1),
\qquad
\cA=\{x_i=x_j\mid i\neq j\},
\]
and let $W=S_n$ act by permuting the coordinates. The complement $Y$ is
homotopy equivalent to the ordered configuration space of $n$ distinct
points in $\CC$. By the classical theorem of Artin, there is an
isomorphism
\[
\pi_1(Y^{\an})\cong P_n,
\]
where $P_n$ is the pure braid group on $n$ strands.

By a \emph{pure algebraic braid} we mean the local braid monodromy of a
reduced plane curve singularity all of whose branches are smooth. Its
closure is the corresponding algebraic link; see
\cite{EisenbudNeumann}. Under the above identification, we obtain a
natural correspondence
\[
\left\{
\begin{array}{c}
\text{positive algebraic loops}\\
\text{in }Y
\end{array}
\right\}
\longleftrightarrow
\left\{
\begin{array}{c}
\text{pure algebraic}\\
\text{braids}
\end{array}
\right\}.
\]
Here, an algebraic loop is called \emph{positive} if its hyperplane
valuation function satisfies
\[
d_f(H)>0
\qquad\text{for every }H\in\cA.
\]

Returning to an arbitrary essential linear hyperplane arrangement, the
group $\pi_1(Y^{\an})$ is its arrangement group. Positive algebraic
loops in $Y$ may therefore be regarded as arrangement-theoretic
analogues of pure algebraic braids. The extension to the non-pure
setting is discussed further below.

\subsection{Application to braid varieties}

Let $G$ be a simple complex algebraic group with Cartan subalgebra
$\frt$, root system $\Phi$, and Weyl group $W$. Consider the hyperplane complement 
\[
Y:=\frt^{\rs}
=
\frt\setminus\bigcup_{\alpha\in\Phi}\ker(\alpha).
\]
Then
\[
P_W:=\pi_1(Y)
\qquad\text{and}\qquad
B_W:=\pi_1(Y/W)
\]
are respectively the pure braid group and the braid group associated
with $W$.

Let $\beta\in B_W^+$ be a positive braid, and choose a
factorization
\[
\beta=\widetilde w_1\cdots\widetilde w_m,
\qquad w_i\in W,
\]
where $\widetilde w_i$ denotes the positive lift of $w_i$. Let
$\cB$ be the flag variety of $G$, and let
$\bO(w)\subset\cB\times\cB$ denote the $G$-orbit corresponding to
$w\in W$. The braid variety associated with $\beta$ is
\[
Z(\beta)
=
\left\{
(B_0,\ldots,B_m,g)\in\cB^{m+1}\times G
\;\middle|\;
\begin{array}{l}
(B_i,B_{i+1})\in\bO(w_{i+1}),\quad 0\leq i<m,\\
{}^gB_0=B_m
\end{array}
\right\}.
\]
Thus, $Z(\beta)$ parametrizes chains of Borel subgroups in the relative
positions prescribed by $w_1,\ldots,w_m$, together with an element of
$G$ identifying the two endpoints.

A formal loop $f\in Y(\CC(\!(t)\!))$ determines a pure algebraic braid
\[
\beta_f:=\gamma_f\in P_W\subset B_W.
\]
When its root valuations are positive, $\beta_f$ is represented by a
positive braid. Combining our tropical classification with the
horocycle correspondence and the categorical trace formalism, we prove
the following.

\begin{theorem}[Corollary \ref{c:braidVar}]
The cohomology of the braid variety attached to a
positive algebraic braid depends only on its root valuation data.
\label{thm:braid-variety-valuation}
\end{theorem}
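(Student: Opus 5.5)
The plan is to reduce the statement to a chain of identifications: root valuation data, then the tropical point, then the conjugacy class in $P_W$ (and hence in $B_W$), and finally the cohomology of the braid variety as a conjugacy invariant of positive braids.

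\textbf{Step 1: root valuation data determines the conjugacy class of $\beta_f$.} Let $f\in Y(\CC(\!(t)\!))$ with $Y=\frt^{\rs}$. The root valuation data $\alpha\mapsto\val_t(\alpha(f))$ is precisely the hyperplane valuation function $d_f$ on the root arrangement. Since each hyperplane $\ker\alpha$ is counted once for $\pm\alpha$, $d_f$ is the tropical point $w_f\in\Trop(Y)\cap X_*(T)$.

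By Theorem~\ref{thm:braidrep}, the free homotopy class $\gamma_f$ (that is, the conjugacy class of $\beta_f$ in $P_W$) is represented by
\[
\pi_{P_0/P_1}^{d_1}\cdots\pi_{P_{r-1}}^{d_r}.
\]
This product depends only on $d_f$, because the filtration by intersection subgroups is built from the subarrangements $\cA_{>d_j}$. Alternatively, one can invoke the wunderschön case of Theorem~\ref{thm:trop-classification}, provided the initial varieties of the root arrangement are connected. They are: they are complements of graded arrangements, hence hyperplane-arrangement-type complements, hence connected. Either way, two formal loops with the same root valuation data give braids conjugate in $P_W$, hence conjugate in $B_W$.

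\textbf{Step 2: both braids are positive and conjugate in $B_W$.} If the root valuations are positive, each relative full twist power appearing above is a positive braid. Indeed, $\pi_{P_X}$ is the full twist $\Delta_{W_X}^2$ of the parabolic subgroup, and $\pi_{P''/P'}=\Delta_{W''}^2\Delta_{W'}^{-2}$ is positive since $\Delta_{W'}^2$ is central in $B_{W'}$ and right-divides $\Delta_{W''}^2$. So the two positive braids $\beta,\beta'$ coming from formal loops with equal root valuation data are conjugate in $B_W$.

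\textbf{Step 3: the cohomology of $Z(\beta)$ is a conjugacy invariant.} The remaining task is to show that the cohomology of $Z(\beta)$ depends, for positive braids, only on the conjugacy class in $B_W$, and is independent of the chosen factorization into positive lifts. I would realize $Z(\beta)$ via the horocycle correspondence. Its cohomology (with compact supports, or via duality) is the cohomology of $\mathrm{Tr}(\mathcal{T}_\beta)$, the categorical (horizontal) trace in the finite Hecke category of the Rouquier complex $\mathcal{T}_\beta=\mathcal{T}_{w_1}\star\cdots\star\mathcal{T}_{w_m}$. Concretely, $Z(\beta)$ is the fiber product of the convolution space with the diagonal twisted by $g$, which is $\mathrm{hc}$ applied to the convolution and then restricted to the unipotent/identity locus.

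Independence of the factorization follows from the braid relations $\mathcal{T}_{w}\star\mathcal{T}_{w'}\cong\mathcal{T}_{ww'}$ when lengths add. Conjugation invariance follows from the trace property $\mathrm{Tr}(A\star B)\cong\mathrm{Tr}(B\star A)$, applied with invertible Rouquier complexes of arbitrary (not necessarily positive) braids. Combining this with Step 2 gives the result.

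\textbf{Expected obstacle.} The main difficulty is Step 3: making the identification of $H^*(Z(\beta))$ with the trace rigorous, with the correct twists, shifts and weights, and handling the fact that the conjugating element need not be positive. If needed, I would fall back on the geometric fact that cyclic rotation $Z(\widetilde w_1\cdots\widetilde w_m)\cong Z(\widetilde w_2\cdots\widetilde w_m\widetilde w_1)$ is an isomorphism. I would then combine it with the known result that positive conjugate braids are related by cyclic shifts and positive braid relations. That result is a Garside-theoretic statement, available up to super-summit set manipulations.
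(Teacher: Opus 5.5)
Your proof is correct, and your Step 3 is the paper's own mechanism. Corollary~\ref{cor:conjinv} derives conjugacy invariance of $\chi(\Delta(\beta))$ and $\chi(\nabla(\beta))$ from Lusztig's cyclicity $\chi(K*L)\cong\chi(L*K)$ (Proposition~\ref{prop:cattrace}), using standard objects $\Delta(\gamma)$ of arbitrary, not necessarily positive, conjugators $\gamma\in B_W$.

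The genuine difference is in Step 1. There the paper does not use Theorem~\ref{thm:braidrep} or Theorem~\ref{thm:trop-classification}; it uses Proposition~\ref{prop:root-valuations-determine-braid}. After the base change $t=u^m$, that argument truncates an equivariant lift to a Laurent polynomial. It does so through an $\AA^1$-family, so the loop is unchanged by Corollary~\ref{cor:constancy-components}. It then joins any two truncations inside the connected hyperplane complement $\cJ_N(d,w)$.

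\emph{What the paper's route buys.} It works for every $f\in\frc^{\rs}(F)$, including twisted data $[d,w]$ with $w\neq 1$. These are non-pure algebraic braids, for which no explicit formula for $\gamma_f$ is available.

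\emph{What your route buys.} It covers only the pure case $f\in\frt^{\rs}(F)$, which is the reading of the statement in the introduction. In exchange it produces an explicit positive representative $\pi_{W/W_1}^{d_1}\cdots\pi_{W_{r-1}}^{d_r}$. The paper uses this product only in part (2) of Corollary~\ref{c:braidVar}, and otherwise works with the gallery braid $\beta_f^+$ built from critical directions.

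In the pure case both routes rest on the same input: constancy of $\cL_{\alg}$ along connected algebraic families of loops with fixed valuations. Indeed, $\cJ_N(d,1)=\prod_{j\le N}M_j$ is just a finite truncation of the valuation stratum $L_{w_f}Y$.

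Two small corrections.

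First, ``root valuation data'' in the sense of Goresky--Kottwitz--MacPherson means the $W$-orbit of $d_f$, not the function itself. So add one line: for $x\in W$, the loops $f'$ and $x^{-1}f'$ have the same image in $\frc^{\rs}(F)$. Hence they have the same class in $B_W$ by Corollary~\ref{cor:naturality}, and your statement for equal valuation functions then suffices.

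Second, drop the fallback in your last paragraph, because the Garside claim it relies on is false. Conjugate positive braids need not be related by cyclic shifts and positive braid relations. For example, $\sigma_1$ and $\sigma_2=\Delta^{-1}\sigma_1\Delta$ in $B_3^+$ are conjugate, but the one-letter word $\sigma_1$ admits no nontrivial cyclic shift or braid move. This is precisely why the trace argument has to be run with invertible objects, as you and the paper do.
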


This provides an instance of the general principle (discussed below) that geometric
objects associated with formal loops are controlled by their valuation
data.

\subsection{Motivation}

Our motivation comes primarily from the study of irregular connections,
affine Springer fibers, and geometric representation theory. Braids
arise naturally throughout these subjects. For irregular connections,
they encode the combinatorics of
Stokes filtrations \cite{Boalch1,BoalchPoisson,BoalchYamakawa,BDR0}.
This leads to moduli spaces described by braid varieties, which may be
viewed as characteristic-zero analogues of Deligne--Lusztig varieties,
with monodromy around the puncture replacing the action of Frobenius.

Braid varieties also provide a connection with knot theory. In type
$A$, their cohomology is closely related to the
Khovanov--Rozansky homology of braid closures. Of particular interest
are algebraic links, namely the links of plane curve singularities.
Trinh's theory of algebraic braids gives a braid-theoretic description
of this class of links \cite{Trinh}.

A common theme in these settings is that geometric objects associated
with formal loops often depend only on valuation data. Wild character
varieties fit into locally trivial families when the relevant pole-order
data are preserved \cite{BDR}, while a conjecture of Goresky, Kottwitz,
and MacPherson predicts that the cohomology of an affine Springer fiber
depends only on its root valuation data. A proof of this conjecture for $\GL_d$ has recently been announced by
Chen \cite{ChenRootValuation}.

The starting point of this paper is the observation that root valuation
functions have a natural tropical interpretation: in the split case,
they are precisely the integral points in the tropicalization of the
root hyperplane arrangement complement. Tropical geometry therefore
provides a natural framework for organizing these valuation data. Our
results show that it also controls the algebraic loops associated with
formal loops in hyperplane arrangement complements and naturally
recovers structures such as fission sequences and fission trees.

\subsection{Future directions and relation to root valuation data}

In this paper, we restrict our attention to the split, or pure, case.
In future work, we plan to study finite quotients $X=Y/\Gamma$, where
$Y$ is very affine and $\Gamma$ acts freely on $Y$. A formal loop in
$X$ need not lift to a formal loop in $Y$ over the original ground
field. After passing to Puiseux series, however, such a lift exists and
determines two pieces of information: a point of $\Trop(Y)$ and a
finite monodromy element in $\Gamma$. Informally, these record the
valuation and twisting data of the loop. They may be packaged naturally
as an object of the inertia stack
\[
I([\Trop(Y)/\Gamma])
\]
in a suitable category of cone stacks \cite{CCUW}. When $\Gamma=W$ is a Weyl group and $Y$ is the corresponding
hyperplane arrangement complement, this combined tropical and monodromy
datum recovers the root valuation data of Goresky, Kottwitz and
MacPherson \cite{GKM}. This perspective gives a tropical interpretation
of their non-emptiness conditions and suggests a relationship with
twisted admissible deformation spaces \cite{DRY}. We expect, more
generally, that algebraic loops in $Y/\Gamma$ depend up to homotopy only
on this combined valuation and monodromy datum. We will develop this
picture in future work.

\subsection{Structure of the paper}

In Section \ref{s:algloop} we construct the algebraic loop map and explain its relation to the work of Achinger--Talpo \cite{AT}. We also construct a version of the algebraic loop map in families and deduce from it the key constancy principle for connected algebraic families.

In Section \ref{s:veryaffine} we recall the necessary background on tropical geometry and Gröbner degenerations. We introduce valuation strata, equip them with natural ind-scheme structures, and prove structural results for loop spaces of very affine varieties. In particular, for schön very affine varieties we compute the connected components of the loop space in terms of initial varieties.

Section \ref{s:alg-loops-very-affine} contains the tropical classification theorem for algebraic loops in schön very affine varieties, Theorem \ref{thm:trop-classification}. We also give an explicit representative in the central fiber of a Gröbner degeneration and show how it computes the algebraic loop after transport to a nearby generic fiber. We conclude the section with a brief discussion of the relation to tropical compactifications, and give another way to compute the loop in terms of rays of a cone (for the polyhedral structure coming from an SNC compactification) associated to the valuation vector of a formal loop. 

In Section \ref{s:hyparr} we specialize to essential central hyperplane arrangements. We describe valuation strata and initial varieties in terms of graded arrangement complements, recall intersection subgroups, and introduce full twists and relative full twists. This leads to an explicit formula for the algebraic loop attached to a formal loop as a product of relative full twists associated with a nested sequence of intersection subgroups.

In the final section, Section \ref{s:cohom-braid-var}, we apply these results to braid varieties. After recalling the categorical input behind the trace property of the character functor coming from the horocycle transform, we prove that for a root hyperplane arrangement, and also for its quotient by the Weyl group, the resulting algebraic loop depends only on the root valuation datum of the formal loop. We then deduce, by an abstract trace argument, that the cohomology of the corresponding braid varieties depends only on root valuation data.

Appendix \ref{app:local-triviality} contains a differential-topological local triviality result for submersions near a compact subset of a fiber. The result is likely standard, but we were unable to find a reference in the precise form needed here.

\subsection{Notation}
Throughout the paper, we denote by $F=\CC(\!(t)\!)$ the field of formal Laurent series, and by $\cO = \CC[[t]]$ its ring of integers. The formal disk is $\cD = \Spec \cO$ with generic point $\eta$ and geometric generic point $\ov{\eta}$, and the formal punctured disk is $\cD^{\times} = \Spec F$. We fix a topological generator $\zeta$ of $\Gal(\ov{F} / F) = \pi_{1}^{\et}(\cD^{\times}, \ov{\eta})$.

\subsection*{Acknowledgements} KJ thanks Nathan Tiggemann and Andreas Gross for answering questions about tropical geometry and hyperplane arrangements, and for pointing to useful references. We thank Minh-Tam Trinh for helpful discussions regarding algebraic braids. 

MK and IL were supported by an Australian Research Council Discovery Grant. KJ acknowledges support by the Deutsche Forschungsgemeinschaft (DFG, German Research Foundation) Individual Research Grant 566801746 and (through Timo Richarz) by the European Research Council (ERC) under Horizon Europe (grant agreement no 101040935), by the Deutsche Forschungsgemeinschaft (DFG, German Research Foundation) TRR 326 \textit{Geometry and Arithmetic of Uniformized Structures}, project number 444845124 and the LOEWE professorship in Algebra, project number LOEWE/4b//519/05/01.002(0004)/87.

\subsection*{AI Disclosure}
The mathematical results and proofs presented in this paper were
obtained independently by the authors during 2024--2025, without the use
of generative artificial intelligence. At a later stage of preparing
the manuscript in 2026, we used ChatGPT (OpenAI), using the GPT-5.6 model, as an editorial and research
aid. Its use included suggesting improvements to the organization,
wording, and clarity of the exposition; helping to refine the
presentation of some arguments; identifying possible ambiguities; and
assisting with literature searches (in particular flagging the key references \cite{AT} and \cite{KU}). All mathematical statements,
proofs, and bibliographic references were independently checked by the
authors, and suggestions produced by ChatGPT were incorporated only
after such verification. The authors take full responsibility for the
content, accuracy, and presentation of the paper.

\section{Algebraic loops}\label{s:algloop}

In this section, we construct the algebraic loop associated with a
formal loop in a smooth complex variety. We begin by recalling the
notions of topological, étale, and formal loops and the natural maps
relating them. Our main result associates with every formal loop
$f:\cD^\times\to Y$ a canonical free homotopy class
$\gamma_f$ in $Y^{\an}$ whose image in the étale fundamental group is
the étale monodromy of $f$. The construction proceeds by extending $f$
to an arc in a smooth compactification of $Y$, approximating this arc
by a convergent one, and restricting the resulting analytic map to a
small circle. A local meridian formula expresses $\gamma_f$ in terms
of the contact orders with the boundary and shows that it is independent
of all choices. We conclude with examples illustrating the resulting
algebraic loop map
\[
\cL_{\alg}:LY(\CC) \longrightarrow\pi_1(Y^{\an})/\!\sim.
\]

\subsection{Recollections on loops} We recall the definitions of three different kinds of loops: topological, étale, and formal, and discuss the relationships between them.

\subsubsection{Topological loops} Let $X$ be a path-connected topological space. By a \emph{topological loop} in $X$ we mean a free homotopy class of continuous maps $S^1 \to X$. We denote by $[S^1, X]$ the set of topological loops in $X$. Then we have a canonical bijection 
\[ 
[S^1, X ] \cong \pi_{1}(X, x) / \sim 
\]
For any two base-points $x$ and $x'$ in $X$, the sets of conjugacy classes $\pi_{1}(X,x) / \sim$ and $\pi_{1}(X,x') / \sim$ are canonically identified. We therefore simply write $\pi_{1}(X)/ \sim$.

\subsubsection{\'Etale loops} 
Let $X$ be a connected scheme and $x$ a geometric point. By an \emph{étale loop} in $X$ we mean a conjugacy class in $\pi_{1}^{\et}(X, x)$. As above, we can drop the base point and denote the set of étale loops by $\pi_1^{\et}(X)/\sim$.

\subsubsection{Topological vs. \'etale loops} 
Now, suppose $X$ is a connected complex variety with analytification $X^\an$. Then we have a canonical map from topological loops to \'etale loops:
\[ 
\varphi: \pi_{1}(X^{\an})/\sim \,\, \to\,\, \widehat{\pi_{1}(X^{\an})}/\sim \,\, \cong \,\, \pi_{1}^{\et}(X)/\sim, 
\]
The injectivity of this map for some cases of interest, e.g. the braid group $B_n$, for  $n\geq4$, is an open problem. This is the question of conjugacy separability.

\subsubsection{Formal loops} Let $X$ be a connected scheme. 
By a \emph{formal loop} in $X$ over a $\CC$-algebra $R$ we mean an $R(\!(t)\!)$-point of $X$. We denote the resulting functor by
\[
LX:\CC\textup{-}\mathrm{Alg}\longrightarrow \mathrm{Set},
\qquad
R\longmapsto X(R(\!(t)\!)).
\]
In particular,
\[
LX(\CC)=X(\CC(\!(t)\!))=X(F)
\]
is the set of formal loops in $X$. If $X$ is affine, then $LX$ has a natural ind-scheme structure.

\subsubsection{Formal vs. \'etale loops} We have a canonical map from formal loops to \'etale loops
\[
LX(\CC) \ra \pi_1^{\et}(X)/\sim,\qquad f\mapsto \ell_f
\]
defined as follows. 
By functoriality, we have a homomorphism  $f_* : \pi_{1}^{\et}(\cD^{\times}, \ov{\eta}) \to \pi_{1}^{\et}(X, f(\bar\eta))$. 
The image of $f_*(\z)$ is well-defined up to conjugacy and gives the étale loop $\ell_f$. 
This construction appeared before in the thesis of Trinh \cite{Trinh}.

\subsection{Main result} In the previous subsection, we discussed that formal and topological loops map into \'etale loops. Our main result establishes a direct relationship between formal loops and topological loops:

\begin{theorem}\label{thm:Lalg}  Let $Y$ be a smooth connected complex variety. 
Then there exists a canonical map
\[ 
\cL_\alg : LY(\CC) \ra   \pi_{1}(Y^{\an})/\sim,\qquad f\mapsto \gamma_f
\]
such that the following diagram commutes
\[
\xymatrix{
LY(\CC) \ar[r]^-{\cL_\alg} \ar[dr]_-{f\mapsto\ell_f}
    & \pi_1(Y^{\an})/\sim \ar[d]^\varphi \\
    & \pi_1^{\et}(Y)/\sim. 
}
\]
\end{theorem}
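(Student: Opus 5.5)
We follow the strategy sketched in the introduction to this section: compactify, extend the formal loop to an arc, approximate it by a convergent arc, and restrict to a small circle.

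\emph{Step 1 (compactification and arc).} By Nagata and Hironaka, choose a smooth projective $\overline{Y}\supset Y$ whose boundary $D=\overline{Y}\setminus Y=\bigcup_i D_i$ is a simple normal crossings divisor. By the valuative criterion of properness, $f:\cD^\times\to Y$ extends uniquely to an arc $\bar f:\cD\to\overline{Y}$. Let $p=\bar f(0)$, and let $D_1,\dots,D_k$ be the components through $p$. In local analytic coordinates $z_1,\dots,z_n$ at $p$ with $D_i=\{z_i=0\}$, the arc is given by power series $z_j(t)\in\CC[[t]]$. Since $f$ lands in $Y$, each $z_i(t)$ with $i\le k$ is nonzero, with contact order $m_i=\val_t z_i(t)\geq 0$.

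\emph{Step 2 (convergent approximation and the meridian formula).} Truncate each $z_j(t)$ at order $N>\max m_i$ to get a polynomial, hence convergent, arc $g$ with $\val_t z_i(g)=m_i$. Then $z_i(g(t))=t^{m_i}u_i(t)$ with $u_i(0)\neq 0$. For $\epsilon$ small, $g$ maps the punctured disc $\{0<|t|\le\epsilon\}$ into a polydisc neighbourhood $U_p\cong\Delta^n$, and into $U_p\cap Y\cong(\Delta^\times)^k\times\Delta^{n-k}$. Define $\gamma_f$ to be the free homotopy class of $g|_{|t|=\epsilon}$ in $Y^{\an}$. Because the $u_i$ are nonvanishing, a homotopy $u_i\rightsquigarrow u_i(0)$ and contraction of the $\Delta^{n-k}$ factor show that this class equals the image of $\prod_i\mu_i^{m_i}$ under $\pi_1(U_p\cap Y)\cong\ZZ^k\to\pi_1(Y^{\an})$, where $\mu_i$ is the local meridian around $D_i$. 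This is the local meridian formula.

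\emph{Step 3 (independence of choices).}
\begin{itemize}
\item Independence of $N$, $\epsilon$ and the approximation follows from the formula in Step 2, since it depends only on $p$ and the $m_i$.
\item Independence of the coordinates holds because the abelian group $\pi_1(U_p\cap Y)$ and its meridians are intrinsic.
\item Independence of $\overline{Y}$ is the main obstacle. I would reduce it by weak factorization (Abramovich--Karu--Matsuki--W{\l}odarczyk) to a single blow-up $\pi:\overline{Y}'\to\overline{Y}$ along a smooth centre $Z$ meeting $D$ normally. Given that, the lifted arc $\bar f'$ lies in a chart of $\overline{Y}'$. On $\pi^{-1}(Y)=Y$, the map $\pi$ is the identity, and the truncations of $\bar f'$ and $\pi\circ\bar f'$ agree to high order. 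Their restrictions to a small circle are therefore homotopic in $Y$: both lie in $U_p\cap Y$ and have the same winding data, by the formula for how contact orders transform under a blow-up. Concretely, the new exceptional contact order is a sum of old ones, and the new meridian maps to the corresponding product of old meridians.
\end{itemize}
Alternatively, one can bypass factorization by working directly with a single $\overline{Y}$ and comparing two circle maps via a straight-line homotopy of truncations inside the local polydisc of the first compactification. The winding numbers around each $D_i$ are intrinsic valuations of the arc and hence agree. This shows the classes agree once both land in $U_p\cap Y$, and the independence of compactification then follows by dominating any two compactifications by a third.

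\emph{Step 4 (compatibility with étale loops).} The analytic loop $g|_{|t|=\epsilon}$ factors through the punctured disc $\Delta^\times_\epsilon$, whose fundamental group has generator $c$. The map $\pi_1(\Delta^\times)\to\pi_1^{\et}$ sends $c$ to the topological generator $\zeta$ of $\Gal(\overline F/F)$, via the comparison $\pi_1^{\et}(\Delta^\times)\cong\widehat{\ZZ}\cong\Gal(\overline F/F)$ given by Kummer covers $t^{1/n}$. It remains to check that $g$ and $f$ induce the same étale monodromy. The étale monodromy of $f$ is determined by pulling back finite étale covers $Y'\to Y$. Such a cover extends to a cover of $U_p$ tamely ramified along $D$ (Abhyankar's lemma), where it is given by Kummer extensions in the $z_i$. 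Its pullback along $f$ or along $g$ depends only on the $m_i$ and the leading coefficients $u_i(0)$, which agree. Hence $\varphi(\gamma_f)=\ell_f$.
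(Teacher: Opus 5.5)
Your proposal is correct and follows essentially the same route as the paper: compactify with SNC boundary, extend to an arc, pass to a convergent approximation, read off the meridian formula $\prod_i\mu_i^{m_i}$, dominate two compactifications by a third, and match the étale monodromy through local Kummer covers. Truncating in analytic coordinates is a harmless simplification of the paper's Artin approximation step, since only convergence is used.

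The weak-factorization detour is unnecessary, and its aside is reversed. Under a blow-up centred in $D$, the old contact orders are sums of the new ones ($m_i=m_{\widetilde D_i}+m_E$ when the centre lies in $D_i$), not the other way round, so rely on the domination argument alone.
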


\begin{defn} An \emph{algebraic loop} is an element in the image of $\cL_\alg$.  The set of algebraic loops is denoted by $[S^1,Y^{\an}]^{\alg}$. 
\end{defn}

\begin{remark} We can think of $\cL_\alg$ as recording the topological monodromy of a
formal loop. Understanding the fibers of this map (e.g. their motivic geometry) is an interesting open problem. 
If $Y$ is affine, then $\cL_\alg$ should induce a natural surjection
\[
\pi_0(LY)\twoheadrightarrow [S^1,Y^\an]^\alg,
\]
where $\pi_0$ denotes the set of connected components of the
ind-scheme. In the next section, we prove that this map is a bijection when
$Y$ is very affine and describe its image using tropical geometry.
\end{remark}

\subsubsection{Outline of the proof} 
The proof of the theorem is given in the next subsection. 
It proceeds by choosing a smooth compactification $X$ of $Y$ with SNC boundary divisor $D$ and extending $f$ to an arc
$g:\cD\to X$. Let  $x=g(0)$ be the arc center, $D_1,\ldots,D_r$ the
irreducible components of $D$ passing through $x$, and
$m_i$  the contact order of $g$ with $D_i$. Choose a sufficiently small analytic neighborhood
$U\subset X^{\an}$ of $x$ such that
\[
U^\circ:=U\cap Y^{\an}\cong
(\Delta^\times)^r\times\Delta^{n-r}.
\]
Let
$\iota:U^\circ\hookrightarrow Y^{\an}$ denote the inclusion and $\mu_i$ the positive meridian around $\{z_i=0\}$. Then the loop constructed in the theorem is
\begin{equation} \label{eq:meridian} 
\gamma_f=
\left[
\iota_*\bigl(\mu_1^{m_1}\cdots\mu_r^{m_r}\bigr)
\right]
\in\pi_1(Y^{\an})/\sim.
\end{equation} 
Thus $\gamma_f$ is obtained by recording the contact orders of $g$ with
the local boundary components and mapping the resulting loop in $U^\circ$ into
$Y^{\an}$.  Although
this description uses a compactification and local coordinates, the proof below shows that the resulting conjugacy class is independent of all these choices.

\begin{remark}\label{AT}
The existence of the map in Theorem \ref{thm:Lalg} can also be deduced from the Betti realization functor $\Psi$ constructed in the work of Achinger and Talpo \cite{AT}. Indeed, if $Y$ is smooth and $Y_F=Y\times_{\bC} F$, then
\cite[Theorem 1.1]{AT} gives natural identifications
\[
\Psi(\Spec F)\simeq S^1,
\qquad
\Psi(Y_F)\simeq Y^{\an}\times S^1
\]
over $S^1$. Consequently, a formal loop $f:\Spec F\to Y_F$ induces a section $\Psi(f):S^1\longrightarrow Y^{\an}\times S^1$, and hence a free homotopy class in $Y^{\an}$. One easily checks that the resulting loop agrees with our construction. In fact, their result gives an algebraic loop map for any finite type complex scheme, not necessarily smooth. Moreover, using the approach of \cite{AT}, one can extend the map $\cL_\alg$ to the setting of Artin stacks by smooth descent. 

Their comparison with the étale homotopy type \cite[Theorem 4.1]{AT} also gives the compatibility with the étale loop appearing in Theorem \ref{thm:Lalg}. We nevertheless include our proof because it is framed in more elementary terms, and gives an explicit realization of the resulting loop in terms of boundary meridians. 
\end{remark}

The functorial point of view of \cite{AT} gives the following naturality for the algebraic loop map.

\begin{cor}\label{cor:naturality} Let $f : Y \to Z$ be a morphism of finite type smooth $\CC$-schemes. Then the following diagram commutes
\[
\xymatrix{ 
LY(\CC) \ar[r]^{\cL_{\alg,Y}} \ar[d]^{f} & [S^1,Y^{\an}] \ar[d]^{f_*} \\
LZ(\CC) \ar[r]^{\cL_{\alg,Z}} & [S^1,Z^{\an} ],
}
\]
where $f_*([\g]) = [ f \circ \g]$.
\end{cor}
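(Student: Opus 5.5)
The quickest route is through the Achinger--Talpo description in Remark~\ref{AT}. There, $\gamma_f$ is the free homotopy class of the composite
\[
S^1\xrightarrow{\Psi(f)} Y^{\an}\times S^1\xrightarrow{\pr} Y^{\an}.
\]
Functoriality of the Betti realization $\Psi$ of \cite{AT} gives a commutative square of spaces over $\Psi(\Spec F)\simeq S^1$:
\[
\Psi(Y_F)\to\Psi(Z_F),
\]
which under the identifications $\Psi(Y_F)\simeq Y^{\an}\times S^1$ and $\Psi(Z_F)\simeq Z^{\an}\times S^1$ becomes $f^{\an}\times\mathrm{id}$. The naturality of the identifications in \cite[Theorem 1.1]{AT} with respect to morphisms of $\CC$-schemes base changed to $F$ is exactly what justifies this. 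For $h\in LY(\CC)$ we also have $\Psi(f_F\circ h)=\Psi(f_F)\circ\Psi(h)$. Projecting to $Z^{\an}$ then gives $\gamma_{f\circ h}=f_*\gamma_h$, as free homotopy classes.

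Since the paper defines $\cL_\alg$ via meridians, I would also give a direct argument that uses only the construction in the proof of Theorem~\ref{thm:Lalg}.

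First I choose SNC compactifications $Y\subset X$ and $Z\subset X'$ such that $f$ extends to a morphism $\bar f:X\to X'$. To get this, start from any compactification $X'$ of $Z$, take the closure of the graph of $f$ in (a compactification of $Y$)$\times X'$, and resolve it so that the boundary becomes SNC. Resolution is an isomorphism over $Y$, so this is allowed. By the independence of choices established in Theorem~\ref{thm:Lalg}, I may use these particular compactifications to compute both loops.

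Now let $h\in LY(\CC)$ and extend it to an arc $g:\cD\to X$. Then $\bar f\circ g$ is an arc in $X'$ extending $f\circ h$. The approximation step in the construction picks a convergent arc $g^{\an}$ agreeing with $g$ to high order, so that it has the same center and contact orders. Then $\bar f\circ g^{\an}$ is convergent and agrees with $\bar f\circ g$ to the same order. Its contact orders with the boundary components of $X'$ through $\bar f(x)$ are therefore those of $\bar f\circ g$.

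So $\gamma_{f\circ h}$ is represented by the restriction of $\bar f\circ g^{\an}$ to a small circle $|t|=\epsilon$, whereas $f_*\gamma_h$ is represented by $f\circ(g^{\an}|_{|t|=\epsilon})$. These are literally the same loop. The only remaining point is that one radius $\epsilon$ must work for both constructions. This holds because each construction is independent of $\epsilon$ once $\epsilon$ is small enough: the punctured disc maps into the respective $U^\circ$, and varying $\epsilon$ gives a free homotopy.

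The main obstacle is precisely this compatibility of the choices: the analytic approximation of $g$ has to serve simultaneously as the approximation needed for $\bar f\circ g$. The required order of agreement in $X'$ depends on the contact orders of $\bar f\circ g$. These are finite because $f\circ h$ is a loop in $Z$ and so is not contained in the boundary. I would handle this by choosing the approximation order for $g$ at least as large as both the order needed in $X$ and the order needed in $X'$. Both bounds are controlled by the local meridian formula \eqref{eq:meridian}, so such a choice is possible.
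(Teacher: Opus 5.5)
Your proposal is correct. Your first argument is essentially the paper's: the paper gives no separate proof and simply invokes the functoriality of the Achinger--Talpo Betti realization $\Psi$ together with the natural identification $\Psi(Y_F)\simeq Y^{\an}\times S^1$, exactly as you do. Like the paper, this route also relies on the assertion in Remark~\ref{AT} that the Achinger--Talpo loop agrees with the meridian construction of Theorem~\ref{thm:Lalg}; that assertion is stated there without proof.

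Your second argument is a genuinely different, elementary route, and it is sound. Resolving the closure of the graph gives SNC compactifications with a morphism $\bar f\colon X\to X'$ extending $f$. Then $\bar f\circ\widetilde g$ is again an algebraic, hence convergent, arc that agrees with $\bar f\circ g$ modulo $t^N$. The arc $\bar f\circ g$ is the unique extension of $f\circ h$ to $X'$. So once $N$ exceeds the contact orders in both $X$ and $X'$, $\bar f\circ\widetilde g$ is an admissible approximation for the $Z$-side construction. On a small circle it is literally $f\circ\widetilde g$, and the independence of $\rho$, $N$, the approximation and the compactification proved in Theorem~\ref{thm:Lalg} finishes the argument.

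One small fix: you first say "choose SNC compactifications" and later "start from any compactification $X'$ of $Z$". The compactification $X'$ must be chosen smooth with SNC boundary before you form the graph closure, because the resolution only modifies the $Y$-side.

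The direct route stays entirely within the paper's own definition of $\cL_\alg$ and avoids the unproved comparison with Achinger--Talpo. The Achinger--Talpo route buys brevity and generality: it gives naturality immediately for arbitrary finite type schemes (and, as the paper remarks, for stacks and in families), where SNC compactifications are not available.
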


\subsubsection{Examples} \label{expl:algloops}
We now give some examples of this construction: 

\begin{enumerate} 
\item If $f$ extends to an arc in $Y$, then $\gamma_f$ is trivial. In particular,
$\cL_\alg(Y(\cD))=\{1\}$ 
and if $Y$ is proper, then we also have $\cL_\alg(Y(\cD^\times))=\{1\}$. 
\item If $Y=\mathbb G_m$, then 
$\cL_{\alg}:\bC(\!(t)\!)^\times\longrightarrow\mathbb Z$ is given by   
$f\longmapsto \val_t(f)$. 
\item If $Y=T$ is a torus, then  $\cL_{\alg}:T(\bC(\!(t)\!)) \longrightarrow \pi_1(T)= X_*(T)$ is given by  
\[
f\longmapsto \Big(\chi \mapsto \val_t(\chi(f))\Big),\qquad \chi \in X^*(T). 
\]
\item If $Y=G$ is a connected reductive group with coroot lattice $Q^\vee$, then 
\[
\cL_\alg: G(\cD^\times) \ra \pi_1(G)=X_*(T)/Q^\vee
\]
 is the Kottwitz homomorphism \cite{Kottwitz}.  By (1), this map factors through the affine Grassmannian $L G/ L^+ G$. 
\item Suppose $Y=\bP^1-\{0,1,\infty\}$ and write 
\[
\pi_1(Y)=\langle a, b, c\, : \, abc=1\rangle  \cong F_2. 
\]
Then
\[
\cL_\alg(Y(\cD^\times)) = \{ [a^n], [b^n], [c^n] : n\in \mathbb{Z}_{\geq 0}\}. 
\]

\item If $Y$ is a complement of a finite central hyperplane arrangement in $\bC^n$, then a precise formula for $\cL_\alg$ is given in
\S \ref{s:hyparr}. 

\item Suppose $Y$ is the configuration space of $n$ distinct unordered points
in $\bC$, so that $\pi_1(Y^\an)$ is isomorphic to the classical braid group $B_n$. Then local braid monodromies
of plane curve singularities define algebraic loops in $Y$, and their
closures are algebraic links. We will explore this connection and generalisations to arbitrary finite reflection groups 
in a follow-up paper.

\item The theorem has an orbifold version for smooth Deligne--Mumford
stacks. Let $Y=\cM_{g,n}$ be the moduli stack of genus $g$ curves with $n$ marked points. Then the orbifold fundamental group of $Y$ is the mapping class group $\Gamma_{g,n}$. We expect that algebraic loops in $Y$ are closely related to pseudo-periodic mapping classes \cite{MM}. 

\item Let $Y=\cA_g$ denote the moduli stack of principally polarized abelian varieties of dimension $g$. Then $\pi_1^{\mathrm{orb}}(\cA_g^\an)\cong\Sp_{2g}(\mathbb Z)$. We expect that algebraic loops in $Y$ are quasi-unipotent conjugacy classes in $\Sp_{2g}(\mathbb Z)$ (this should follow from Grothendieck's monodromy theorem). It would be interesting to characterize precisely which such conjugacy
classes arise.
\end{enumerate}

\subsection{Proof of Theorem \ref{thm:Lalg}}  
The construction of $\gamma_f$ and the proof of its properties proceed in
six steps:

\begin{itemize}
\item Choose a smooth proper compactification $Y\hookrightarrow X$ whose
boundary is a simple normal crossing divisor, and extend $f$ uniquely to a
formal arc $g\colon\cD\to X$.

\item Apply Artin approximation to approximate $g$ to sufficiently high order
by an algebraic arc $\widetilde g$. Since algebraic power series are
convergent, $\widetilde g$ determines a holomorphic map
$\widetilde g\colon\Delta_\varepsilon\longrightarrow X^{\an}$. 
for some $\varepsilon>0$.

\item Show that, after shrinking $\varepsilon$,
$\widetilde g(\Delta_\varepsilon^\times)\subset Y^{\an}$. 
Restricting $\widetilde g$ to a sufficiently small circle then gives a
candidate for $\gamma_f$.

\item Establish formula \eqref{eq:meridian}, which expresses the resulting
loop as a product of meridians about the local boundary components, with
exponents given by the contact orders of $g$. This proves independence of
the approximation, the approximation order, and the chosen radius.

\item Use the meridian formula, together with the comparison between
topological meridians and étale inertia, to show that the image of
$\gamma_f$ in $\pi_1^{\et}(Y)/\sim$ is $\ell_f$.

\item Finally, compare any two SNC compactifications through a common
resolution to prove that $\gamma_f$ is independent of the chosen
compactification.
\end{itemize}

\subsubsection{Compactification} By Nagata compactification and Hironaka resolution of singularities, we can find a smooth proper compactification $Y\hookrightarrow X$ with SNC boundary divisor $D$.  
By the valuative criterion for properness, the formal loop
$f\colon \cD^{\times}\to Y$ extends uniquely to an arc
\[
g\colon \cD\to X
\]
 We let  $x:=g(0)$ denote the center of this arc. Next, we consider the contact orders of $g$ with the components of $D$. Write
\[
D=\bigcup_{i=1}^{s}D_i
\]
for the decomposition of the boundary into its irreducible components, and let
$\cI_i\subseteq\cO_X$ be the ideal sheaf of $D_i$. The pullback ideal
$g^{-1}\cI_i\cdot\cO_{\cD}\subseteq\cO_{\cD}=\bC[\![t]\!]$
is nonzero because $g(\cD)\nsubseteq D_i$. (Indeed, $g|_{\cD^\times}=f$.) As
$\bC[\![t]\!]$ is a PID, this ideal has the form
$(x_i(t))$ for some nonzero power series $x_i(t)$, unique up to multiplication
by a unit. The contact order of $g$ with $D_i$ is defined by
\[
m_i:=\ord_t(x_i(t))\in\mathbb Z_{\geq 0}.
\]
Set
\[
N_0:=\max\{0,m_1,\ldots,m_s\}.
\]
Notice that $m_i=0$ whenever $g(0)\notin D_i$. In particular, if $f$ extends
to an arc in $Y$ (e.g. when $Y$ is proper), then $m_i=0$ for every $i$ and hence $N_0=0$.

\subsubsection{Artin approximation} Let $\CC\langle t\rangle\subset\CC[\![t]\!]$ 
be the ring of algebraic power series, i.e. formal power series algebraic over
$\CC(t)$. Fix an integer $N>N_0$. By Artin approximation (\cite[Theorem 1.10]{Artin}), we may choose
$\widetilde g\in X(\Ctalg)$ such that
\[
\widetilde g\bmod t^N=g\bmod t^N.
\]
In particular, $\widetilde g(0)=g(0)=x$. By the convergent Newton--Puiseux theorem (cf. \cite{Wall}, \S 2), every algebraic power series is
convergent. 
Consequently, $\widetilde g$ defines a holomorphic map
\[
\widetilde g\colon\Delta_\varepsilon\longrightarrow X^{\an}
\]
for some $\varepsilon>0$.

\subsubsection{Defining $\gamma_f$} First,  
we claim that, after shrinking $\varepsilon$ if necessary,
$\widetilde g\bigl(\Delta_\varepsilon^\times\bigr)\subset Y^{\an}$. 
After reindexing, suppose that $D_1,\ldots,D_r$ are precisely the
irreducible components of $D$ containing $x$. The SNC condition means that there
exists an analytic neighborhood $U\subset X^{\an}$ of $x$, with holomorphic
coordinates $z_1,\ldots,z_n$, such that
\[
D\cap U=\bigcup_{i=1}^{r} (D_i\cap U)= \bigcup_{i=1}^r\{z_i=0\},
\qquad
U^\circ:=U\cap Y^{\an}\cong(\Delta^\times)^r\times\Delta^{n-r}.
\]
If $x\in Y$, we take $r=0$ and choose $U\subset Y^{\an}$. After shrinking $\varepsilon$, we may assume that
$\widetilde g(\Delta_\varepsilon)\subset U$. For $1\leq i\leq r$, set
\[
a_i(t):=z_i(\widetilde g(t)).
\]
Since $g^*z_i=t^{m_i}u_i(t)$ with $u_i(t)\in\CC[\![t]\!]^\times$, and since
$\widetilde g\equiv g\pmod{t^N}$ with $N>N_0\geq m_i$, we have
$\ord_t(a_i)=m_i$. 
Thus
$a_i(t)=t^{m_i}v_i(t)$
for some convergent power series (i.e. holomorphic function) $v_i$ with $v_i(0)\neq 0$. Since zeros of a non-constant holomorphic function are isolated, we may shrink
$\varepsilon$ once more to ensure that $v_i$ is nonvanishing on
$\Delta_\varepsilon$. Therefore
$\widetilde g(\Delta_\varepsilon^\times)\subset U^\circ$,
establishing the claim. 

For any $0<\rho<\varepsilon$, set 
\[
\gamma_{\widetilde g,\rho}\colon S^1\to U^\circ,
\qquad
e^{\mathrm{i}\theta}\longmapsto
\widetilde g(\rho e^{\mathrm{i}\theta}).
\]
Let
$\iota:U^\circ\hookrightarrow Y^{\an}$ be the inclusion map.
 
 \begin{defn} We define $\gamma_{f}$ to be the free homotopy class of 
$\iota \circ \gamma_{\widetilde g,\rho}: S^1\ra Y$. 
\end{defn}

\subsubsection{Meridian formula} Note that 
\[
\pi_1(U^\circ) \cong  \pi_1((\Delta^\times)^r\times\Delta^{n-r}) \cong \ZZ^r. 
\]
More precisely, 
choose a basepoint $u \in U^\circ$ and write $u=(a_1,...,a_r,b)$ with $a_i\in \Delta^\times$ and $b\in \Delta^{n-r}$. The positive meridian around $\{z_i=0\}$ is defined by 
\[
\mu_i(s)=(a_1,\cdots, a_{i-1}, a_i e^{2\pi i s}, a_{i+1}, \cdots, a_r, b). 
\]
Then $\pi_1(U^\circ, u)$ is the free abelian group generated by $\mu_i$'s. We claim that there is a free homotopy 
\[
\gamma_{\widetilde g,\rho}\sim \mu_1^{m_1}\cdots \mu_{r}^{m_r}. 
\]
Indeed, 
$z_i(\widetilde g(t))=t^{m_i}v_i(t)$, 
where $v_i$ is nonvanishing on $\Delta_\varepsilon$. Thus
$t^{m_i}$ contributes winding number $m_i$, while $v_i$ contributes winding
number zero. This proves the claim and shows  that  
$\gamma_f$  equals the conjugacy class of $\iota_*(\mu_1^{m_1}\cdots \mu_{r}^{m_r})$. 
In particular, $\gamma_{f}$ is independent of $\rho$, of $N$, and of the chosen
approximation $\widetilde g$.

\subsubsection{Compatibility with the \'etale loop}

Let
\[
V:=\Spec\bigl(\cO_{X,x}^{\mathrm{sh}}\bigr),
\qquad
u:V\to X,
\qquad
V^\circ:=Y\times_XV.
\]
We have 
$\pi_1^\et(V^\circ)\cong\widehat{\ZZ}^{\,r}$, 
where the standard finite quotients are given by the simultaneous Kummer
covers $w_i^q=z_i$. Since $\bC[\![t]\!]$ is strictly henselian, the arc $g:\cD\to X$ lifts
uniquely to an arc $g_V:\cD\to V$. Restricting to $\cD^\times$ gives a
factorization
\[
\xymatrix{
\cD^\times \ar[r]^-p \ar[dr]_-f
    & V^\circ \ar[d] \\
    & Y.
}
\]
Since
\[
g^*z_i=t^{m_i}u_i(t),
\qquad
u_i(t)\in\bC[\![t]\!]^\times,
\]
pulling back $w_i^q=z_i$ along $p$ gives
\[
w_i^q=t^{m_i}u_i(t).
\]
The unit $u_i(t)$ has a $q$-th root, so the image of the inertia $\z$ in
$(\ZZ/q)^r$ is $(m_1,\ldots,m_r)$ modulo $q$. Hence
\[
p_*(\z)=(m_1,\ldots,m_r)
\in\widehat{\ZZ}^{\,r}.
\]
Under the comparison isomorphism, the standard generators correspond to the
positive meridians $\mu_1,\ldots,\mu_r$. The meridian formula therefore gives
$\varphi(\gamma_f)=[f_*(\z)]=\ell_f$.

\subsubsection{Independence of compactification} 
 Let $X_1$ and
$X_2$ be two smooth proper compactifications of $Y$ with SNC boundary
divisors. By resolving the closure of the graph of the birational map
$X_1\dashrightarrow X_2$, we may choose a third such compactification
$X_3$ and morphisms
\[
\pi_j\colon X_3\to X_j,\qquad j=1,2,
\]
restricting to the identity on $Y$. Let $g_j\colon\cD\to X_j$ and $g_3\colon\cD\to X_3$ be the corresponding
extensions of $f$. Choose $N$ larger than the relevant contact orders for
all three compactifications, and choose an order-$N$ algebraic approximation
$\widetilde g_3\in X_3(\Ctalg)$
to $g_3$. Then $\pi_j\circ\widetilde g_3$ is an order-$N$ approximation to
$g_j$. Moreover, on the punctured disk the maps $\pi_j$ are the identity on
$Y^{\an}$. Therefore the loop constructed from $\widetilde g_3$ agrees with
the loop constructed from $\pi_j\circ\widetilde g_3$ in $Y^{\an}$. By the independence already proved for a fixed compactification, it follows
that
\[
\gamma_{f,X_1}=\gamma_{f,X_3}=\gamma_{f,X_2}.
\]
Thus, $\gamma_f$ is independent of $X$. 
\qed

\subsection{Algebraic loops in families}
Using the machinery in Achinger--Talpo \cite{AT}, we can construct a version of the algebraic loop map in families for a smooth complex variety $Y$. Let $\cS$ be the $\infty$-category of spaces, and let $\Sch_{\CC}^{\mathrm{lft}}$ be the category of complex schemes which are locally of finite type. For a complex variety $Y$, we consider the loop functor $LY$ as presheaf $(\Sch_{\CC}^{\mathrm{lft}})^{\mathrm{op}} \to \cS$ (which is set-valued), and we consider the presheaf
\[L^{\mathrm{top}}Y :  (\Sch_{\CC}^{\mathrm{lft}})^{\mathrm{op}} \to \cS, \qquad S \mapsto \Map(S^{\an}\times S^1,Y^{\an}), \]
which is the topological loop space for $Y^{\an}$.

\begin{theorem}\label{thm:alg-loop-families}
Let $Y$ be a smooth complex scheme locally of finite type. There is a natural
transformation of presheaves
\[
\cL_{\alg,Y} :LY\longrightarrow L^{\mathrm{top}}Y
\]
on $\Sch_{\CC}^{\mathrm{lft}}$, inducing the algebraic loop map when $S=\Spec \CC$.
\end{theorem}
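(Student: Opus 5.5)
We will build the natural transformation from the Betti realization functor $\Psi$ of Achinger--Talpo \cite{AT}, following Remark~\ref{AT}. For a complex scheme $S$ locally of finite type, an element of $LY(S)$ is a morphism $f\colon \Spec \cO(S)(\!(t)\!)\to Y$ when $S$ is affine, and it glues in general. We regard it as a morphism of log schemes (or of the relevant $F$-type objects) over the punctured formal disc over $S$. The input from \cite{AT} is a natural identification $\Psi(S(\!(t)\!))\simeq S^{\an}\times S^1$ over $S^{\an}$. It is the relative version of $\Psi(\Spec F)\simeq S^1$, obtained by base change from the case $S=\Spec\CC$ together with their base change and Künneth compatibility. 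For smooth $Y$ we also use $\Psi(Y\times S(\!(t)\!))\simeq Y^{\an}\times S^{\an}\times S^1$.

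Given $f\in LY(S)$, apply $\Psi$ to the graph morphism $S(\!(t)\!)\to Y\times S(\!(t)\!)$. This gives a map $S^{\an}\times S^1\to Y^{\an}\times S^{\an}\times S^1$ over $S^{\an}\times S^1$, and projecting to $Y^{\an}$ defines a point of $\Map(S^{\an}\times S^1,Y^{\an})$. Naturality in $S$ comes from the functoriality of $\Psi$ and the naturality of the identifications above in $S$. Because $\cS$ is an $\infty$-category, we will construct the transformation as a composite of natural transformations of presheaves: $LY\to \Map(\Psi(-(\!(t)\!)),\Psi(Y_{-(\!(t)\!)}))$, followed by the identifications. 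Every step is then functorial on the nose up to coherent homotopy, and no choices need to be made by hand.

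It remains to check that for $S=\Spec\CC$ the construction recovers $\cL_\alg$ of Theorem~\ref{thm:Lalg}. We reduce to the local picture near the arc center $x$ in an SNC compactification $X$. By functoriality of $\Psi$, the loop factors through $\Psi$ of the strict henselization punctured neighbourhood $V^\circ$. There $\Psi$ is a torus-like space homotopic to $(S^1)^r$, and the map from $S^1$ is computed from the log structure as multiplication by $(m_1,\dots,m_r)$. This matches the meridian formula \eqref{eq:meridian}.

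We expect the main obstacle to be the relative identification $\Psi(S(\!(t)\!))\simeq S^{\an}\times S^1$ for non-reduced or singular $S$, together with its naturality. The issue is that $\cO(S)(\!(t)\!)$ is not of finite type, so we must check that \cite{AT}'s functor is defined on such objects and commutes with the base change $S'\to S$. We would first try to define everything on the affine lft $S$, where $R(\!(t)\!)$ is of finite type over $\CC(\!(t)\!)\widehat\otimes$-type bases handled in \cite{AT}, and then extend to all of $\Sch^{\mathrm{lft}}_\CC$ by Zariski descent. Descent holds since both presheaves are Zariski sheaves.
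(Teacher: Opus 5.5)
Your overall strategy (apply the Betti realization of \cite{AT} to $f$, project to $Y^{\an}$, and get naturality from functoriality) is the one the paper uses. However, there is a genuine gap exactly at the step you flag as the main obstacle, and your suggested workaround does not close it.

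\textbf{The source of the realization.} The algebraic realization $\Psi$ of \cite{AT} is defined on schemes of finite type over $F$. For $\dim S>0$, neither $\Spec A(\!(t)\!)$ nor $Y\times\Spec A(\!(t)\!)$ is of finite type over $F$; already $\CC[x](\!(t)\!)\cong F\langle x\rangle$ is not a finitely generated $F$-algebra. Passing through the finite type $F$-scheme $S_F=S\times_{\CC}\Spec F$, where the constant-family identification of \cite{AT} applies, does not help either. Since $A\otimes_{\CC}F\subsetneq A(\!(t)\!)$, a loop $f\in Y(A(\!(t)\!))$ is in general not pulled back from a morphism $S_F\to Y$. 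So the identification $\Psi(S(\!(t)\!))\simeq S^{\an}\times S^1$ ``by base change and K\"unneth'' has no source.

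\textbf{How the paper handles it.} Your closing suggestion points in the right direction, but the relevant notion is \emph{topologically} finite type and the relevant functor is the rigid realization. One regards $A(\!(t)\!)\cong F\langle x_1,\dots,x_n\rangle/I$ as an affinoid algebra. Then $f$ induces a morphism from the rigid generic fibre $\mathfrak S_\eta=\Spa(A(\!(t)\!),A[[t]])$ of $\mathfrak S=\Spf A[[t]]$ to $(Y_F)^{\an}_{\rig}$, and one applies $\Psi_{\rig}$.
\begin{itemize}
\item The target is identified with $\Psi(Y_F)\simeq Y^{\an}\times S^1$ via the comparison between $\Psi_{\rig}$ and $\Psi$.
\item The source is identified with $S^{\an}\times S^1$ via the good-reduction property. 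This is available only because $\mathfrak S$ is a smooth formal $\cO$-scheme, which requires $S$ smooth.
\end{itemize}
The graph morphism is unnecessary: $f$ is already an $F$-morphism to $Y_F$.

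\textbf{Singular and non-reduced $S$.} This is why Zariski descent cannot reach singular or non-reduced $S$. The good-reduction identification is not available there, and $\Psi_{\rig}$ is not known to satisfy $h$-descent. The second missing ingredient is the paper's descent step:
\begin{enumerate}
\item Construct the transformation only on smooth affine test schemes.
\item Extend it to all finite type $S$ using smooth $h$-hypercovers (resolution of singularities) together with Lemma~\ref{lem:h-hypersheaf}. That lemma says $L^{\mathrm{top}}Y$ is an $h$-hypersheaf, because $S\mapsto S^{\an}$ satisfies $h$-hyperdescent, $-\times S^1$ preserves colimits, and $\Map(-,Y^{\an})$ turns colimits into limits.
\item The transformation is then the composite $LY\to(LY)^{h}\to L^{\mathrm{top}}Y$ through the $h$-hypersheafification.
\item Only at the end is Zariski descent used, to pass to locally finite type schemes.
\end{enumerate}

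\textbf{The check at $S=\Spec\CC$.} This step has a smaller version of the same definedness problem. The strict henselization $V^\circ$ is not of finite type, so neither analytification nor $\Psi$ applies to it directly. At $S=\Spec\CC$, however, the source $\Spec F$ is unproblematic, and the paper obtains the compatibility from the comparison $\Psi_{\rig}\simeq\Psi$.
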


In other words, for every affine complex scheme
$S=\Spec A$ and every morphism
\[
f:\Spec A((t))\longrightarrow Y,
\]
there is a functorially associated point
\[
\cL_{\alg,Y}(f)\in
\Map(S^{\an}\times S^1,Y^{\an}).
\]
Note that the above natural transformation gives a direct comparison between the algebraic loop space and the topological loop space.  For the proof, we need the following lemma.

\begin{lemma}\label{lem:h-hypersheaf}
Let $Y$ be a complex scheme locally of finite type. The presheaf $L^{\mathrm{top}}Y$ is a hypersheaf for the $h$-topology.
\end{lemma}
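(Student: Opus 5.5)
The plan is to reduce the statement to the fact that the functor $S\mapsto S^{\an}$, composed with the topological-space-to-$\infty$-groupoid functor, satisfies $h$-hyperdescent. The presheaf $L^{\mathrm{top}}Y$ is the composite
\[
S\longmapsto \Map(S^{\an}\times S^1,Y^{\an}) \simeq \Map\bigl(\Pi_\infty(S^{\an}),\, \Map(S^1,Y^{\an})\bigr),
\]
where $\Pi_\infty$ denotes the homotopy type and the mapping spaces are taken in $\cS$. The functor $\Map(-,Z)$ for a fixed space $Z=\Map(S^1,Y^{\an})=\mathcal L Y^{\an}$ sends colimits in $\cS$ to limits. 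It therefore suffices to show that the functor
\[
(\Sch_{\CC}^{\mathrm{lft}})^{\mathrm{op}}\ni S\longmapsto \Pi_\infty(S^{\an})
\]
sends $h$-hypercovers $S_\bullet\to S$ to colimit diagrams, i.e.\ that $|\Pi_\infty(S_\bullet^{\an})|\simeq \Pi_\infty(S^{\an})$.

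First, I will pass from the $h$-topology to a topological statement: analytification sends $h$-covers (generated by proper surjections and Zariski/étale covers) to maps of locally compact Hausdorff spaces that are either open surjective local homeomorphisms or proper surjections. Zariski and étale covers become open covers up to local sections, and proper surjective morphisms become proper surjective continuous maps. Second, I will invoke cohomological descent for such maps: for open covers this is the classical Mayer--Vietoris/Segal statement, and for proper surjections of locally compact spaces it follows from proper base change (Deligne, SGA4 Vbis, or Lurie's proper descent for sheaves of spaces in HTT 7.3). Together these show that sheaves of spaces on $S^{\an}$ satisfy descent along $S^{\an}_\bullet$, i.e.\ $\mathrm{Shv}(S^{\an})\simeq \lim \mathrm{Shv}(S^{\an}_\bullet)$ for hypercovers. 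Third, since complex analytic spaces of finite type are locally contractible and triangulable, the shape of $\mathrm{Shv}(S^{\an})$ agrees with $\Pi_\infty(S^{\an})$ (HA A.4). Descent for $\infty$-topoi then translates into the colimit statement for shapes, via locally constant sheaves: $\Map(\Pi_\infty(T),Z)\simeq$ the space of global sections of the constant sheaf with value $Z$ on $T$.

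Concretely, the cleanest route is the last formulation. I will write $L^{\mathrm{top}}Y(S)\simeq \Gamma\bigl(S^{\an},\underline{Z}\bigr)$, the sections of the constant hypersheaf $\underline{Z}$, for $Z=\mathcal L Y^{\an}$, using local contractibility of $S^{\an}$. Hyperdescent of $S\mapsto\Gamma(S^{\an},\underline Z)$ along $h$-hypercovers then follows from étale descent, which is immediate, combined with proper hyperdescent for sheaves of spaces on locally compact Hausdorff spaces. That last ingredient is exactly what Achinger--Talpo use, and I would quote it from \cite{AT} if it is stated there.

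The main obstacle is the proper case: showing that for a proper surjective hypercover $S_\bullet^{\an}\to S^{\an}$ one has $\Gamma(S^{\an},\underline Z)\simeq\lim\Gamma(S_n^{\an},\underline Z)$ for a non-truncated $Z$. Here hypercompleteness matters, since we are dealing with hypersheaves. I would handle it using finite covering dimension of $S^{\an}$, which makes $\mathrm{Shv}(S^{\an})$ hypercomplete, and then apply proper base change stalkwise, where descent along a proper surjection onto a point reduces to descent along the compact fibers.
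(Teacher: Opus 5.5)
Your reduction is essentially the paper's. The paper quotes \cite[Proposition 2.9]{AT}, applied to the constant family, for the $h$-hypercodescent of $S\mapsto S^{\an}$. It then uses that $-\times S^1$ preserves colimits and that $\Map(-,Y^{\an})$ turns colimits into limits; your currying through $\Map(S^1,Y^{\an})$ is equivalent to this. Quoting \cite{AT} is the right move. Your own sketch of proper hyperdescent would need more care: stalks do not commute with the totalization, so the ``stalkwise'' step does not by itself give a proof.
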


\begin{proof}
Consider the functor
\[
(-)^{\an} :\Sch_{\mathbb C}^{\mathrm{lft}} \longrightarrow \mathcal S
\]
associating to $X$ the space $X^{\an}$. This functor satisfies $h$-hypercodescent, in other words, for every $h$-hypercover
$U_\bullet\to S$, the canonical map
\[
\operatorname*{colim}_{[n]\in\Delta^{\mathrm{op}}}
U_n^{\an}
\longrightarrow
S^{\an}
\]
is an equivalence of spaces (here $\Delta$ denotes the simplex category) -- this is the special case of \cite[Proposition 2.9]{AT} applied to the constant family. Taking the product with $S^1$ preserves colimits, so 
\[
S^{\an}\times S^1
\simeq
\operatorname*{colim}_{[n]\in\Delta^{\mathrm{op}}}
(U_n^{\an}\times S^1).
\]
The functor $\Map(-,X) : \cS^{\mathrm{op}} \to \cS$ preserves limits, hence it takes colimits in the first variable to limits. Therefore
\[
\begin{aligned}
L^{\mathrm{top}}Y(S)
&=
\Map(S^{\an}\times S^1,Y^{\an})\\
&\simeq
\Map\!\left(
\operatorname*{colim}_{[n]}
(U_n^{\an}\times S^1),
Y^{\an}
\right)\\
&\simeq
\operatorname*{lim}_{[n]\in\Delta}
\Map(U_n^{\an}\times S^1,Y^{\an})\\
&=
\operatorname*{lim}_{[n]\in\Delta}
L^{\mathrm{top}}Y(U_n),
\end{aligned}
\]
proving the claim.
\end{proof}

\begin{proof}[Proof of Theorem \ref{thm:alg-loop-families}]
Recall that $F=\mathbb C((t))$ and $\cO=\mathbb C[[t]]$. We first construct $\cL_{\alg,Y}$ on smooth finite type test schemes, and then extend by $h$-descent. So let $S=\Spec A$ be smooth over $\mathbb C$. Consider
the $t$-adic formal scheme
\[
\mathfrak S=\Spf \, A[[t]]
\]
over $\Spf\, \cO$. Its rigid generic fiber is the affinoid rigid
$F$-space
\[
\mathfrak S_\eta=\Spa(A((t)),A[[t]])
\]
Indeed, if  $A=\mathbb C[x_1,\ldots,x_n]/I$, 
then $ A[[t]] \simeq \cO\langle x_1,\ldots,x_n\rangle/I$, and moreover \[
A((t))
\simeq
F\langle x_1,\ldots,x_n\rangle/I.
\]
Since $S$ is smooth, $\mathfrak S$ is a smooth formal
$\cO$-scheme and $\mathfrak S_\eta$ is a smooth,
quasi-compact and separated rigid $F$-space.

The morphism $f$ induces a morphism of rigid spaces
\[
f^{\rig}:
\mathfrak S_\eta
\longrightarrow
(Y_F)^{\an}_{\rig},
\qquad
Y_F=Y\times_{\mathbb C}F.
\]
We can therefore apply the rigid Betti realization functor from \cite[Theorem 1.2]{AT}, to obtain the map
\[
\Psi_{\rig}(f^{\rig}):
\Psi_{\rig}(\mathfrak S_\eta)
\longrightarrow
\Psi_{\rig}\bigl((Y_F)^{\an}_{\rig}\bigr).
\]

The good-reduction property (3) in \cite[Theorem 1.2]{AT} gives a natural equivalence
\[
\Psi_{\rig}(\mathfrak S_\eta)
\simeq
S^{\an}\times S^1.
\]
By \cite[Corollary~5.23]{AT} there is a natural equivalence
\[
\Psi_{\rig}\bigl((Y_F)^{\an}_{\rig}\bigr)
\simeq
\Psi(Y_F)\]
in $\cS_{/S^1}$. Moreover,  \cite[Corollary~2.11]{AT} implies
\[
\Psi(Y_F) \simeq Y^{\an}\times S^1,
\]
and we obtain the map
\[
\Psi_{\rig}(f^{\rig}) : S^{\an}\times S^1
\longrightarrow
Y^{\an}\times S^1.
\]
Taking its first component defines
\[
\cL_{\alg,Y}(f):
S^{\an}\times S^1
\longrightarrow
Y^{\an}.
\]

For naturality, note that a morphism $S'=\Spec A'\to S=\Spec A$ induces a morphism of formal schemes
\[
\Spf A'[[t]]
\longrightarrow
\Spf A[[t]]
\]
and hence a morphism of their rigid generic fibers. Functoriality of
$\Psi_{\rig}$ and naturality of the good-reduction comparison give
base-change compatibility. 

Now note that any finite-type complex scheme $S$ admits a smooth $h$-hypercover by resolution of singularities. The above construction therefore induces a natural transformation of the $h$-hypersheafifications of $LY$ and $L^{\mathrm{top}}Y$. By Lemma \ref{lem:h-hypersheaf}, $L^{\mathrm{top}}Y$ is an $h$-hypersheaf, so we obtain the natural transformation $\cL_{\alg,Y}$ by composing this induced transformation with the natural one from $LY$ to its hypersheafification. Finally, the construction extends to locally finite type schemes by Zariski descent.

When $S=\Spec\mathbb C$, the source is $\Spa(F,\cO)$. The comparison
\[
\Psi_{\rig}\bigl((Y_F)^{\an}_{\rig}\bigr)\simeq\Psi(Y_F)
\]
identifies the resulting section over $S^1$ with the original algebraic loop. Hence $\cL_{\alg,Y}$ induces
$\cL_{\mathrm{alg}}$ on $\mathbb C$-points.
\end{proof}

In the proof we use the rigid analytic Betti realization $\Psi^{\rig}$ of \cite{AT}. It is not known whether it satisfies $h$-descent. If it does, the above construction can be extended to finite type $\CC$-schemes which are not necessarily smooth.
The construction is compatible with base change. For $S=\Spec\mathbb C$, this construction agrees with the algebraic loop construction of Achinger--Talpo, and hence with the explicit construction of Theorem~\ref{thm:Lalg}.

The construction of the family version of the algebraic loop map has the following consequence for constancy of $\cL_{\alg}$. 

\begin{cor}\label{cor:constancy-components} Let $Y$ be a smooth complex scheme locally of finite type, and let $S$ be a connected complex scheme, locally of finite type. Let $f \in LY(S)$, and for a complex point $s \in S(\CC)$ let $f_s := f \circ s \in LY(\CC)$. Then the map
\[ S(\CC) \to [S^1,Y^{\an}], \qquad s \mapsto \cL_{\alg}(f_s) \] 
is constant.
\end{cor}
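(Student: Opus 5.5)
The plan is to deduce the corollary from the family version of the algebraic loop map, Theorem~\ref{thm:alg-loop-families}, together with the fact that a connected finite-type complex scheme has path-connected analytification.

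\emph{Step 1: the family loop.} Apply Theorem~\ref{thm:alg-loop-families} to $f\in LY(S)$. This produces a continuous map
\[
\Phi:=\cL_{\alg,Y}(f)\colon S^{\an}\times S^1\longrightarrow Y^{\an},
\]
well defined up to homotopy.

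\emph{Step 2: restriction to points.} Fix $s\in S(\CC)$, viewed as a morphism $s\colon\Spec\CC\to S$. By naturality of the transformation $\cL_{\alg,Y}$, the pullback of $\Phi$ along $s^{\an}\times\mathrm{id}\colon \{\pt\}\times S^1\to S^{\an}\times S^1$ is homotopic to $\cL_{\alg,Y}(f\circ s)$. By the last assertion of Theorem~\ref{thm:alg-loop-families}, this is a representative of $\cL_{\alg}(f_s)$. Hence the free homotopy class of $\Phi|_{\{s\}\times S^1}$ is exactly $\cL_{\alg}(f_s)$.

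\emph{Step 3: moving the point.} Since $S$ is connected and locally of finite type over $\CC$, the space $S^{\an}$ is connected. It is also locally path-connected, because it is locally triangulable, so it is path-connected. Given $s,s'\in S(\CC)$, choose a path $p\colon[0,1]\to S^{\an}$ from $s$ to $s'$. Then
\[
(\tau,\theta)\mapsto \Phi(p(\tau),\theta)
\]
is a free homotopy from $\Phi|_{\{s\}\times S^1}$ to $\Phi|_{\{s'\}\times S^1}$. Therefore $\cL_{\alg}(f_s)=\cL_{\alg}(f_{s'})$ in $[S^1,Y^{\an}]$.

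\emph{Main obstacle.} The delicate point is Step 2. It requires interpreting the presheaf morphism in the $\infty$-category of spaces so that evaluating at a point is compatible with restricting the map $\Phi$. This amounts to checking that a path component in $\Map(S^{\an}\times S^1,Y^{\an})$ maps to the corresponding component in $\Map(S^1,Y^{\an})$ under the functorial pullback, which is simply what naturality says at the level of $\pi_0$.

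If $S$ is not quasi-compact, connectedness still gives a chain of affine opens joining any two points, with consecutive opens overlapping. The argument then applies on each affine open and propagates the constant value along the chain. Alternatively, one can use the Zariski-descent extension already built into Theorem~\ref{thm:alg-loop-families}.
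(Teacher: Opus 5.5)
Your proposal is correct and is essentially the paper's argument. The paper likewise takes $\cL_{\alg,Y}(f)\colon S^{\an}\times S^1\to Y^{\an}$, curries it to a map $S^{\an}\to\Map(S^1,Y^{\an})$, and uses connectedness of $S^{\an}$ to see that the image lies in a single component, which is an element of $\pi_0(\Map(S^1,Y^{\an}))=[S^1,Y^{\an}]$. Your version differs only in making explicit two points the paper leaves implicit: that restricting along $s^{\an}\times\mathrm{id}$ recovers $\cL_{\alg}(f_s)$ by naturality, and that $S^{\an}$ is path-connected, not merely connected.
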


\begin{proof}
The $S$-point $f \in LY(S)$ is sent to 
\[\cL_{\alg,Y}(f) \in \Map(S^{\an} \times S^1, Y^{\an}) \cong \Map(S^{\an}, \Map(S^1, Y^{\an}), \]
so we get 
\[
\cL_{\alg,Y}(f) : S^{\an}\longrightarrow \Map(S^1,Y^{\an}).
\]
Since $S$ is connected, its analytification $S^{\an}$ is connected.
Consequently, the image of this map is contained in a single connected
component of $\Map(S^1,Y^{\an})$. As
\[
\pi_0(\Map(S^1,Y^{\an}))=[S^1,Y^{\an}],
\]
the induced map $S(\mathbb C)\longrightarrow [S^1,Y^{\an}]$ is constant.
\end{proof}
In general, even when $Y$ is affine, it is not clear that any two points in the same connected component of $LY$ can be joined by a finite chain of connected schemes locally of finite type. However, we will later specialize to very affine varieties which satisfy a standard smoothness assumption (schön), and see that in this case, the algebraic loop map is constant on connected components.

\section{Geometry of tropical fibers}\label{s:veryaffine}
This section develops the tropical framework used to study formal loops in very affine varieties. We first recall the intrinsic torus of a very affine variety, the valuation point associated with a formal loop, and the resulting tropicalization. We then review the Gröbner degeneration $\cY_w\to\AA^1$ attached to an integral tropical point $w$, whose special fiber is the initial variety $\init_w(Y)$. Using these degenerations, we decompose the formal loop space $LY$ into valuation strata $L_wY$ and identify each stratum with the infinite Greenberg scheme of the corresponding formal Gröbner degeneration. Reduction modulo $t$ defines an initial-term morphism
\[
L_wY\longrightarrow\init_w(Y),
\]
which records the leading term of a formal loop. When $Y$ is schön, these morphisms determine the connected components of $LY$, yielding a natural identification between $\pi_0(LY)$ and the enhanced integral tropicalization of $Y$.

\subsection{Very affine varieties and tropicalization} In this subsection, we recall how the intrinsic torus of a very affine variety allows us to assign tropical valuation points to formal loops. 

\begin{defn} A complex variety $Y$ is called \emph{very affine} if it is affine and its coordinate ring $\cO(Y)$ is generated by units.
\end{defn}
Any very affine variety can be embedded into a complex torus $T$. We briefly recall how this embedding is constructed. By a theorem of Samuel, the quotient group $\cO(Y)^{\times} / \CC^{\times}$ is a finitely generated free abelian group. The associated complex torus $T$ with complex points
\[ T(\CC) = \Hom(\cO(Y)^{\times}/\CC^{\times}, \CC^{\times}) \]
is called \emph{intrinsic torus} of $Y$. 
Choosing a basis of $\cO(Y)^{\times}/\CC^{\times}$ and representatives
$u_1,\ldots,u_n\in\cO(Y)^\times$ of its elements determines a closed
embedding
\[
Y\hookrightarrow (\Gm)^n,\qquad
y\longmapsto (u_1(y),\ldots,u_n(y)).
\]
Conversely, any closed subvariety of a complex torus $(\Gm)^n$ is automatically very affine. Its intrinsic torus may differ from $(\Gm)^n$.

Now, let $f :\cD^{\times} \to Y$ be a formal loop in $Y$, with associated pullback map $f^* :  \cO(Y) \to F$. Then $f$ determines a valuation function
\begin{align*} \val_{f} : \cO(Y)^{\times} / \CC^{\times} &\to \ZZ  \\
u \mapsto \val_{t}(f^*u). 
\end{align*}
Since the intrinsic torus $T$ of $Y$ has $\CC$-points $\Hom(\cO(Y)^{\times}/\CC^{\times}, \CC^{\times})$, the function $\val_{f}$ is the same as a cocharacter of $T$, i.e.
\[ \val_{f} \in \Hom(\cO(Y)^{\times}/\CC^{\times}, \ZZ) = X_*(T). \]

Let $\ov{F}$ be the field of Puiseux series. Then more generally we may consider the map
\begin{align*}
\val: Y(\ov{F}) &\to X_{*}(T)_{\QQ} = X_*(T) \otimes_{\ZZ} \QQ \\
y &\mapsto \val_{y}.
\end{align*}
The closure in $X_*(T)_{\RR}$ (endowed with the Euclidean topology) of the image of this map is denoted by $\Trop(Y) \subset X_*(T)_{\RR}$, and we call it the \emph{tropicalization} of $Y$. Choosing a $\ZZ$-basis of $X_*(T)$ determines an isomorphism $X_*(T)_{\RR} \cong \RR^n$, and the image of $\Trop(Y)$ in $\RR^n$ is a tropical variety, see \cite[Theorem 3.2.3]{MS} for details. Moreover, we call $\Trop(Y)(\ZZ) = \Trop(Y) \cap X_*(T)$ the \emph{integral points} of the tropicalization. Every formal loop $f : \cD^{\times} \to Y$ canonically determines an integral point $w_f\in\Trop(Y)\cap X_*(T)=\Trop(Y)(\ZZ)$.

\begin{exam}\label{ex:hyparr} Let $V$ be a complex vector space and $\cA$ a finite essential linear hyperplane arrangement in $V$. Here, essential means that the intersection of all hyperplanes is trivial. Then the hyperplane complement $M=M(V,\cA):=
V\setminus\bigcup_{H\in\cA}H$ is very affine. Indeed, writing 
\[\cA = \{H_{1},...,H_{n} \} \]
for hyperplanes $H_i$ we may choose linear functionals $\ell_{i} \in V^*$ such that $H_{i} = \mathrm{ker}(\ell_{i})$. Then we obtain a closed  embedding 
\begin{align*} 
 M 	&\to (\Gm)^n \\
 x 	&\mapsto (\ell_{1}(x),...,\ell_{n}(x)).
 \end{align*}
The torus $\Gm^{n}$ is identified with the intrinsic torus of $M$ in this case, because $(\ell_{1},...,\ell_{n})$ is a $\ZZ$-basis of $\cO(M)^{\times}/\CC^{\times}$. Under this identification, the tropical point associated with a formal
loop $f\in M(F)$ is
\[
w_f=
\bigl(
\val_t(\ell_1(f)),\ldots,\val_t(\ell_n(f))
\bigr)\in\Trop(M)(\ZZ).
\]
\end{exam}

\subsection{Gröbner degenerations and initial varieties}  In this subsection, we recall the Gröbner degeneration associated with a cocharacter
$w$, which degenerates $Y$ to the variety determined by its
$w$-initial terms.

Let $Y$ be a very affine variety with intrinsic torus $T$, and choose an embedding $Y\hookrightarrow T$. Set
$M=X^*(T)$,  $N=X_*(T)$, 
and denote the canonical pairing by
\[
\langle-,-\rangle:M\times N\longrightarrow\ZZ.
\]
Let $I\subset\CC[M]=\cO(T)$ be the ideal defining $Y$ in $T$. To any cocharacter $w\in N=X_*(T)$ one associates a flat morphism $\mathcal Y_w\to \AA^1$, called the \emph{Gröbner degeneration} of $Y$ with respect to $w$, whose restriction over $\Gm$ is isomorphic to $Y\times\Gm$. Its fiber over $0$ is the \emph{initial variety}  $\init_w(Y)$. The special fiber is nonempty—and hence $\mathcal Y_w\to\AA^1$ is surjective—if and only if $w\in\Trop(Y)(\ZZ)$.

The initial variety is defined as follows. For any function 
\[ f = \sum_{m \in M} c_m \chi^{m} \in \CC[M] \]
let $a_{w}(f) = \min\{ \langle m,w \rangle \in \ZZ \mid c_m \neq 0 \}$. The $w$-initial form of $f$ is 
\[ \init_{w}(f) = \sum_{\langle m,w\rangle = a_{w}(f)} c_m\chi^m. \]
In other words, $\init_{w}(f)$ consists only of those monomials in $f$ with minimal $w$-weight. Then, the initial ideal of $I$ is the ideal generated by initial forms, i.e.
\[ \init_{w}(I) = \langle \init_{w}(f) \mid f \in I \rangle, \]
and the initial variety $\init_{w}(Y)$ is the subvariety of $T$ cut out by $\init_{w}(I)$. The point $w$ lies in $\Trop(Y)$ precisely when $\init_w(I)$ contains no monomial. Equivalently, for every $f\in I$, the minimum
\[
a_w(f)=\min\{\langle m,w\rangle:c_m\neq 0\}
\]
is achieved for at least two exponents $m$. Thus, if $w\notin\Trop(Y)$, there exists $f\in I$ such that $\init_w(f)$ is a monomial. Since every monomial is a unit in the Laurent polynomial ring $\CC[M]$, this implies $\init_w(I)=\CC[M]$, and hence $\init_w(Y)=\varnothing$.

The Gröbner degeneration $\cY_w$ is constructed as follows, cf. \cite[\S 15.8]{Eis}. Let $R=\CC[t]$, and consider the algebra 
\[R[M]^{w} = R[t^{-\langle m,w \rangle}\chi^{m} \mid m\in M] \subset R[t^{-1}][M]. \]
We then define
\[\cY_{w} := \Spec( R[M]^{w}  / I_{\CC(t)} \cap R[M]^{w} ), \]
which has a natural map $\cY_{w} \to \AA^1$ induced from the inclusion of $R$ into its coordinate ring. To see that this map is flat, one shows that $\CC[\cY_{w}]$ is torsion-free over $R$ (which suffices because $R$ is a PID). The special fiber is identified with the initial variety
\[ \cY_{w,0} = \init_{w}(Y).\]

This family can alternatively be described as follows. Since $w$ is a cocharacter of $T$, we have a $\Gm$-action on $T$ via $w$, and for every $t\in \Gm$ we get a translate $Y_t = t^{-w}Y$ of $Y$. Let 
\[\cY^{\circ} = \{(t, t^{-w}y) \in \Gm \times T \mid y \in Y \}. \]
Then $\cY_{w}$ is the scheme-theoretic closure of $\cY^{\circ}$ in $\AA^1 \times T$. In this description, it is easy to see that the morphism
\[ Y \times \Gm \to \cY^{\circ},\qquad  (y,t) \mapsto (t,t^{-w}y) \]
is an isomorphism. Thus, the restriction of $\cY_w\to\AA^1$ over $\Gm$ is the trivial family $Y\times\Gm\to\Gm$.

\subsection{The tropical fiber and valuation strata}
\label{ss:valuation-strata}
We continue using the notation introduced above. Thus, $Y$ is a very affine variety with intrinsic torus $T$ and we have a valuation map
\[
\val:Y(F)\longrightarrow
X_*(T)\cap\Trop(Y)=\Trop(Y)(\ZZ).
\]

With the convention of Section~\ref{s:algloop}, the loop space is the functor
\[
LY(R)=Y(R(\!(t)\!)).
\]
Thus, $LY(\CC)=Y(F)$. In this subsection, we study the decomposition of the loop space $LY$ into valuation strata. For each $w\in\Trop(Y)(\ZZ)$, we equip the set-theoretic fiber $\val^{-1}(w)$ with a natural scheme structure and identify it with the infinite Greenberg scheme associated with the Gröbner degeneration $\cY_w$. This description produces an initial-term morphism  
\[
L_wY\ra \init_w(Y)
\]
to the initial variety $\init_w(Y)$. 
When $Y$ is schön this gives a description of the connected components of $LY$ in terms of the connected components of its initial varieties. To state the precise results, we need some definitions.

\begin{defn} For $w\in\Trop(Y)(\ZZ)$,
the \emph{valuation stratum} $L_wY$ is the subfunctor of $LY$ defined, for every $\CC$-algebra $R$, by
\[
L_wY(R)
=
\left\{
f\in Y(R(\!(t)\!))
\;\middle|\;
t^{-w}f\in\cY_w(R[[t]])
\right\}.
\]
Here, an $R[[t]]$-point of $\cY_w$ is understood to be a morphism
\[
\Spec R[[t]]\longrightarrow\cY_w
\]
over $\AA^1$, where $\Spec R[[t]]\to\AA^1$ is induced by the homomorphism
$\CC[t]\to R[[t]]$ sending $t$ to $t$.
\end{defn}

We shall also use the following enhancement of the set of integral tropical points.

\begin{defn}\label{def:enhanced-tropicalization}
The \emph{enhanced integral tropicalization} of $Y$ is the set
\[
\mathcal T(Y)
:=
\coprod_{w\in\Trop(Y)(\ZZ)}
\pi_0\bigl(\init_w(Y)\bigr).
\]
Thus, an element of $\mathcal T(Y)$ is a pair $(w,[c])$, where
$w\in\Trop(Y)(\ZZ)$ and
\[
[c]\in\pi_0\bigl(\init_w(Y)\bigr)
\]
is a connected component of the corresponding initial variety. There is a natural projection
\[
\mathcal T(Y)\longrightarrow\Trop(Y)(\ZZ),
\qquad
(w,[c])\longmapsto w.
\]
\end{defn}

\begin{defn}
A very affine variety $Y\subset T$ is called \emph{schön} if, for every
$w\in\Trop(Y)$, the initial variety $\init_w(Y)$ is smooth.
\end{defn}

By a result of Helm and Katz \cite[Proposition 3.9]{HK} this is equivalent to the usual notion of a schön variety. If $Y$ is a smooth schön variety, then the Gröbner degeneration $\cY_{w} \to \AA^1$ is a smooth morphism.

\begin{defn}  For an affine $\CC[[t]]$-scheme $X$, its infinite Greenberg functor is defined by
\[
\Gr_\infty(X)(R)
:=
\Hom_{\Spec\CC[[t]]}
\bigl(\Spec R[[t]],X\bigr).
\]
It is represented by an affine $\CC$-scheme, in general not of finite type. 
\end{defn} 

Note that
\[
\Gr_\infty(X)=\varprojlim_n\Gr_n(X),
\]
where
\[
\Gr_n(X)(R)
=
\Hom_{\Spec\CC[[t]]}
\bigl(\Spec R[t]/(t^{n+1}),X\bigr).
\]
When $X$ is of finite type over $\CC[[t]]$, each $\Gr_n(X)$ is an affine scheme of finite type over $\CC$.
We will be interested in the Greenberg functor applied to the Gröbner degeneration, so set 
 \[
\widehat{\cY}_w
:=
\cY_w\times_{\AA^1}\Spec\CC[[t]].
\]

The main result of this subsection is the following.

\begin{theorem}\label{thm:tropical-fiber-main}
Let $Y$ be a very affine variety.
\begin{enumerate}
\item For every $w\in\Trop(Y)(\ZZ)$, we have
\[
L_wY\cong\Gr_\infty(\widehat{\cY}_w).
\]
In particular, $L_wY$ is an affine scheme, in general not of finite type over $\CC$. Moreover, $L_wY(\CC)=\val^{-1}(w)$.

\item For every $w\in\Trop(Y)(\ZZ)$, there is a natural morphism, called the \emph{initial-term morphism},
\[
\init_w:L_wY\longrightarrow\init_w(Y).
\]
If $Y$ and $\init_w(Y)$ are smooth and $d=\dim Y$, then $\init_w$ is a locally trivial fibration with fiber $(\AA^\infty)^d$, where $\AA^\infty=\Spec\CC[x_1,x_2,\ldots]$.

\item For each $w\in X_*(T)$, let $\Gr_T^w$ denote the corresponding connected component of the affine Grassmannian $\Gr_T$. Then there is a decomposition of ind-schemes
\[
LY=
\coprod_{w\in\Trop(Y)(\ZZ)}
\left(LY\times_{\Gr_T}\Gr_T^w\right).
\]
Moreover, for every $w\in\Trop(Y)(\ZZ)$, the natural morphism
$L_wY\to LY\times_{\Gr_T}\Gr_T^w$ induces a homeomorphism on underlying topological spaces. Consequently,
\[
|LY|=\coprod_{w\in\Trop(Y)(\ZZ)}|L_wY|.
\]

\item If $Y$ is schön, the initial-term morphisms induce a natural bijection
\[
\pi_0(LY)\xrightarrow{\sim}\mathcal T(Y).
\]
Equivalently,
\[
\pi_0(LY)
\cong
\coprod_{w\in\Trop(Y)(\ZZ)}
\pi_0\bigl(\init_w(Y)\bigr).
\]
\end{enumerate}
\end{theorem}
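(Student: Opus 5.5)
I will prove the four parts in order, since each builds on the previous one.

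For (1), I will compare the two functors directly on $R$-points. A point $f\in L_wY(R)$ is an element $f\in Y(R(\!(t)\!))$ such that $t^{-w}f$ extends to a morphism $\Spec R[[t]]\to\cY_w$ over $\AA^1$. Since $\cY_w$ is the closure of $\cY^\circ\cong Y\times\Gm$ embedded by $(y,t)\mapsto(t,t^{-w}y)$, such an extension is the same as a homomorphism $R[M]^w/(I_{\CC(t)}\cap R[M]^w)\to R[[t]]$ over $\CC[t]$. That is, it is an $R[[t]]$-point of $\widehat\cY_w$ over $\CC[[t]]$.

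Conversely, an $R[[t]]$-point of $\widehat\cY_w$ restricts to an $R(\!(t)\!)$-point of $\cY_w|_{\Gm}\cong Y\times\Gm$, which recovers $f$. The one thing to check is that the two constructions are mutually inverse. This holds because $R[[t]]\to R(\!(t)\!)$ is injective, so a map out of the coordinate ring is determined by its composite to $R(\!(t)\!)$.

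Representability of $\Gr_\infty(\widehat\cY_w)$ by an affine scheme is standard: it is the inverse limit of the affine finite type schemes $\Gr_n$. For $\CC$-points I will use $R=\CC$. Then $t^{-w}f$ is integral exactly when $\val_t(f^*\chi^m)\ge\langle m,w\rangle$ for all $m$. Since both $\chi^m$ and $\chi^{-m}$ are units, this forces equality, i.e.\ $\val(f)=w$. Conversely, if $\val(f)=w$, then every generator $t^{-\langle m,w\rangle}\chi^m$ pulls back into $\CC[[t]]$.

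For (2), I will define $\init_w$ as reduction mod $t$, namely the map $\Gr_\infty(\widehat\cY_w)\to\Gr_0(\widehat\cY_w)=\widehat\cY_{w,0}=\init_w(Y)$. When $Y$ and $\init_w(Y)$ are smooth, $\cY_w\to\AA^1$ is smooth: it is flat, and its fibres are $Y$ and $\init_w(Y)$. Then I will invoke the standard structure of Greenberg schemes of smooth formal schemes.

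Concretely, locally on $\init_w(Y)$, and hence on $\widehat\cY_w$, I will choose an étale map to $\AA^d_{\CC[[t]]}$. This gives $\Gr_n\to\Gr_{n-1}$ as Zariski-locally trivial $\AA^d$-bundles, by Hensel's lemma and the infinitesimal lifting property. Passing to the limit, with compatible trivializations over a fixed Zariski cover pulled back from $\Gr_0$, gives the $(\AA^\infty)^d$ fibration. The point to be careful about is that the trivializations come from étale coordinates defined over all of $\Gr_0$-local opens. This works because étale coordinates on an affine open of $\widehat\cY_w$ give $\Gr_\infty(U)\cong\Gr_0(U)\times\Gr_\infty(\widehat\AA^d)/\Gr_0$.

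For (3), I will use the map $LY\to LT\to\Gr_T=LT/L^+T$, whose connected components are indexed by $X_*(T)$ via $\pi_0(\Gr_T)=X_*(T)$. The decomposition of $LY$ is then the pullback of this disjoint union. The components with $w\notin\Trop(Y)$ are empty by the $\CC$-points computation in (1): every field-valued point has valuation in $\Trop(Y)(\ZZ)$.

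For the homeomorphism, $L_wY\to LY\times_{\Gr_T}\Gr^w_T$ is a monomorphism and bijective on field-valued points. Bijectivity over arbitrary fields $K$ follows from the same valuation argument. I will then show it is a closed immersion onto something with the same points, e.g.\ a nil-thickening. The reason is that on $\Gr_T^w$ the $R$-points are only $t^{-w}$ times units up to nilpotents, so this comparison is reduced-equivalent.

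For (4), combine (3) and (2). By (3), $\pi_0(LY)=\coprod_w\pi_0(L_wY)$. By (2), $\init_w$ is a surjective locally trivial bundle with connected fibres $(\AA^\infty)^d$, so it induces a bijection on $\pi_0$. Schön gives smoothness of all $\init_w(Y)$, and $Y$ itself is smooth as the generic fibre.

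I expect the main obstacle to be (3). The subtle point is controlling non-reduced $R$ and the ind-structure, to ensure the homeomorphism, and hence that each $|L_wY|$ is open and closed in $|LY|$.
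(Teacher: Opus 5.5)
Your proposal is correct and follows the paper's proof essentially step by step:
\begin{enumerate}
\item the Yoneda comparison with $\Gr_\infty(\widehat{\cY}_w)$, using injectivity of $R[[t]]\to R(\!(t)\!)$;
\item reduction mod $t$ together with the standard structure theorem for Greenberg schemes of smooth formal schemes, which the paper cites from Chambert-Loir--Nicaise--Sebag rather than sketching via \'etale coordinates;
\item pulling back the component decomposition of $\Gr_T$;
\item for the homeomorphism, the fact that $(\Gr_T^w)_{\red}=\{t^w\}$.
\end{enumerate}
The paper makes the last step precise by proving $L_wY=LY\times_{\Gr_T}\{t^w\}$ through fpqc descent of the condition $t^{-w}f\in T(R[[t]])$, so your closed immersion is the base change of the universal homeomorphism $\{t^w\}\to\Gr_T^w$.

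Two small points need tightening. First, emptiness of the factors with $w\notin\Trop(Y)$ needs the valuation argument over every field $K\supseteq\CC$, not only $\CC$; you can use some $g\in I$ whose $w$-initial form is a monomial. Second, in (4) the smoothness of $Y$ should come from $Y=\init_0(Y)$, because deducing it from the smoothness of the family $\cY_w$ would be circular.
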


\begin{remark} \label{r:KU}
The ingredients of this theorem are largely present in the literature, although not in the form stated here. A closely related identification of fibers of tropicalization with Greenberg schemes of translated models appears, on field valued points, in \cite[Proposition 5.16]{KU}; the representability and smooth structure results used in parts \textup{(1)} and \textup{(2)} belong to the general theory of Greenberg schemes developed in \cite{GreenbergI,GreenbergII} and \cite{CNS}. The infinitesimal structure of the affine Grassmannian of a torus underlying part \textup{(3)} is described in \cite[\S 3.a]{PR}. The contribution of the theorem is to assemble these results in a functorial formulation for the intrinsic torus of an arbitrary very affine variety, to distinguish the valuation stratum from the corresponding ind-scheme component, and to deduce the description in part \textup{(4)} of the connected components of the loop space. To the best of our knowledge, this final description has not previously been recorded.
\end{remark} 

Before proving the theorem, we deduce a consequence for the algebraic loop map for schön varieties.

\begin{cor}\label{cor:constant-components} Let $Y$ be a schön very affine variety. Then the algebraic loop map is constant on connected components.
\end{cor}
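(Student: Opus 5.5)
The plan is to combine Theorem~\ref{thm:tropical-fiber-main}(4) with the constancy principle Corollary~\ref{cor:constancy-components}. By part (4), the connected components of $LY$ are in bijection with pairs $(w,[c])$, where $w\in\Trop(Y)(\ZZ)$ and $[c]\in\pi_0(\init_w(Y))$. Concretely, the component attached to $(w,[c])$ should have underlying space $|\init_w^{-1}(c)|\subset |L_wY|$. So it suffices to show the following: whenever two formal loops $f,f'\in Y(F)$ have the same valuation $w$ and initial terms $\init_w(f),\init_w(f')$ in the same component $c$, they satisfy $\gamma_f=\gamma_{f'}$.

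The argument would proceed in three steps.

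\textbf{Step 1.} Since $Y$ is schön, $\init_w(Y)$ is smooth, so $c$ is a connected smooth variety. By part (2), $\init_w\colon L_wY\to\init_w(Y)$ is a locally trivial fibration with fibre $(\AA^\infty)^d$ (here we use that $Y$ is smooth, being the generic fibre of a smooth Gröbner degeneration).

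\textbf{Step 2.} Connect $f$ and $f'$ by a finite chain of families over connected finite type bases. First, join $\init_w(f)$ to $\init_w(f')$ by a chain of irreducible curves $C_1,\dots,C_k\subset c$. Over a Zariski open $U_i\subset C_i$ where $\init_w$ is trivial, a section yields a family $\sigma_i\in L_wY(U_i)\subset LY(U_i)$; the chain of opens covering $c$ handles the overlaps. Second, within a single fibre $\init_w^{-1}(p)\cong(\AA^\infty)^d$, any two points lie on an affine line. Explicitly, if $g,g'\in\widehat{\cY}_w(\CC[[t]])$ are arcs in a trivializing chart, the straight-line interpolation in the infinite affine coordinates gives an $\AA^1$-family, i.e.\ an element of $L_wY(\AA^1)$, via the isomorphism $L_wY\cong\Gr_\infty(\widehat\cY_w)$ of part (1).

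\textbf{Step 3.} Apply Corollary~\ref{cor:constancy-components} to each family in the chain, with $S$ a curve, an open in a curve, or $\AA^1$, all connected and of finite type. This shows $\cL_\alg$ is constant along each link of the chain, hence $\gamma_f=\gamma_{f'}$.

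The main obstacle is Step 2. The issue is that $L_wY$ is not of finite type, and the local triviality of part (2) must be realized by genuine families over finite-type connected bases in $LY(S)=Y(\CC[S](\!(t)\!))$. I would handle this through $\Gr_\infty(\widehat{\cY}_w)=\varprojlim \Gr_n$. Smoothness of $\cY_w$ gives, on an affine open $V\subset c$, étale coordinates on $\widehat{\cY}_w$, and hence an isomorphism $\init_w^{-1}(V)\cong V\times(\AA^\infty)^d$. Any two points in $\init_w^{-1}(V)$ then have only countably many nonzero coordinates. Joining them along $\{\text{path in }V\}\times\{\text{line}\}$ uses a morphism from a finite type base into an infinite product, which is fine: it is just a choice of coefficients in $\CC[S]$ for each $t$-adic coordinate. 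This yields an $S$-point of $\Gr_\infty$, and multiplying by $t^{w}$ gives the required element of $LY(S)$.
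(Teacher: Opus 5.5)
Your proposal is correct and follows essentially the same route as the paper. In both, you use the decomposition of $LY$ and the initial-term fibration to reduce to initial terms lying in a common component of $\init_w(Y)$, join them by a chain of curves in trivializing opens, lift to connected finite-type families in $L_wY\subset LY$, and conclude with Corollary~\ref{cor:constancy-components}; your explicit use of lines in the $(\AA^\infty)^d$ fibres just spells out a step the paper leaves implicit. One small correction: smoothness of $Y$ follows directly from the schön hypothesis because $Y=\init_0(Y)$, not from smoothness of the Gröbner degeneration, since the paper's proof that the degeneration is smooth already assumes $Y$ is smooth.
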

\begin{proof}
Every connected component of $LY$ is contained in a unique
\[
LY\times_{\Gr_T}\Gr_T^w,
\qquad w\in\Trop(Y)(\ZZ).
\]
By Theorem \ref{thm:tropical-fiber-main}\textup{(3)}, the natural morphism
\[
L_wY\longrightarrow LY\times_{\Gr_T}\Gr_T^w
\]
induces a homeomorphism on underlying topological spaces. It also induces a bijection on complex points, since every morphism $\Spec\CC\to\Gr_T^w$ factors through the reduced point $\{t^w\}$. Thus, if $f,g\in LY(\CC)$ lie in the same connected component, then for a unique $w\in\Trop(Y)(\ZZ)$ they determine points $\widetilde f,\widetilde g\in L_wY(\CC)$ lying in the same connected component of $L_wY$.

Since $Y$ is schön, Theorem
\ref{thm:tropical-fiber-main}\textup{(2)} shows that
\[
\init_w:L_wY\longrightarrow\init_w(Y)
\]
is a Zariski-locally trivial fibration with fiber
$(\AA^\infty)^{\dim Y}$. Let
\[
c_f:=\init_w(\widetilde f),
\qquad
c_g:=\init_w(\widetilde g).
\]
Then $c_f$ and $c_g$ lie in the same connected component of $\init_w(Y)$. Since $\init_w(Y)$ is a variety, they can be joined by a chain of connected algebraic curves. After subdividing this chain so that each piece lies in an open set over which $\init_w$ is trivial, we may lift it to a chain of connected schemes in $L_wY$ joining $\widetilde f$ and $\widetilde g$. Composing with
\[
L_wY\longrightarrow LY
\]
gives a chain of connected algebraic families in $LY$ joining $f$ and $g$. The result now follows from Corollary \ref{cor:constancy-components}.
\end{proof}

\subsection{Proof of Theorem \ref{thm:tropical-fiber-main}}
To prove the theorem, we begin by reformulating the valuation condition in terms of arcs in the intrinsic torus $T$.

\begin{lemma}\label{lem:tropalg}
Let $f\in T(F)$ and $w\in X_*(T)$. Then $\val_f=w$ if and only if $t^{-w}f\in T(\CC[[t]])$.
\end{lemma}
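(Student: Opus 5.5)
The plan is to reduce to coordinates and then to the rank-one case, using that $T(F)$ is a group and that $\val_f$ is additive in $f$.

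First choose a $\ZZ$-basis $\chi_1,\ldots,\chi_n$ of $M=X^*(T)$. This identifies $T\cong\Gm^n$, with $\cO(T)=\CC[\chi_1^{\pm1},\ldots,\chi_n^{\pm1}]$. Under this identification:
\begin{itemize}
\item a point $f\in T(F)$ becomes an $n$-tuple $(f_1,\ldots,f_n)$ with $f_i=f^*\chi_i\in F^\times$;
\item the cocharacter $w$ becomes the integer vector $(\langle\chi_1,w\rangle,\ldots,\langle\chi_n,w\rangle)$;
\item the point $t^w\in T(F)$ has coordinates $t^{\langle\chi_i,w\rangle}$.
\end{itemize}
Hence
\[
(t^{-w}f)^*\chi_i=t^{-\langle\chi_i,w\rangle}f_i .
\]

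Next, observe that $T(\CC[[t]])$ consists exactly of the tuples all of whose coordinates lie in $\CC[[t]]^\times$. A morphism $\Spec\CC[[t]]\to\Gm^n$ is the same as $n$ invertible elements of $\CC[[t]]$. Moreover, an element $g\in F^\times$ lies in $\CC[[t]]^\times$ if and only if $\val_t(g)=0$.

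Consequently, $t^{-w}f\in T(\CC[[t]])$ holds if and only if, for every $i$,
\[
\val_t(f_i)=\langle\chi_i,w\rangle .
\]
Since $\val_f$ is a homomorphism $M\to\ZZ$, namely $\chi\mapsto\val_t(f^*\chi)$, it is determined by its values on the basis $\chi_i$. So this condition is exactly $\val_f=w$ as elements of $X_*(T)=\Hom(M,\ZZ)$. This uses the identification of $X_*(T)$ with $\Hom(\cO(Y)^\times/\CC^\times,\ZZ)$ from the previous subsection, which holds because $M=X^*(T)$ is that character group.

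The only point requiring care is the additivity of $\val_f$, namely $\val_t(f^*(\chi\chi'))=\val_t(f^*\chi)+\val_t(f^*\chi')$. This is immediate because $f^*$ is a ring homomorphism and $\val_t$ is a valuation on $F$. There is no real obstacle: the lemma is essentially the statement that $T(F)/T(\CC[[t]])\cong X_*(T)$ via $f\mapsto\val_f$, checked one coordinate at a time.
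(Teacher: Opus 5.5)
Your proof is correct and follows essentially the same argument as the paper. Both compute $\chi(t^{-w}f)=t^{-\langle\chi,w\rangle}\chi(f)$ and use that $t^{-w}f$ is a $\CC[[t]]$-point exactly when these values are units. The only difference is that the paper runs over all characters and invokes perfectness of the pairing $X^*(T)\times X_*(T)\to\ZZ$, whereas you check the condition on a basis of $X^*(T)$; this amounts to the same thing.
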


\begin{proof}
Recall that the valuation of $f$ is the cocharacter $\val_f\in X_*(T)=\Hom(X^*(T),\ZZ)$ characterised by
$\langle\chi,\val_f\rangle=\val_t\bigl(\chi(f)\bigr)$ for every character $\chi\in X^*(T)$. Here, we regard $w$ as the $F$-point $w(t)\in T(F)$ and write $t^{-w}:=w(t)^{-1}$.

For every $\chi\in X^*(T)$, we have
$\chi(t^{-w}f)=t^{-\langle\chi,w\rangle}\chi(f)$. Hence
\[
\val_t\bigl(\chi(t^{-w}f)\bigr)
=
\val_t\bigl(\chi(f)\bigr)-\langle\chi,w\rangle
=
\langle\chi,\val_f-w\rangle.
\]
Now $t^{-w}f\in T(\CC[[t]])$ if and only if
$\chi(t^{-w}f)\in\CC[[t]]^\times$ for every $\chi\in X^*(T)$. This is equivalent to $\langle\chi,\val_f-w\rangle=0$ for every character $\chi\in X^*(T)$. Since the canonical pairing $X^*(T)\times X_*(T)\to\ZZ$ is perfect, this holds if and only if $\val_f=w$.
\end{proof}

The following lemma relates this condition to the Gröbner degeneration.

\begin{lemma}\label{lem:grobner-extension}
Let $Y\subset T$ be a very affine variety, let $w\in\Trop(Y)(\ZZ)$, and let $\cY_w\to\AA^1$ be the Gröbner degeneration of $Y$ with respect to $w$. For every $\CC$-algebra $R$ and every $f\in Y(R(\!(t)\!))$, the following conditions are equivalent.

\begin{enumerate}
\item The renormalised loop $t^{-w}f\in T(R(\!(t)\!))$ extends to an $R[[t]]$-point of the torus.

\item The morphism $\Spec R(\!(t)\!)\to Y\times\Gm\cong\cY_w|_{\Gm}$ induced by $t^{-w}f$ extends to a morphism $\Spec R[[t]]\to\cY_w$ over $\AA^1$.
\end{enumerate}
\end{lemma}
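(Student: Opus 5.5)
The plan is to work directly with coordinate rings, using the description of $\cY_w$ as $\Spec$ of $A_w:=R_0[M]^w/(I_{\CC(t)}\cap R_0[M]^w)$, where $R_0=\CC[t]$ and $R_0[M]^w=\CC[t][t^{-\langle m,w\rangle}\chi^m]$. The key preliminary observation is that $R_0[M]^w$ is itself a Laurent-type ring: the substitution $\chi^m\mapsto t^{-\langle m,w\rangle}\chi^m$ identifies $R_0[M]^w$ with $\CC[t][M]=\cO(\AA^1\times T)$. In other words, $\cY_w$ is a closed subscheme of $\AA^1\times T$, the closure of $\cY^\circ=\{(t,t^{-w}y)\}$. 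Under this identification, a morphism $\Spec R(\!(t)\!)\to\cY_w|_{\Gm}\cong Y\times\Gm$ induced by $t^{-w}f$ is the homomorphism $A_w\to R(\!(t)\!)$ sending the generator $t^{-\langle m,w\rangle}\chi^m$ to $\chi^m(t^{-w}f)=t^{-\langle m,w\rangle}\chi^m(f)$, and $t$ to $t$.

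For (2)$\Rightarrow$(1), I would compose an extension $\Spec R[[t]]\to\cY_w$ with the closed immersion $\cY_w\hookrightarrow\AA^1\times T$ and the projection to $T$. This gives an $R[[t]]$-point of $T$ whose restriction to $\Spec R(\!(t)\!)$ is $t^{-w}f$. Concretely, each $\chi^m(t^{-w}f)$ then lies in $R[[t]]$, and since $\chi^{-m}$ is also a generator, it is a unit of $R[[t]]$.

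For (1)$\Rightarrow$(2), suppose $\chi^m(t^{-w}f)\in R[[t]]^\times$ for all $m$. Then the ring map $\phi\colon R_0[M]^w\to R(\!(t)\!)$ above factors through $R[[t]]$, because its generators land there and $t\mapsto t$. It remains to check that $\phi$ kills $J:=I_{\CC(t)}\cap R_0[M]^w$. If $g\in J$, then $g=\sum_i c_i(t) h_i$ with $h_i\in I$ and $c_i\in\CC(t)$ (after rewriting in the variables $\chi^m$). Clearing denominators, $p(t)g\in I\otimes\CC[t]$ for some nonzero $p\in\CC[t]$. Because $f$ is a point of $Y$, every element of $I$ maps to $0$ under $f^*$, so $\phi(p g)=p(t)\phi(g)=0$ in $R(\!(t)\!)$.

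Since $p(t)$ is a nonzero polynomial, it becomes a unit in $R(\!(t)\!)$: write $p=t^k q$ with $q(0)\neq0$, so that $q$ is a unit in $R[[t]]$. It follows that $\phi(g)=0$, even though $R$ may have zero divisors; this works because $p$ has complex coefficients and the leading coefficient of $q$ is invertible. Thus $\phi$ induces $A_w\to R[[t]]$, which is a morphism over $\AA^1$ extending the given one. Uniqueness and compatibility hold because $R[[t]]\to R(\!(t)\!)$ is injective.

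The main obstacle I expect is bookkeeping: making sure that ``$I_{\CC(t)}$'' is interpreted so that the clearing-denominators step applies, and that torsion in $R$ does not interfere. The argument above handles the latter, since only inverting $t$ and multiplying by units of $R[[t]]$ is needed.
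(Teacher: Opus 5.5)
Your proof is correct and follows essentially the same route as the paper: (2)$\Rightarrow$(1) by composing with the closed immersion $\cY_w\hookrightarrow \AA^1\times T$ and projecting to $T$, and (1)$\Rightarrow$(2) by showing that the defining ideal of $\cY_w$ pulls back to zero in $R(\!(t)\!)$ and then using injectivity of $R[[t]]\to R(\!(t)\!)$. Your clearing-denominators step, which uses that every nonzero $p\in\CC[t]$ is a unit in $R(\!(t)\!)$, just makes explicit the paper's brief assertion that the restriction to $\Spec R(\!(t)\!)$ factors through $\cY_w|_{\Gm}$.
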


\begin{proof}
By construction, the Gröbner degeneration is a closed subscheme $\cY_w\subset T\times\AA^1$ whose restriction over $\Gm$ is identified with $Y\times\Gm$ by the morphism
\[
Y\times\Gm\longrightarrow\cY_w|_{\Gm},
\qquad
(y,t)\longmapsto(t^{-w}y,t).
\]
The implication \textup{(2)}$\Rightarrow$\textup{(1)} is therefore immediate: compose the extension $\Spec R[[t]]\to\cY_w$ with the closed immersion $\cY_w\hookrightarrow T\times\AA^1$ and project to $T$.

Conversely, suppose that $t^{-w}f\in T(R[[t]])$. Together with the standard morphism $\Spec R[[t]]\to\AA^1$ induced by $\CC[t]\to R[[t]]$, this gives a morphism $\widetilde f:\Spec R[[t]]\to T\times\AA^1$. Its restriction to $\Spec R(\!(t)\!)$ factors through $\cY_w|_{\Gm}\cong Y\times\Gm$.

It remains to show that $\widetilde f$ factors through the closed subscheme $\cY_w$. Let $I_w$ be the ideal defining $\cY_w$. For every $g\in I_w$, the pullback $\widetilde f^{\,*}(g)\in R[[t]]$ vanishes after inverting $t$, since the restriction of $\widetilde f$ to $\Spec R(\!(t)\!)$ factors through $\cY_w|_{\Gm}$. The natural map $R[[t]]\to R(\!(t)\!)=R[[t]][t^{-1}]$ is injective, because multiplication by $t$ is injective on $R[[t]]$. Hence $\widetilde f^{\,*}(g)=0$ for every $g\in I_w$. Therefore, $\widetilde f$ factors through $\cY_w$, giving the required extension $\Spec R[[t]]\to\cY_w$.
\end{proof}

We are now ready to prove the theorem.

\begin{proof}[Proof of Theorem \ref{thm:tropical-fiber-main}]
For (1), note that for every $\CC$-algebra $R$,
\[
L_wY(R)
=
\left\{
f\in Y(R(\!(t)\!))
\;\middle|\;
t^{-w}f\in\cY_w(R[[t]])
\right\}.
\]
Since $\cY_w|_{\Spec\CC(\!(t)\!)}\cong
Y\times\Spec\CC(\!(t)\!)$, giving such an $f$ is equivalent to giving a morphism $\Spec R[[t]]\to\widehat{\cY}_w$ over $\Spec\CC[[t]]$. Hence
\[
L_wY(R)
=
\Hom_{\Spec\CC[[t]]}
\bigl(\Spec R[[t]],\widehat{\cY}_w\bigr)
=
\Gr_\infty(\widehat{\cY}_w)(R).
\]
Therefore, $L_wY\cong\Gr_\infty(\widehat{\cY}_w)$ by Yoneda's lemma. In particular, $L_wY$ is represented by an affine scheme, in general not of finite type over $\CC$. Combining Lemmas \ref{lem:tropalg} and \ref{lem:grobner-extension}, we obtain $L_wY(\CC)=\val^{-1}(w)$. This concludes the proof of part (1).

For (2), under the identification $L_wY\cong\Gr_\infty(\widehat{\cY}_w)$, the natural truncation morphism $\Gr_\infty(\widehat{\cY}_w)\to\Gr_0(\widehat{\cY}_w)$ is given on $R$-points by reduction modulo $t$. Since
\[
\Gr_0(\widehat{\cY}_w)
\cong
\widehat{\cY}_w
\times_{\Spec\CC[[t]]}\Spec\CC
\cong
\init_w(Y),
\]
this defines the initial-term morphism $\init_w:L_wY\to\init_w(Y)$.

Assume now that $Y$ and $\init_w(Y)$ are smooth. Since the Gröbner degeneration $\cY_w\to\AA^1$ is flat and of finite presentation, its fibers over $\Gm$ are isomorphic to $Y$, while its fiber over $0$ is $\init_w(Y)$. Thus, all its fibers are smooth, and consequently $\cY_w\to\AA^1$ is smooth. It follows that $\widehat{\cY}_w\to\Spec\CC[[t]]$ is smooth. The result now follows from \cite[Chapter 5, \S 1.2, Proposition 1.2.2]{CNS}: the morphism $\init_w:L_wY\to\init_w(Y)$ is a locally trivial fibration with fiber $(\AA^\infty)^d$, where $d=\dim Y$. This proves (2).

For (3), the closed immersion $Y\hookrightarrow T$ induces an ind-closed immersion $LY\hookrightarrow LT$ and hence a morphism $LY\to\Gr_T$, where $\Gr_T=LT/L^+T$ is the affine Grassmannian of $T$. For each $w\in X_*(T)$, let $\Gr_T^w$ denote the corresponding connected component of $\Gr_T$. Since
\[
\Gr_T=\coprod_{w\in X_*(T)}\Gr_T^w,
\]
base change along $LY\to\Gr_T$ gives a decomposition of ind-schemes
\[
LY=\coprod_{w\in X_*(T)}
\left(LY\times_{\Gr_T}\Gr_T^w\right).
\]
The factor indexed by $w$ is empty unless $w\in\Trop(Y)(\ZZ)$, since any geometric point of this factor determines a loop in $Y$ with valuation $w$. Hence
\[
LY=\coprod_{w\in\Trop(Y)(\ZZ)}
\left(LY\times_{\Gr_T}\Gr_T^w\right).
\]

We now claim that $L_wY=LY\times_{\Gr_T}\{t^w\}$. Indeed, let $R$ be a $\CC$-algebra and let $f\in Y(R(\!(t)\!))$. Since $\Gr_T=LT/L^+T$ is the fpqc sheaf quotient, the equality $[f]=t^w$ in $\Gr_T(R)$ is equivalent to the existence of a faithfully flat extension $R\to R'$ such that $t^{-w}f\in T(R'[[t]])$. This condition descends to $R$, so it is equivalent to $t^{-w}f\in T(R[[t]])$. By Lemma \ref{lem:grobner-extension}, this is equivalent to $t^{-w}f\in\cY_w(R[[t]])$, which is precisely the defining condition for $f\in L_wY(R)$.

Finally, $(\Gr_T^w)_{\red}=\{t^w\}$, so the reduction morphism $\{t^w\}\to\Gr_T^w$ is a universal homeomorphism. Its base change along $LY\to\Gr_T$ is therefore also a universal homeomorphism. Since this base change is the natural morphism $L_wY\to LY\times_{\Gr_T}\Gr_T^w$, the latter induces a homeomorphism on underlying topological spaces. This proves (3).

For (4), suppose $Y$ is schön. By part \textup{(3)}, we have
\[
\pi_0(LY)
\cong
\coprod_{w\in\Trop(Y)(\ZZ)}
\pi_0(L_wY).
\]
Moreover, $Y$ and every initial variety $\init_w(Y)$ are smooth. By part \textup{(2)}, the initial-term morphism $\init_w:L_wY\to\init_w(Y)$ is a locally trivial fibration with fiber $(\AA^\infty)^{\dim Y}$. In particular, it induces a bijection
\[
\pi_0(L_wY)
\xrightarrow{\sim}
\pi_0\bigl(\init_w(Y)\bigr).
\]
Taking the disjoint union over all $w\in\Trop(Y)(\ZZ)$ gives
\[
\pi_0(LY)
\cong
\coprod_{w\in\Trop(Y)(\ZZ)}
\pi_0\bigl(\init_w(Y)\bigr)
=
\mathcal T(Y).
\]
This concludes the proof of (4).
\end{proof}

\section{Algebraic loops in smooth very affine varieties} \label{s:alg-loops-very-affine}
In this section, we discuss the relationship between tropicalization and the algebraic loop map constructed in Theorem \ref{thm:Lalg}. 

\subsection{Algebraic loops and the valuation map} Let $Y$ be a very affine variety and choose an embedding $Y\hookrightarrow T$ into the intrinsic torus. We then obtain a homomorphism 
\[
\pi_1(Y)\ra \pi_1(T)=X_*(T).
\]
 As the target is commutative, this homomorphism is constant on conjugacy classes and induces a map 
 \[
 \pi_1(Y)/\!\!\sim\,\, \ra X_*(T).
 \]
  This map is independent of the chosen embedding. 
\begin{prop}\label{prop:tropwinding}
The following diagram is commutative:
\[
\xymatrix{
Y(F) \ar[r]^{\cL_{\alg}} \ar[dr]^{\val \quad} & \pi_{1}(Y)/\sim \ar[d] \\
& \qquad X_*(T).
}
\]

\end{prop}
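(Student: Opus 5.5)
The plan is to reduce the statement to the torus case by naturality. Let $j\colon Y\hookrightarrow T$ be the chosen closed embedding into the intrinsic torus. By Corollary~\ref{cor:naturality} applied to $j$, we have
\[
j_*\bigl(\cL_{\alg,Y}(f)\bigr)=\cL_{\alg,T}(j\circ f)
\]
in $[S^1,T^{\an}]=\pi_1(T)=X_*(T)$; the latter identification holds because $\pi_1(T)$ is abelian, so free homotopy classes coincide with elements of the fundamental group. The vertical map in the diagram is exactly $j_*$ on conjugacy classes. It therefore suffices to check that $\cL_{\alg,T}(j\circ f)=\val_f$.

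For the torus, Example~\ref{expl:algloops}(3) states that $\cL_{\alg,T}(h)$ is the cocharacter $\chi\mapsto\val_t(\chi(h))$. To avoid relying on an example, I would verify this directly from the meridian formula~\eqref{eq:meridian}. Choose an isomorphism $T\cong\Gm^n$ and compactify to $(\mathbb P^1)^n$, whose boundary is SNC. The loop $h=(h_1,\dots,h_n)$ with $h_i=t^{v_i}u_i$, $u_i\in\CC[[t]]^\times$, extends to an arc centered at a point whose coordinates are $0$, $\infty$, or finite and nonzero according to the sign of $v_i$. The contact order with $\{x_i=0\}$ is $v_i$ when $v_i>0$, and with $\{x_i=\infty\}$ it is $-v_i$ when $v_i<0$. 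The meridian around $\infty$, in the local coordinate $1/x_i$, is the inverse of the standard generator of $\pi_1(\Gm)$ in the $i$-th factor. Hence the class is $(v_1,\dots,v_n)\in\ZZ^n=X_*(T)$. Pairing with a character $\chi=x^m$ gives $\sum m_iv_i=\val_t(\chi(h))$.

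Applying this to $h=j\circ f$ and using $\chi(j\circ f)=f^*\chi$, we get $\langle\chi,\cL_{\alg,T}(j\circ f)\rangle=\val_t(f^*\chi)=\langle\chi,\val_f\rangle$ for all $\chi$. By perfectness of the pairing, $\cL_{\alg,T}(j\circ f)=\val_f$, and the diagram commutes.

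The main obstacle is the sign and orientation bookkeeping at the points at infinity. One must check that the positive meridian around $\{1/x_i=0\}$ is homotopic in $\Gm$ to the loop $e^{-i\theta}$, so that a negative valuation correctly gives a negative winding number. As a cross-check, I would instead use the constancy principle: the family $s\mapsto t^{v}u(st)$ over $\AA^1$ deforms $h$ to $t^v u(0)$, whose loop is evidently $\theta\mapsto u(0)e^{iv\theta}$. This sidesteps the compactification entirely via Corollary~\ref{cor:constancy-components}, with $S=\AA^1$ and the evident $\CC[s]((t))$-point.
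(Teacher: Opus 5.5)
Your proposal is correct and follows essentially the same route as the paper: apply the naturality of Corollary~\ref{cor:naturality} to the closed embedding $Y\hookrightarrow T$, then reduce to the torus case, which the paper simply quotes from Example~\ref{expl:algloops}(3). Your explicit verification of that torus case via the meridian formula on $(\mathbb P^1)^n$, with the orientation at $\infty$ handled correctly, supplies a step the paper takes for granted.
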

\begin{proof} 
By naturality of the algebraic loop map in Corollary \ref{cor:naturality}, the closed immersion $i:Y\hookrightarrow T$ induces a commutative diagram
\[
\xymatrix{
Y(F) \ar[r]^{\cL_{\alg}} \ar[d]_{i} &
[S^1,Y^{\an}] \ar[d]^{i_*} \\
T(F) \ar[r]^{\cL_{\alg,T}} &
[S^1,T^{\an}].
}
\]
As seen in Example \ref{expl:algloops} (3), under the identification
\[
[S^1,T^{\an}] \cong X_*(T),
\]
the map $\cL_{\alg,T}$ is the valuation map. Consequently, the composite
\[
Y(F)\longrightarrow T(F)\longrightarrow X_*(T)
\]
is precisely $\val:Y(F)\to X_*(T)$.
\end{proof}

\begin{cor}\label{cor:loop-determines-valuation}
Let $Y$ be a smooth connected very affine variety. If two formal loops
$f,g\in Y(F)$ determine the same algebraic loop, then they have the same
tropicalization:
\[
\cL_{\alg}(f)=\cL_{\alg}(g)
\qquad\Longrightarrow\qquad
\val_f=\val_g.
\]
In particular, algebraic loops arising from distinct integral tropical
points are distinct.
\end{cor}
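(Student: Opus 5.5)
This is a direct consequence of Proposition~\ref{prop:tropwinding}, and the plan is to compose maps. Let $\rho\colon \pi_1(Y^{\an})/\!\sim\,\to X_*(T)$ denote the vertical map in that diagram. It is induced by the homomorphism $\pi_1(Y^{\an})\to\pi_1(T^{\an})=X_*(T)$ coming from the closed embedding $Y\hookrightarrow T$. It is well defined on conjugacy classes because $X_*(T)$ is abelian, and, as noted before the proposition, it does not depend on the chosen embedding.

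The steps are as follows.
\begin{enumerate}
\item Commutativity of the diagram in Proposition~\ref{prop:tropwinding} gives
\[
\val_f=\rho\bigl(\cL_{\alg}(f)\bigr)
\qquad\text{and}\qquad
\val_g=\rho\bigl(\cL_{\alg}(g)\bigr).
\]
\item If $\cL_{\alg}(f)=\cL_{\alg}(g)$, then applying the single map $\rho$ to both sides yields $\val_f=\val_g$.
\item The final assertion is the contrapositive. Suppose $w\neq w'$ are integral tropical points realized by formal loops $f$ and $g$, so that $\val_f=w$ and $\val_g=w'$. Then $\cL_{\alg}(f)\neq\cL_{\alg}(g)$, so the algebraic loops lying over distinct integral tropical points are distinct.
\end{enumerate}

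Two points should be checked. First, Proposition~\ref{prop:tropwinding} is stated for $Y$ with the algebraic loop map defined. That map requires $Y$ smooth and connected by Theorem~\ref{thm:Lalg}, and both are hypotheses of the corollary. Second, the naturality in Corollary~\ref{cor:naturality} is applied to the morphism $Y\hookrightarrow T$ between smooth varieties, which is valid.

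There is no substantive obstacle; the corollary is formal. The only point needing care is that $\rho$ is well defined on free homotopy classes, i.e.\ on conjugacy classes rather than on based loops, and this holds because the target is abelian.
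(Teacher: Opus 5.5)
Your proposal is correct and matches the paper's proof: Proposition~\ref{prop:tropwinding} shows that the valuation of a formal loop is the image of its algebraic loop under the natural map $\pi_1(Y^{\an})/\!\sim\;\to X_*(T)$. Hence equal algebraic loops force equal valuations, and the final assertion is just the contrapositive.
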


\begin{proof}
By the above Proposition, the valuation of a formal loop is
the image of its algebraic loop under the natural map
$\pi_1(Y^{\an})/\!\sim\;\longrightarrow X_*(T)$. 
The result follows immediately.
\end{proof}

\begin{exam} In the situation of Example \ref{ex:hyparr}, we are given a basis of $\cO(M)^{\times} / \CC^{\times}$, up to individual scaling of the defining linear forms $\ell_{i}$. This scaling has no influence on valuations, and given a formal loop $f : \cD^{\times} \to M$ we get a canonical valuation vector 
\[
w_f=\bigl(\val_t(\ell_1(f)),\ldots,\val_t(\ell_n(f))\bigr).
\]
 We call this vector the \emph{hyperplane valuation function}, in analogy to the root valuation functions defined in \cite{GKM}.
\end{exam}

\subsection{Algebraic loops and Gröbner degenerations}
In this subsection, we explain how the Gröbner degeneration associated with the valuation of a formal loop encodes its leading-order behavior and its associated topological loop. Although a formal loop $f\in Y(F)$ need not extend to $t=0$ as a map to $Y$, it extends uniquely to an arc $\G_f$ in the Gröbner degeneration $\cY_{w_f}$. Its value at the origin defines a central point
\[
c_f\in\init_{w_f}(Y),
\]
which records the leading coefficients of $f$. We show that sufficiently accurate convergent approximations of $\G_f$ recover the algebraic loop $\g_f=\cL_{\alg}(f)$. We then describe the image of this loop in the intrinsic torus: after removing the radial growth determined by $w_f$, the loop deforms to the cocharacter loop $\theta\mapsto(e^{i\theta})^{w_f}c_f$.

\begin{prop}\label{p:degenloop}
Let $Y$ be a smooth very affine variety, let $f\in Y(F)$, and put
$w=\val(f)$. Let
\[
(f,t):\cD^\times\longrightarrow Y\times\Gm
\]
be the induced morphism. Then:
\begin{enumerate}
\item The morphism $(f,t)$ extends uniquely to an arc $\G_f:\cD\longrightarrow\cY_w$ over $\AA^1$.

\item For all sufficiently large $N$, any order-$N$ algebraic
approximation
\[
\G_N:\cD\longrightarrow\cY_w
\]
of $\G_f$ over $\AA^1$ restricts, on a sufficiently small punctured
analytic disk, to a map
\[
\G_N^{\an}:S_r^1\longrightarrow Y^{\an}\times\CC^\times,
\qquad
t\longmapsto (p_N(t),t),
\]
where the free homotopy class of $p_N$ is
\[
[p_N]=\cL_{\alg}(f).
\]
\end{enumerate}
\end{prop}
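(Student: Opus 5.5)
Part (1) follows from the results of the previous section. By Lemma~\ref{lem:tropalg}, $\val_f=w$ gives $t^{-w}f\in T(\CC[[t]])$. Lemma~\ref{lem:grobner-extension} with $R=\CC$ then shows that the morphism $\Spec F\to Y\times\Gm\cong\cY_w|_{\Gm}$, namely $(f,t)$ composed with $(y,t)\mapsto(t^{-w}y,t)$, extends to an arc $\G_f:\cD\to\cY_w$ over $\AA^1$. Uniqueness holds because $\cY_w$ is a closed subscheme of the separated scheme $T\times\AA^1$ and $\CC[[t]]\to F$ is injective, so any two extensions agree as maps to $T\times\AA^1$. In coordinates, $\G_f=(t^{-w}f,t)$ and $c_f=\G_f(0)=(t^{-w}f)(0)$.

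For part (2), I would apply Artin approximation (\cite[Theorem 1.10]{Artin}) to the finite type $\CC[t]$-scheme $\cY_w$, exactly as in the proof of Theorem~\ref{thm:Lalg}, to obtain for each $N$ an arc $\G_N\in\cY_w(\Ctalg)$ over $\AA^1$ with $\G_N\equiv\G_f\pmod{t^N}$. Its second coordinate is $t$, since it lies over $\AA^1$. Being algebraic, $\G_N$ converges on some disk $\Delta_\varepsilon$. Over $\Delta_\varepsilon^\times\subset\Gm$ we have $\cY_w^{\an}|_{\Delta_\varepsilon^\times}\cong Y^{\an}\times\Delta_\varepsilon^\times$, so $\G_N^{\an}(t)=(t^{-w}p_N(t),t)$ with $p_N(t)\in Y^{\an}$. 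Equivalently, $f_N:=t^{w}\cdot(\text{first coordinate of }\G_N)$ is an algebraic formal loop in $Y(F)$ with $f_N\equiv f$ to high $t$-adic order, in the sense that $t^{-w}f_N\equiv t^{-w}f\pmod{t^N}$. I read the statement's $p_N$ as this analytic map $t\mapsto t^{w}\cdot(\text{first coordinate of }\G_N(t))$ into $Y^{\an}$, which is what the trivialization $Y\times\Gm\cong\cY_w|_{\Gm}$ produces.

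It remains to show $[p_N|_{S^1_r}]=\cL_{\alg}(f)$. For this I would fix an SNC compactification $X\supset Y$ and let $g,g_N:\cD\to X$ be the arcs extending $f$ and $f_N$. The \textbf{main obstacle} is to check that $g_N$ is a valid order-$N'$ approximation of $g$ with $N'>N_0$, so that the meridian formula \eqref{eq:meridian} applies to $g_N$. Once this holds, the restriction of $g_N$ to a small circle represents $\cL_{\alg}(f)$ by the proof of Theorem~\ref{thm:Lalg}, and that restriction is exactly $p_N$.

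To prove the approximation claim, I would use that $X$ is covered by finitely many affine opens and that $g(0)=x$ lies in an affine chart $U_0=\Spec A$. On the smaller set $U_0\cap T$, each generator $a$ of $A$ restricts to a Laurent polynomial in characters. Hence $g^*a$ and $g_N^*a$ are polynomials in the $\chi(f)=t^{\langle\chi,w\rangle}\chi(t^{-w}f)$ and their inverses. Since $\chi(t^{-w}f)$ is a unit of $\CC[[t]]$ and is approximated modulo $t^N$, as are its inverses, we get $g_N^*a\equiv g^*a\pmod{t^{N-C}}$. Here $C$ depends only on $w$ and the finitely many generators of $A$. This shows that $g_N$ lands in $U_0$ for $N$ large and that $g_N\equiv g\pmod{t^{N-C}}$. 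Choosing $N$ with $N-C>N_0$ finishes the argument, because the meridian formula shows the loop depends only on the contact orders $m_i$, and these are determined by $g\bmod t^{N-C}$. If controlling the pole orders in this way proved awkward, I would fall back on Corollary~\ref{cor:constancy-components}, using the family of algebraic loops interpolating $f$ and $f_N$.
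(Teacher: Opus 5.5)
Your argument is sound in outline and takes a different route from the paper's for part (2). One step is misstated and needs a routine repair.

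\textbf{Comparison.} Part (1) agrees with the paper, which simply cites Lemma~\ref{lem:grobner-extension}; your uniqueness argument via separatedness is a useful addition.

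For part (2), the paper never passes to a compactification of $Y$. It treats $\G_N$ as an approximation computing the algebraic loop of $(f,t)$ in $Y\times\Gm$, so that $\G_N^{\an}$ represents $\cL_{\alg,Y\times\Gm}(f,t)$. It then applies naturality (Corollary~\ref{cor:naturality}) to $\pr_Y$ to get $[p_N]=\cL_{\alg}(f)$. This is shorter, but its first step takes for granted that approximating inside the partial compactification $\cY_w\supset Y\times\Gm$ computes the algebraic loop. That space is neither proper nor SNC, and not even smooth unless $Y$ is sch\"on. What is assumed there is exactly a transfer of $t$-adic closeness of the kind you prove.

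Your route untwists $\G_N$ to $f_N=t^{w}\cdot\pr_T\G_N$. You transfer $t^{-w}f_N\equiv t^{-w}f \pmod{t^N}$ to an affine chart of an SNC compactification $X$ of $Y$, and apply the meridian formula of Theorem~\ref{thm:Lalg} directly. This avoids the Achinger--Talpo naturality and makes explicit where very-affineness is used: chart functions are rational in characters.

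\textbf{The step to fix.} You claim that each generator $a$ of $A=\cO(U_0)$ restricts to a Laurent polynomial in characters. (Also, $U_0\cap T$ should read $U_0\cap Y$, since $T\not\subset X$.) For an arbitrary SNC compactification this is false, because $U_0\cap Y$ can be a proper open subset of $Y$. For example, take $Y=\Gm\subset\mathbb P^1=X$ and $U_0=\mathbb P^1\setminus\{1\}$; then $\cO(U_0)=\CC[1/(z-1)]$.

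The repair:
\begin{enumerate}
\item Pick $s\in\cO(Y)$ with $f(\eta)\in D_Y(s)\subset U_0\cap Y$, and write $a|_{D_Y(s)}=h_a/s^{k_a}$ with $h_a\in\cO(Y)$.
\item Since $e:=\val_t(f^*s)$ is finite, $\val_t(f_N^*s)=e$ for $N$ large. Hence $f_N$ factors generically through $D_Y(s)\subset U_0$. This is what makes $f_N^*a$ meaningful before you know that $g_N$ lands in $U_0$.
\item You then get $f_N^*a\equiv f^*a\pmod{t^{N-C}}$, where $C$ depends on $w$, $e$, and the $h_a,k_a$. So $C$ depends on $f$, not only on $w$ and $A$; this is harmless.
\item Consequently $f_N^*a\in\Ctalg[t^{-1}]\cap\CC[[t]]=\Ctalg$. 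By separatedness, $g_N$ is this $\Ctalg$-arc in $U_0$, congruent to $g$ modulo $t^{N-C}$, and the rest of your argument goes through.
\end{enumerate}

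The fallback via Corollary~\ref{cor:constancy-components} would need more than you state. There is no linear interpolation inside $Y$, and producing a connected algebraic family through $f$ and $f_N$ requires Greenberg-type structure results for $\widehat{\cY}_w$. With the repair above, however, the fallback is not needed.
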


\begin{proof}
Part \textup{(1)} is Lemma \ref{lem:grobner-extension}.

For \textup{(2)}, choose $N$ sufficiently large that the approximation $\G_N$ computes the algebraic loop associated with the formal arc $\G_f$ and such that, on a sufficiently small punctured disk, its image is contained in the open subscheme
\[
Y\times\Gm\subset\cY_w.
\]
Since $\G_N$ is a morphism over $\AA^1$, its restriction to
$S_r^1$ is necessarily of the form
\[
t\longmapsto (p_N(t),t).
\]
On the generic fiber, $\G_f$ is the formal loop
\[
(f,t):\cD^\times\longrightarrow Y\times\Gm.
\]
Hence the loop represented by $\G_N^{\an}$ is
\[
\cL_{\alg,Y\times\Gm}(f,t).
\]
By naturality of the algebraic loop map with respect to the projection
$\pr_Y:Y\times\Gm\to Y$,
\[
(\pr_Y)_*
\cL_{\alg,Y\times\Gm}(f,t)
=
\cL_{\alg,Y}(f).
\]
The left-hand side is represented by $p_N$, and therefore $[p_N]=\cL_{\alg}(f)$.
\end{proof}

We now give an explicit description of the algebraic loop associated
with a formal loop in terms of the corresponding initial variety.
Let $f\in Y(F)$ have valuation $w=w_f$ and let
\[
\G_f:\cD\longrightarrow \cY_w
\]
be the arc in the Gröbner degeneration constructed above, with central
point
\[
c=c_f:=\G_f(0)\in\init_w(Y).
\]
The idea is to compute the limit, after renormalization by $w$, of a
convergent approximation of $\G_f$. 
\begin{lemma}\label{lem:central-loop}
After replacing $f$ by a sufficiently accurate convergent approximation,
let
\[
f:\Delta_{r_0}^{\times}\longrightarrow Y^{\an}
\]
denote the corresponding holomorphic map. For every $0<r_1<r_0$, the map
\[
H:[0,r_1]\times S^1\longrightarrow T^{\an},
\]
defined by
\[
H(r,e^{i\theta})=
\begin{cases}
r^{-w}f(re^{i\theta}), & r>0,\\[1ex]
(e^{i\theta})^w c, & r=0,
\end{cases}
\]
is continuous. 
\end{lemma}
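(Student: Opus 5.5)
The plan is to reduce everything to coordinates on the torus and then use Lemma~\ref{lem:tropalg}. Fix a basis $\chi_1,\dots,\chi_n$ of $X^*(T)$, so that $T\cong\Gm^n$ and a point of $T^{\an}$ is determined by the values $\chi_j$. Continuity of $H$ into $T^{\an}$ is then equivalent to continuity of each coordinate function $\chi_j\circ H$ on $[0,r_1]\times S^1$. On $(0,r_1]\times S^1$ this is clear, because $f$ is holomorphic on $\Delta_{r_0}^\times$ and $r\mapsto r^{-w}$ is continuous. The only issue is continuity along the circle $\{0\}\times S^1$.

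\textbf{Step 1: choice of approximation.} Write $w_j=\langle\chi_j,w\rangle$. By Lemma~\ref{lem:tropalg}, $t^{-w}f\in T(\CC[[t]])$, so
\[
\chi_j(f)=t^{w_j}u_j(t),\qquad u_j\in\CC[[t]]^\times .
\]
By Proposition~\ref{p:degenloop}(1), $f$ extends to the arc $\G_f$ in $\cY_w\subset T\times\AA^1$, whose $T$-component is $t^{-w}f$. Hence the central point is $c=\G_f(0)$, and $\chi_j(c)=u_j(0)$.

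I will choose the convergent approximation exactly as in Proposition~\ref{p:degenloop}(2): take an order-$N$ algebraic approximation $\G_N$ of $\G_f$ with $N\ge 1$ and $N$ larger than all $|w_j|$. Its $T$-component $\tilde u$ agrees with $t^{-w}f$ modulo $t^N$, and it is given by convergent power series. So $\chi_j(\tilde u)=\tilde u_j(t)$ is holomorphic on a small disc, with $\tilde u_j(0)=u_j(0)=\chi_j(c)$. The approximating loop is $\tilde f=t^{w}\tilde u$, with $\chi_j(\tilde f)=t^{w_j}\tilde u_j(t)$. I also shrink $r_0$ so that each $\tilde u_j$ is nonvanishing there.

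\textbf{Step 2: the computation.} For $r>0$, write $t=re^{i\theta}$. Then
\[
\chi_j\bigl(r^{-w}\tilde f(re^{i\theta})\bigr)=r^{-w_j}(re^{i\theta})^{w_j}\tilde u_j(re^{i\theta})=e^{iw_j\theta}\,\tilde u_j(re^{i\theta}).
\]
Here $\tilde u_j$ is continuous on the closed disc of radius $r_1<r_0$. So as $(r,\theta)\to(0,\theta_0)$ this tends to $e^{iw_j\theta_0}\tilde u_j(0)=\chi_j\bigl((e^{i\theta_0})^wc\bigr)$, and the limit is uniform in $\theta$. The formula also extends continuously in $\theta$ at $r=0$, so $H$ is jointly continuous on $[0,r_1]\times S^1$. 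Since this holds for all $j$, and the limit is nonzero, the limit stays inside $T^{\an}$ rather than escaping to the boundary of $\CC^n$.

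\textbf{Main subtlety.} The only delicate point is bookkeeping: the phrase ``replacing $f$ by a sufficiently accurate convergent approximation'' has to be taken as approximating the arc $\G_f$ in the Gröbner degeneration, or equivalently the $T[[t]]$-point $t^{-w}f$, and not $f$ itself. Only then are the renormalized coordinates $\tilde u_j$ honestly holomorphic at $t=0$ with the correct constant terms $\chi_j(c)$. Approximating to order $N\ge 1$ suffices for that, and compatibility with Proposition~\ref{p:degenloop}(2) comes from taking $N$ large.
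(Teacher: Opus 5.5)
Your proof is correct and follows the paper's argument. The paper writes $t^{-w}f=u$ with $u$ holomorphic across $0$ and $u(0)=c$, so that $r^{-w}f(re^{i\theta})=(e^{i\theta})^w u(re^{i\theta})$ extends continuously to $r=0$; you carry out exactly this computation coordinatewise in a character basis, taking the approximation of the Gröbner arc $\Gamma_f$ as in Proposition~\ref{p:degenloop}.

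One small overstatement: approximating $f$ itself also preserves the leading coefficients $\chi_j(c)$, for instance via the compactification arc of Theorem~\ref{thm:Lalg} to an order exceeding the contact orders. So the Gröbner arc is a convenient choice rather than a necessary one.
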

\begin{proof}
Write
\[
t^{-w}f=u,
\]
where $u$ extends holomorphically across $0$ with $u(0)=c$. Then
\[
r^{-w}f(re^{i\theta})
=(e^{i\theta})^wu(re^{i\theta}).
\]
Since $u$ is continuous, the map
\[
(\theta,r)\longmapsto (e^{i\theta})^wu(re^{i\theta})
\]
extends continuously to $r=0$, where it takes the value $(e^{i\theta})^wc$.
\end{proof}

A local triviality result for the Gröbner degeneration then allows us to transport the loop $\g_f$ to the special fiber $\init_{w}(Y)$ where it is represented by the cocharacter loop $\theta \mapsto (e^{i\theta})^wc$.

\begin{theorem}\label{thm:initialloop}
Assume that $Y$ and $\init_w(Y)$ are smooth, and let $K\subset \init_w(Y)^{\an}$ be the compact image of the cocharacter loop
\[
e^{i\theta}\longmapsto (e^{i\theta})^wc_f.
\]

Then there exist an open neighborhood $U_0\subset \init_w(Y)^{\an}$ of $K$, a disk $\Delta\subset\CC$ centered at $0$, and an open subset $U\subset \mathcal Y_w^{\an}$ containing $U_0$ such that the restriction 
\[
p|_U:U\longrightarrow \Delta
\]
of the Gröbner degeneration $p : \mathcal Y_{w}^{\an} \to \CC$ admits a trivialization
\[
U\simeq U_0\times\Delta
\]
over $\Delta$. For every sufficiently small $r>0$, under the induced identification
\[
\pi_1(U_r)\simeq \pi_1(U_0),
\qquad U_r:=U\cap p^{-1}(r),
\]
the algebraic loop $\gamma_f=\cL_{\alg}(f)$, viewed in the fiber $p^{-1}(r)$ via the canonical identification
\[
Y^{\an}\simeq p^{-1}(r),
\]
corresponds to the class of the loop
\[
S^1\longrightarrow \init_w(Y)^{\an},
\qquad
e^{i\theta}\longmapsto (e^{i\theta})^wc_f.
\]
\end{theorem}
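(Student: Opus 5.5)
The plan has three steps: produce a local trivialization of the smooth Gröbner degeneration near the compact set $K$, run the continuous map $H$ of Lemma \ref{lem:central-loop} through that trivialization, and finally reconcile the torus-translate identification of fibers with the $Y^{\an}\simeq p^{-1}(r)$ used in the statement.

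\textbf{Trivialization.} Since $Y$ and $\init_w(Y)$ are smooth, the proof of Theorem \ref{thm:tropical-fiber-main}(2) shows that $p:\cY_w\to\AA^1$ is a smooth morphism. Its analytification is therefore a holomorphic submersion. The set $K$ is compact, being the image of a circle under a continuous map, and it lies in the special fiber $p^{-1}(0)=\init_w(Y)^{\an}$. We now apply the local triviality result of Appendix \ref{app:local-triviality} for submersions near a compact subset of a fiber. It provides an open neighborhood $U_0$ of $K$ in $p^{-1}(0)$, a disk $\Delta$, and an open set $U\supset U_0$ together with a diffeomorphism $U\simeq U_0\times\Delta$ over $\Delta$. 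For $r\in\Delta$ this yields $U_r\simeq U_0$, and hence $\pi_1(U_r)\simeq\pi_1(U_0)$.

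\textbf{Homotopy.} Fix a sufficiently accurate convergent approximation of $f$. By Proposition \ref{p:degenloop}(2), for small $r$ the loop $\theta\mapsto f(re^{i\theta})$ represents $\gamma_f$ in $Y^{\an}$. Under the identification of $p^{-1}(r)$ with $r^{-w}Y\subset T$ coming from the description $\cY^\circ=\{(t,t^{-w}y)\}$, this loop becomes $\theta\mapsto (r, r^{-w}f(re^{i\theta}))$. The key step is to consider the map $\widetilde H(s,e^{i\theta})=(s,H(s,e^{i\theta}))$ for $s\in[0,r]$, where $H$ is the map of Lemma \ref{lem:central-loop}. Since $u=t^{-w}f$ extends holomorphically across $0$ (this is the arc $\G_f$ of Proposition \ref{p:degenloop}(1)), $\widetilde H$ actually takes values in $\cY_w^{\an}$, and Lemma \ref{lem:central-loop} shows that it is continuous. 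At $s=0$ its image is exactly $K\subset U_0$. Because $[0,r_1]\times S^1$ is compact and $U$ is open, after shrinking $r_1$ the whole image of $\widetilde H$ lies in $U$. Now compose with the projection $U\simeq U_0\times\Delta\to U_0$. The result is a free homotopy in $U_0$ between the image of the loop $\theta\mapsto r^{-w}f(re^{i\theta})$ in $U_r\simeq U_0$ and the cocharacter loop $\theta\mapsto(e^{i\theta})^wc_f$. This is precisely the statement, with the identification taken along $p^{-1}(r)$.

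\textbf{Main obstacle.} The delicate point is bookkeeping the two identifications of $p^{-1}(r)$ with $Y^{\an}$: the canonical one and the translate $y\mapsto r^{-w}y$. For real $r>0$ these coincide by construction of $\cY^\circ$. One must still check that the identification is compatible for all small $r$ and that the order-$N$ approximation lands in $U$ uniformly. I would handle this by fixing the trivialization first, then choosing $N$ large and $r$ small so that the approximation is uniformly $C^0$-close to $u$ on $\Delta_{r_1}$. Uniform closeness suffices because $U$ contains an $\epsilon$-neighborhood of the compact image of $\widetilde H$. Independence of the choice of $r$ then follows because the $U_r$ vary in the trivialized family.
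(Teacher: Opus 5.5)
Your proposal is correct and follows the paper's proof. Both arguments use smoothness of $p$, the continuous family over $[0,\delta)$ given by Lemma~\ref{lem:central-loop}, and the trivialization from Lemma~\ref{lem:loctrivcompact} near $K$; your step of projecting $\widetilde H$ to $U_0$ is the argument the paper packages as Corollary~\ref{cor:centloop}(2). The closing worry about uniform $C^0$-closeness is unnecessary: the convergent approximation of $\G_f$ already has central value $c_f$, so $\widetilde H$ built from it lands in $U$ by compactness alone.
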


\begin{proof}
Under our assumptions, the Gröbner degeneration
\[
p:\mathcal Y_w\longrightarrow\AA^1
\]
is smooth, hence its analytification is a submersion. By Proposition~\ref{p:degenloop}, we may compute $\gamma_f$ using a
sufficiently accurate convergent approximation of $\G_f$. By
Lemma~\ref{lem:central-loop}, after shrinking $\delta>0$ if necessary,
the corresponding rescaled loops define a continuous map
\[
P:S^1\times[0,\delta)\longrightarrow\mathcal Y_w^{\an}
\]
over $[0,\delta)$, given for $r>0$ by
\[
P(e^{i\theta},r) = \bigl(r^{-w}f(re^{i\theta}),r\bigr),
\]
and at $r=0$ by
\[
P(e^{i\theta},0)
=
\bigl((e^{i\theta})^wc_f,0\bigr).
\]
For $r>0$, under the canonical identification
\[
Y^{\an}\simeq p^{-1}(r),\qquad
y\longmapsto (r^{-w}y,r),
\]
the loop $P(-,r)$ represents $\gamma_f$. Now apply Lemma~\ref{lem:loctrivcompact} to the compact set
\[
K=P(S^1,0)\subset p^{-1}(0)=\init_w(Y)^{\an}.
\]
After shrinking $\delta$, we obtain an open neighborhood
$U_0\subset\init_w(Y)^{\an}$ of $K$, an open subset
$U\subset\mathcal Y_w^{\an}$, and a trivialization $U\simeq U_0\times\Delta$ over a disk $\Delta$ about $0$. By Corollary~\ref{cor:centloop}, the loop $P(-,r)\subset U_r$ is homotopic to the parallel transport of $P(-,0)$ for every sufficiently small $r>0$. Hence, under the induced identification $\pi_1(U_r) \simeq\pi_1(U_0)$, the class $\gamma_f$ corresponds to the class of $e^{i\theta}\longmapsto(e^{i\theta})^wc_f$.
\end{proof}
Recall that the enhanced integral tropicalization is
\[
\cT(Y)
=
\coprod_{w\in\Trop(Y)(\ZZ)}
\pi_0\bigl(\init_w(Y)\bigr).
\]
For $f\in Y(F)$, define
\[
\nu(f):=(w_f,[c_f])\in\cT(Y).
\]

\begin{cor}\label{cor:troplift}
Let $Y$ be a schön very affine variety. Then there is a natural
surjective map
\[
\cL_{\trop}:
\cT(Y)\longrightarrow [S^1,Y^{\an}]^{\alg}
\]
such that
\[
\xymatrix{
Y(F) \ar[r]^{\nu}\ar[dr]_{\cL_{\alg}}
& \cT(Y) \ar[d]^{\cL_{\trop}} \\
& [S^1,Y^{\an}]^{\alg}.
}
\]
For $(w,[c])\in\cT(Y)$, the class
$\cL_{\trop}(w,[c])$ is represented, after transport to the special
fiber $\init_w(Y)$, by
\[
e^{i\theta}\longmapsto(e^{i\theta})^wc,
\]
for any $c$ in the component $[c]$.
\end{cor}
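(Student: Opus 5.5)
We first define $\cL_{\trop}$ on $\cT(Y)$ and only afterwards check it. Fix $(w,[c])\in\cT(Y)$. The first task is to show that $[c]$ is hit by $\nu$, i.e.\ that some $f\in Y(F)$ has $\nu(f)=(w,[c])$. Since $Y$ is schön, both $Y$ and $\init_w(Y)$ are smooth. Theorem~\ref{thm:tropical-fiber-main}(2) then says the initial-term morphism $\init_w:L_wY\to\init_w(Y)$ is a locally trivial $(\AA^\infty)^{\dim Y}$-fibration, so in particular it is surjective on $\CC$-points. Hence for any $c'\in[c]$ there is $f\in L_wY(\CC)$ with $\init_w(f)=c'$.

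By Theorem~\ref{thm:tropical-fiber-main}(1), $L_wY(\CC)=\val^{-1}(w)$. Under the identification $L_wY\cong\Gr_\infty(\widehat\cY_w)$, the point $\init_w(f)$ is the reduction mod $t$ of the arc $t^{-w}f$, which is exactly $c_f=\G_f(0)$ from Proposition~\ref{p:degenloop}. So $\nu(f)=(w,[c'])$, and $\nu:Y(F)\to\cT(Y)$ is surjective.

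We then set $\cL_{\trop}(w,[c]):=\cL_{\alg}(f)$ for any $f$ with $\nu(f)=(w,[c])$. For this to be well defined we must show that $\cL_{\alg}(f)=\cL_{\alg}(g)$ whenever $\nu(f)=\nu(g)$. If $\nu(f)=\nu(g)=(w,[c])$, then $f,g\in L_wY(\CC)$ and their initial terms lie in the same component of $\init_w(Y)$. Since $\init_w$ induces a bijection $\pi_0(L_wY)\cong\pi_0(\init_w(Y))$ (proof of part (4)), $f$ and $g$ lie in the same connected component of $L_wY$. By part (3) they therefore lie in the same connected component of $LY$, and Corollary~\ref{cor:constant-components} gives $\cL_{\alg}(f)=\cL_{\alg}(g)$.

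Once $\cL_{\trop}$ is well defined, the diagram commutes by construction. Surjectivity onto $[S^1,Y^{\an}]^{\alg}=\operatorname{im}\cL_{\alg}$ is then immediate: every algebraic loop is $\cL_{\alg}(f)=\cL_{\trop}(\nu(f))$. Naturality reduces to that of $\nu$ and $\cL_{\alg}$.

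For the explicit representative, take $f$ with $\nu(f)=(w,[c])$. Theorem~\ref{thm:initialloop} identifies $\gamma_f$, after transport to the special fiber, with the loop $e^{i\theta}\mapsto(e^{i\theta})^wc_f$. It remains to replace $c_f$ by an arbitrary $c\in[c]$. Choose a path from $c_f$ to $c$ inside the component $[c]$; the map $(s,\theta)\mapsto(e^{i\theta})^w\cdot\text{path}(s)$ is then a free homotopy in $\init_w(Y)^{\an}$. This is legitimate because $\init_w(Y)$ is stable under the $w$-action of $\Gm$, the initial ideal being $w$-homogeneous.

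The one subtle point is what ``after transport'' means, since the neighborhood $U_0$ of Theorem~\ref{thm:initialloop} depends on $c_f$. The cleanest fix is to use instead the formal loop $f'$ with $c_{f'}=c$, which exists by the surjectivity established in the first step. Then $f'$ gives the same class by well-definedness, and Theorem~\ref{thm:initialloop} applied to $f'$ directly yields the representative $(e^{i\theta})^wc$. This sidesteps any comparison of transports. The main obstacle is really the well-definedness step, and it is fully handled by Corollary~\ref{cor:constant-components} together with Theorem~\ref{thm:tropical-fiber-main}.
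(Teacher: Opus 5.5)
Your proposal is correct and follows the paper's argument. The paper likewise factors $\cL_{\alg}$ through $\pi_0(LY)\cong\cT(Y)$ using Theorem~\ref{thm:tropical-fiber-main} and Corollary~\ref{cor:constant-components}, and reads off the representative from Theorem~\ref{thm:initialloop}. Your version unpacks this and makes two points explicit that the paper uses tacitly: that $\nu$ is surjective, i.e.\ every component of $\init_w(Y)$, and hence of $LY$, contains the initial term of a $\CC$-point; and that choosing $f'$ with $c_{f'}=c$ justifies the phrase ``for any $c$ in the component''.
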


\begin{proof}
By Theorem \ref{thm:tropical-fiber-main}\textup{(4)}, there is a
natural bijection
\[
\pi_0(LY)\xrightarrow{\sim}\cT(Y).
\]
By Corollary \ref{cor:constant-components}, the algebraic loop map is
constant on connected components of $LY$, and therefore factors as
\[
Y(F)=LY(\CC)
\longrightarrow
\pi_0(LY)
\longrightarrow
[S^1,Y^{\an}]^{\alg}.
\]
Transporting the second map across the above identification defines
$\cL_{\trop}$ and gives the commutative diagram. It is surjective by
definition of $[S^1,Y^{\an}]^{\alg}$. The description in the special fiber follows from Theorem \ref{thm:initialloop}.
\end{proof}

\begin{theorem}\label{thm:trop-classification}
Assume that $Y$ is schön and that $\init_w(Y)$ is connected for every
$w\in\Trop(Y)$. Then
\[
\cL_{\trop}:
\pi_0(LY)
\cong
\Trop(Y)(\ZZ)
\xrightarrow{\sim}
[S^1,Y^{\an}]^{\alg}
\]
is a bijection. The class corresponding to $w$ is represented, after transport to the
special fiber $\init_w(Y)$, by
\[
e^{i\theta}\longmapsto(e^{i\theta})^wc,
\]
for any $c\in\init_w(Y)$.
\end{theorem}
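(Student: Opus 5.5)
The plan is to combine Corollary~\ref{cor:troplift} with Corollary~\ref{cor:loop-determines-valuation}. Under the connectedness hypothesis, each $\pi_0(\init_w(Y))$ is a single point for $w\in\Trop(Y)(\ZZ)$. Hence the projection $\cT(Y)\to\Trop(Y)(\ZZ)$, $(w,[c])\mapsto w$, is a bijection. Composing with the bijection $\pi_0(LY)\xrightarrow{\sim}\cT(Y)$ of Theorem~\ref{thm:tropical-fiber-main}(4) gives the first identification $\pi_0(LY)\cong\Trop(Y)(\ZZ)$. Transporting the map of Corollary~\ref{cor:troplift} along this identification then produces a surjection
\[
\cL_{\trop}:\Trop(Y)(\ZZ)\longrightarrow[S^1,Y^{\an}]^{\alg},
\]
with $\cL_{\alg}=\cL_{\trop}\circ\val$ on $Y(F)$. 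Surjectivity holds by definition of the set of algebraic loops.

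Injectivity comes from Proposition~\ref{prop:tropwinding}. Take $w\neq w'$ in $\Trop(Y)(\ZZ)$. Both are realized by formal loops: nonemptiness of $\init_w(Y)$ and surjectivity of $\init_w:L_wY\to\init_w(Y)$ on $\CC$-points (it is a locally trivial affine-space bundle) give some $f$ with $\val_f=w$, and similarly $g$ with $\val_g=w'$. Then Corollary~\ref{cor:loop-determines-valuation} gives $\cL_{\alg}(f)\neq\cL_{\alg}(g)$. Equivalently, the composite $[S^1,Y^{\an}]^{\alg}\to\pi_1(Y)/\!\sim\,\to X_*(T)$ is a left inverse of $\cL_{\trop}$. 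That corollary assumes $Y$ is smooth and connected; $Y$ is smooth since it is schön, and connected since $Y\cong\init_0(Y)$ and $0\in\Trop(Y)$.

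For the explicit representative, Corollary~\ref{cor:troplift} already says that $\cL_{\trop}(w,[c])$ is represented, after transport to $\init_w(Y)$, by $e^{i\theta}\mapsto(e^{i\theta})^wc$ for any $c$ in the component. Since the component is all of $\init_w(Y)$, any $c\in\init_w(Y)$ works.

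The only step needing care is surjectivity of the initial-term map onto the central points, i.e. that every point of $\init_w(Y)$ (in particular some point) arises as $c_f$ for a formal loop $f$. This should follow from Hensel's lemma applied to the smooth morphism $\widehat{\cY}_w\to\Spec\CC[[t]]$, which is exactly what underlies Theorem~\ref{thm:tropical-fiber-main}(2); I would cite that result rather than reprove it.
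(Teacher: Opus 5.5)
Your proposal is correct and follows essentially the same route as the paper. Connectedness identifies $\cT(Y)$ with $\Trop(Y)(\ZZ)$, surjectivity comes from Corollary~\ref{cor:troplift}, and injectivity comes from Proposition~\ref{prop:tropwinding}, since the image of the algebraic loop in $X_*(T)$ is the valuation. Your extra checks are details the paper leaves implicit: that every integral tropical point is realized by a formal loop (via surjectivity of the initial-term map on $\CC$-points), and that $Y\cong\init_0(Y)$ is smooth and connected so that Corollary~\ref{cor:loop-determines-valuation} applies.
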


\begin{proof}
Since every initial variety is connected,
\[
\cT(Y)=\Trop(Y)(\ZZ).
\]
By Corollary \ref{cor:troplift}, the map $\cL_{\trop}$ is surjective,
so it remains to prove injectivity. But this follows by Proposition
\ref{prop:tropwinding}, because the image of an algebraic loop under
\[
[S^1,Y^{\an}]
\longrightarrow
[S^1,T^{\an}]
\cong X_*(T)
\]
is its valuation.
\end{proof}

\begin{remark}
Schön very affine varieties with connected initial varieties are
sometimes called \emph{wunderschön}. Thus, for a wunderschön variety,
algebraic loops are completely classified by integral tropical points.
Equivalently, two formal loops determine the same algebraic loop if
and only if they lie in the same connected component of $LY$.
Complements of linear hyperplane arrangements are wunderschön, as we
will recall in Section~\ref{s:hyparr}.
\end{remark}

\subsection{Tropical compactifications and the ray formula}

In this subsection, we briefly explain how to compute the algebraic loop $\g_f$ attached to $f$ from a tropical compactification. The polyhedral structure of $\Trop(Y)$ is used to identify the relevant boundary meridians, whose powers can be read off from the valuation vector. We use the standard theory of tropical compactifications of subvarieties of tori; see \cite{TevelevCompactifications}. The rays of the tropicalization correspond to
boundary divisors, while the coefficients of a tropical point in a cone record the contact orders of a formal loop with these divisors. Combining this with the meridian description of algebraic loops gives an explicit expression for $\g_f=\cL_{\alg}(f)$ as a product of commuting boundary meridians, which we call the ray formula.

Let $Y$ be a schön very affine variety embedded in its intrinsic torus
$T$, and set
\[
N_{\RR}:=X_*(T)\otimes_{\ZZ}\RR.
\]
Choose a smooth rational polyhedral fan $\Sigma$ in $N_{\RR}$ with
\[
\operatorname{supp}(\Sigma)=\Trop(Y)
\]
such that the closure $\ov Y_\Sigma$ of $Y$ in the associated toric variety $X_\Sigma$ is a smooth proper compactification with simple normal crossings boundary.  We write $\Sigma(1)$ for the set of rays of $\Sigma$. For $\rho\in\Sigma(1)$, let
\[
v_\rho\in X_*(T)
\]
be its primitive generator. For a cone $\sigma\in\Sigma$, we denote by
$\sigma(1)\subset\Sigma(1)$ the set of its rays and by
\[
\cO_\sigma\subset X_\Sigma
\]
the corresponding torus orbit.

For $f\in Y(F)$, let $w_f=\val(f)\in\Trop(Y)(\ZZ)$ and denote by $\sigma_f\in\Sigma$ the unique cone whose relative interior contains $w_f$. Since $\Sigma$ is smooth, there are unique integers $a_\rho(f)>0$, for $\rho\in\sigma_f(1)$, such that
\[
w_f=\sum_{\rho\in\sigma_f(1)}a_\rho(f)v_\rho.
\]

We first relate this decomposition to the boundary of the tropical compactification. The boundary strata of $\ov Y_\Sigma$ are the intersections
\[
\ov Y_\Sigma\cap\cO_\sigma,
\qquad \sigma\in\Sigma,
\]
and tropicality implies
\[
\codim_{\ov Y_\Sigma}
\bigl(\ov Y_\Sigma\cap\cO_\sigma\bigr)
=
\dim\sigma.
\]
In particular, the codimension-one boundary strata correspond to the
rays of $\Sigma$. These strata need not be connected.

\begin{lemma}\label{lem:boundary-components}
The irreducible components of the boundary
\[
D=\ov Y_\Sigma\setminus Y
\]
are naturally parametrized by pairs
\[
(\rho,[c]),
\qquad
\rho\in\Sigma(1),\quad
[c]\in\pi_0(\ov Y_\Sigma\cap\cO_\rho).
\]
If all initial varieties of $Y$ are connected, then every
$\ov Y_\Sigma\cap\cO_\rho$ is connected, and hence the irreducible
components of $D$ are naturally indexed by the rays of $\Sigma$.
\end{lemma}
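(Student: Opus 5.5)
The plan is to use the standard structure of tropical compactifications (Tevelev) and the local product structure of $\ov Y_\Sigma$ near a torus orbit. The boundary is
\[
D=\bigcup_{\rho\in\Sigma(1)}\bigl(\ov Y_\Sigma\cap\ov{\cO_\rho}\bigr),
\]
since $X_\Sigma\setminus T$ is the union of the orbit closures $\ov{\cO_\rho}$. By the codimension formula above, each $\ov Y_\Sigma\cap\cO_\rho$ has pure codimension one. It is dense in $\ov Y_\Sigma\cap\ov{\cO_\rho}$, because the remaining strata $\ov Y_\Sigma\cap\cO_\sigma$ with $\sigma\supsetneq\rho$ have codimension $\dim\sigma\geq 2$. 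Hence the irreducible components of $D$ are exactly the closures of the irreducible components of the various $\ov Y_\Sigma\cap\cO_\rho$, and distinct rays give distinct components because the orbits are disjoint. It therefore suffices to show that each $\ov Y_\Sigma\cap\cO_\rho$ is smooth. Then its irreducible components coincide with its connected components, which gives the parametrization by pairs $(\rho,[c])$.

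For smoothness I would use the standard identification: for $w$ in the relative interior of a cone $\sigma$, there is an isomorphism
\[
\init_w(Y)\cong (\ov Y_\Sigma\cap\cO_\sigma)\times T_\sigma,
\]
where $T_\sigma\cong\Gm^{\dim\sigma}$ is the subtorus with cocharacter lattice spanned by $\sigma$. This follows from the fact that $\init_w(Y)$ is invariant under $T_\sigma$ (the initial ideal is homogeneous for the weights coming from the linear span of $\sigma$, because $w$ lies in the relative interior of a cone of $\Trop(Y)$ on which the initial ideal is constant), and that its quotient by $T_\sigma$ is the intersection with the orbit, obtained as the limit $\lim_{t\to0}t^{w}$ in the affine chart $U_\sigma$. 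The torus action on $\init_w(Y)$ is free, and choosing a splitting of the lattice shows that the quotient is trivial. Since $Y$ is schön, $\init_w(Y)$ is smooth, and therefore so is $\ov Y_\Sigma\cap\cO_\sigma$. Taking $\sigma=\rho$ and $w=v_\rho$ gives $\pi_0(\ov Y_\Sigma\cap\cO_\rho)\cong\pi_0(\init_{v_\rho}(Y))$, because $T_\rho$ is connected.

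The second statement is then immediate. If all initial varieties are connected, then $\init_{v_\rho}(Y)$ is connected, hence so is $\ov Y_\Sigma\cap\cO_\rho$. Being smooth and connected, it is irreducible, so each ray indexes exactly one component of $D$.

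The main obstacle is making the product decomposition $\init_w(Y)\cong(\ov Y_\Sigma\cap\cO_\sigma)\times T_\sigma$ precise in the required generality. I would cite Tevelev, or Helm--Katz and Maclagan--Sturmfels (Lemma 6.4.?), rather than reprove it. If a direct argument is needed, I would work in the affine toric chart $U_\sigma=\Spec\CC[\sigma^\vee\cap M]$. There the closure of $Y$ is cut out by $I\cap\CC[\sigma^\vee\cap M]$, and restricting to the orbit $\cO_\sigma$ (which means setting to zero the characters not in $\sigma^\perp$) picks out precisely the $w$-initial forms, up to multiplication by monomials, for $w\in\operatorname{relint}\sigma$.
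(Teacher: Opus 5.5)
Your proposal is correct and follows essentially the same route as the paper, which cites Helm--Katz (Lemma 3.6) to identify $\init_w(Y)$, for $w$ in the relative interior of $\sigma$, with a torus bundle over $\ov Y_\Sigma\cap\cO_\sigma$, and reads off $\pi_0$ from the connectedness of the torus fibers. The paper dismisses the first assertion as following from the description of the boundary strata, and you spell out what that hides: the codimension-one strata are dense in the boundary components, and the sch\"on hypothesis makes $\ov Y_\Sigma\cap\cO_\rho$ smooth, so its connected and irreducible components coincide. For the density step you should add that every component of $\ov Y_\Sigma\cap\ov{\cO_\rho}$ has codimension one, e.g.\ because $\ov{\cO_\rho}$ is Cartier on the smooth toric variety $X_\Sigma$.
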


\begin{proof}
The first assertion follows from the description of the boundary
strata above. For $w$ in the relative interior of a cone $\sigma$,
\cite[Lemma 3.6]{HK} identifies $\init_w(Y)$ with a torus bundle over
\[
\ov Y_\Sigma\cap\cO_\sigma.
\]
Since the fibers are connected, this induces a bijection
\[
\pi_0(\init_w(Y))
\cong
\pi_0(\ov Y_\Sigma\cap\cO_\sigma).
\]
The second assertion follows.
\end{proof}

Let now $f\in Y(F)$. By properness, $f$ extends uniquely to
\[
\ov f:\Spec\cO\longrightarrow\ov Y_\Sigma,
\]
and its center lies in the stratum
\[
\ov Y_\Sigma\cap\cO_{\sigma_f}.
\]
For each $\rho\in\sigma_f(1)$, let $D_{\rho,f}$ denote the unique
irreducible component of the boundary lying over $\rho$ and containing
the center of $\ov f$, and let $I_{\rho,f}$ be its ideal sheaf near
that point.

\begin{lemma}\label{lem:contact-orders}
With the notation above,
\[
a_\rho(f)
=
\nu_t\bigl(\ov f^*I_{\rho,f}\bigr),
\qquad
\rho\in\sigma_f(1).
\]
Thus the coefficients of $w_f$ with respect to the primitive ray
generators of $\sigma_f$ are precisely the contact orders of $\ov f$
with the corresponding boundary components.
\end{lemma}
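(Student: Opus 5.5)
The plan is to reduce to a local toric computation near the center of $\ov f$. Let $x$ be the center of $\ov f$, which lies in $\ov Y_\Sigma\cap\cO_{\sigma_f}$ by the discussion preceding the lemma. Since $\Sigma$ is smooth, the affine toric chart $U_{\sigma_f}\subset X_\Sigma$ is isomorphic to $\AA^k\times\Gm^{n-k}$ with $k=\dim\sigma_f$. Concretely, extend the primitive generators $\{v_\rho\}_{\rho\in\sigma_f(1)}$ to a $\ZZ$-basis of $X_*(T)$ and take the dual basis $\{\chi_\rho\}\cup\{\chi_j\}$ of $X^*(T)$. The coordinates on $U_{\sigma_f}$ are then the characters $\chi_\rho$ for $\rho\in\sigma_f(1)$, and the toric boundary divisor $\ov{\cO_\rho}$ is cut out by $\chi_\rho=0$ in this chart.

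The first step is to compute the order of vanishing of $\ov f^*\chi_\rho$. By definition of the valuation point,
\[
\val_t(f^*\chi_\rho)=\langle\chi_\rho,w_f\rangle=\sum_{\rho'\in\sigma_f(1)}a_{\rho'}(f)\langle\chi_\rho,v_{\rho'}\rangle=a_\rho(f).
\]
Hence the arc $\ov f$ has contact order $a_\rho(f)$ with the ambient toric divisor $\{\chi_\rho=0\}$.

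The second step compares the ambient divisor with the boundary component $D_{\rho,f}$. This requires showing that near $x$ the restriction of $\chi_\rho$ to $\ov Y_\Sigma$ generates the ideal $I_{\rho,f}$. We will use tropicality and schönness, in the form that $\ov Y_\Sigma$ meets every torus orbit transversally. By \cite[Lemma 3.6]{HK}, $\ov Y_\Sigma\cap U_{\sigma_f}$ is locally (smoothly) a product of $\ov Y_\Sigma\cap\cO_{\sigma_f}$ with the toric factor $\AA^k$. In particular $\chi_\rho|_{\ov Y_\Sigma}$, $\rho\in\sigma_f(1)$, form part of a regular system of parameters at $x$, and their zero loci are the local branches of $D$ through $x$.

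It then remains to match these branches with components. The branch $\{\chi_\rho=0\}\cap\ov Y_\Sigma$ near $x$ is contained in $\ov Y_\Sigma\cap\ov{\cO_\rho}$. Among the components of that intersection, indexed by $\pi_0(\ov Y_\Sigma\cap\cO_\rho)$ as in Lemma \ref{lem:boundary-components}, it lies in exactly the one containing $x$, which is $D_{\rho,f}$. Since the intersection is smooth near $x$, only one component passes through $x$, so this component is well defined. Therefore $I_{\rho,f}=(\chi_\rho)$ locally, and $\nu_t(\ov f^*I_{\rho,f})=\val_t(\ov f^*\chi_\rho)=a_\rho(f)$.

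I expect the main obstacle to be the transversality claim that $\chi_\rho$ is a local generator of $I_{\rho,f}$, rather than some power of it or a function vanishing on several components. The computation in the first step is formal once this is in place. I would try to derive the claim from the Helm–Katz torus-bundle description cited in Lemma \ref{lem:boundary-components}: that description identifies $\ov Y_\Sigma\cap U_{\sigma_f}$ étale-locally with $(\ov Y_\Sigma\cap\cO_{\sigma_f})\times\AA^k$, compatibly with the toric coordinates. Reducedness of the boundary and the SNC hypothesis on $\ov Y_\Sigma$ then rule out the multiplicity issue.
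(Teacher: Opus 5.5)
Your proposal is correct and is essentially the paper's argument in coordinates: the paper applies the toric divisor formula $\operatorname{div}(\chi)=\sum_\rho\langle\chi,v_\rho\rangle D_\rho^{\mathrm{tor}}$ to every character, restricts to $\ov Y_\Sigma$, pulls back along $\ov f$, and uses linear independence of the $v_\rho$; you instead evaluate only on the dual-basis characters $\chi_\rho$, which are local equations for $D^{\mathrm{tor}}_\rho$ on $U_{\sigma_f}$. The multiplicity-one step you isolate, namely $D^{\mathrm{tor}}_\rho|_{\ov Y_\Sigma}=D_{\rho,f}$ near the center, is also used tacitly in the paper's restriction step; it follows from smoothness of the multiplication map $T\times\ov Y_\Sigma\to X_\Sigma$ (schön plus tropical), not from the SNC hypothesis, which by itself does not exclude tangency.
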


\begin{proof}
For $\chi\in X^*(T)$, the toric divisor formula gives
\[
\operatorname{div}_{X_\Sigma}(\chi)
=
\sum_{\rho\in\Sigma(1)}
\langle\chi,v_\rho\rangle D_\rho^{\mathrm{tor}},
\]
where $D_\rho^{\mathrm{tor}}$ is the toric boundary divisor
corresponding to $\rho$; see \cite[Proposition 4.1.2]{CLS}.
Restricting to $\ov Y_\Sigma$ and pulling back along $\ov f$ gives
\[
\nu_t(\chi(f))
=
\sum_{\rho\in\sigma_f(1)}
\nu_t\bigl(\ov f^*I_{\rho,f}\bigr)
\langle\chi,v_\rho\rangle.
\]
On the other hand,
\[
\nu_t(\chi(f))
=
\langle\chi,w_f\rangle
=
\sum_{\rho\in\sigma_f(1)}
a_\rho(f)\langle\chi,v_\rho\rangle.
\]
Since this holds for every $\chi\in X^*(T)$ and the primitive
generators $v_\rho$, $\rho\in\sigma_f(1)$, are linearly independent,
we obtain
\[
a_\rho(f)
=
\nu_t\bigl(\ov f^*I_{\rho,f}\bigr)
\]
for every $\rho\in\sigma_f(1)$.
\end{proof}

We can now express the algebraic loop in terms of boundary meridians.

\begin{prop}[Ray formula]\label{p:rayformula}
Let $f\in Y(F)$ and write
\[
w_f
=
\sum_{\rho\in\sigma_f(1)}
a_\rho(f)v_\rho.
\]
Let $x_f$ be the center of $\ov f$. For each
$\rho\in\sigma_f(1)$, let $\gamma_{\rho,x}$ be a positively oriented
meridian around the boundary component $D_{\rho,f}$, transported to a
common base point $x\in Y^{\an}$.

The meridians $\gamma_{\rho,x}$ may be chosen to commute, and
\[
\cL_{\alg}(f)
=
\left[
\prod_{\rho\in\sigma_f(1)}
\gamma_{\rho,x}^{\,a_\rho(f)}
\right]
\]
as a conjugacy class in $\pi_1(Y^{\an},x)$.
\end{prop}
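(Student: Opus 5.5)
We apply the meridian formula \eqref{eq:meridian} from the proof of Theorem~\ref{thm:Lalg}, using the compactification $X=\ov Y_\Sigma$. Its boundary $D=\ov Y_\Sigma\setminus Y$ is SNC by assumption, so $X$ is an admissible choice. Independence of compactification was proved in the last step of that proof, so we may compute $\cL_{\alg}(f)$ with this $X$.

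\emph{Step 1: local boundary components at the centre.} By the valuative criterion, $f$ extends to $\ov f$ with centre $x_f\in\ov Y_\Sigma\cap\cO_{\sigma_f}$. We must identify the irreducible components of $D$ passing through $x_f$. A point of $\cO_{\sigma_f}$ lies in the toric divisor $D^{\mathrm{tor}}_\rho$ exactly when $\rho\in\sigma_f(1)$. Hence the components of $D$ through $x_f$ lie over rays of $\sigma_f$.

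For each such $\rho$, exactly one component $D_{\rho,f}$ passes through $x_f$. Indeed, two distinct components over the same ray cannot meet at a smooth point of an SNC divisor while both being cut out locally by the same toric coordinate. More precisely, near $x_f$ the restriction of $D^{\mathrm{tor}}_\rho$ to $\ov Y_\Sigma$ is reduced and smooth: by tropicality, $\ov Y_\Sigma$ meets the orbit stratification transversally. So locally it is a single branch. Thus the components of $D$ through $x_f$ are exactly $\{D_{\rho,f}\}_{\rho\in\sigma_f(1)}$, and there are $r=\dim\sigma_f$ of them. This matches the codimension formula $\codim(\ov Y_\Sigma\cap\cO_\sigma)=\dim\sigma$.

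\emph{Step 2: local coordinates and meridians.} Choose SNC coordinates $z_\rho$, $\rho\in\sigma_f(1)$, on a small neighbourhood $U$ of $x_f$ with $D_{\rho,f}\cap U=\{z_\rho=0\}$. Then
\[
U^\circ\cong(\Delta^\times)^r\times\Delta^{n-r},
\]
and $\pi_1(U^\circ,u)\cong\ZZ^r$ is generated by the positive meridians $\mu_\rho$. Pick a path from $u$ to the base point $x\in Y^{\an}$ and set $\gamma_{\rho,x}:=\iota_*\mu_\rho$. These elements commute, because they are images of elements of the abelian group $\pi_1(U^\circ)$. This gives the first claim of the proposition.

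\emph{Step 3: exponents.} Formula \eqref{eq:meridian} gives
\[
\cL_{\alg}(f)=\Bigl[\iota_*\prod_\rho\mu_\rho^{m_\rho}\Bigr],
\]
where $m_\rho$ is the contact order of $\ov f$ with $D_{\rho,f}$. By Lemma~\ref{lem:contact-orders}, $m_\rho=a_\rho(f)$. Changing the path from $u$ to $x$ only conjugates the product, so the conjugacy class is independent of this choice. This proves the ray formula.

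The main obstacle is Step 1: showing that over each ray of $\sigma_f$ exactly one boundary component passes through $x_f$, and that no other components do. I would deduce this from the local structure of tropical compactifications (\cite{TevelevCompactifications}, \cite[Lemma 3.6]{HK}). Near $x_f$, $\ov Y_\Sigma$ is locally a product of $\ov Y_\Sigma\cap\cO_{\sigma_f}$ with the affine toric chart $\AA^r$ of $\sigma_f$, up to smooth change of coordinates. In that chart the toric coordinates restrict to the $z_\rho$.
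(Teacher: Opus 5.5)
Your proof is correct and follows the paper's argument: take $X=\ov Y_\Sigma$, use SNC coordinates at the centre $x_f$ and the meridian formula \eqref{eq:meridian}, get commutativity from the abelian group $\pi_1(U^\circ)$, and read off the exponents from Lemma~\ref{lem:contact-orders}. Your Step 1 also justifies that the boundary components through $x_f$ are exactly the $D_{\rho,f}$, one for each $\rho\in\sigma_f(1)$, which the paper simply takes as part of the setup; note that the transversality you invoke there comes from $Y$ being sch\"on, not from tropicality alone.
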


\begin{proof}
Since $D=\ov Y_\Sigma\setminus Y$ is a simple normal crossings
divisor, there is a sufficiently small analytic neighborhood $\ov U$
of $x_f$ in which the boundary components through $x_f$ are precisely
the $D_{\rho,f}$ for $\rho\in\sigma_f(1)$ and are given by coordinate
hyperplanes. Hence
\[
U:=\ov U\cap Y
\]
is analytically isomorphic to $(\Delta^{\times})^{r} \times \Delta^{n-r}$, so its fundamental group is generated by commuting meridians around the coordinate hyperplanes.

By Lemma \ref{lem:contact-orders}, the contact order of $\ov f$ with $D_{\rho,f}$ is $a_\rho(f)$. The meridian description of the algebraic loop therefore gives, after choosing a base point in $U$,
\[
\g_f
=
\prod_{\rho\in\sigma_f(1)}
\gamma_{\rho,x}^{\,a_\rho(f)}
\]
in $\pi_1(Y^{\an},x)$, up to simultaneous conjugation. Passing to the
conjugacy class proves the formula.
\end{proof}

\section{Linear hyperplane arrangements} \label{s:hyparr}

In this section, we let $V$ be a complex vector space, and $\cA$ a finite arrangement of linear hyperplanes in $V$. We refer to \cite{OrlikTerao} for background on hyperplane arrangements, their intersection lattices, circuits, and complements. We always assume the arrangement to be \emph{essential}, i.e. $\bigcap_{H\in \cA} H = 0$. This condition can always be achieved by passing to the quotient of $V$ by this intersection.

For a linear hyperplane arrangement $(V,\cA)$ we denote by 
\[M = M(V,\cA) := V \setminus \bigcup_{H \in \cA} H \]
the arrangement complement in $V$. We fix for every $H \in \cA$ a linear functional $\a_H\in V^*$ such that $H=\ker(\a_H)$, and we define $T=T(V,\cA) := \Gm^{\cA}$. This determines a closed immersion
\[ M \hookrightarrow \Gm^{\cA}, \, x \mapsto (\a_H(x))_{H\in \cA}.\]
We denote its image by 
\[ Y = Y(V,\cA), \]
to emphasize that $Y$ is embedded into $T$. In this situation, $T$ is identified with the intrinsic torus of the very affine variety $M\cong Y$.

\subsection{Hyperplane valuations and associated filtrations}

In analogy to root valuation functions and root valuation data \cite{GKM}, we make the following definition.

\begin{defn} Let $f\in Y(F)$. We call the valuation vector $w_f\in \Trop(Y)(\ZZ)$ a \emph{hyperplane valuation function}, and think of it as a map
\[ \cA \to \ZZ, \ H \mapsto d_f(H) := (w_f)_{H}. \]
Moreover, we associate to any $w\in \Trop(Y)(\ZZ)$ such a hyperplane valuation function $d=d_{w}$. We will use these points of view interchangeably.
\end{defn}
Our first goal is to describe initial varieties in terms of this function. To that end, we introduce a filtration on $\cA$ determined by a hyperplane valuation function.

\begin{defn} Let $w \in \Trop(Y)(\ZZ)$ with hyperplane valuation function $d$. The descending filtration on $\cA$ defined by
\[ \cA_{> j} := \{H\in\cA \mid d(H) > j \} \]
is called the \emph{valuation filtration} on $\cA$ with respect to $d$. Similarly, the ascending filtration of $V$ defined by 
\[ V_j = \bigcap_{H \in \cA_{>j}} H \] 
is called the \emph{valuation filtration} on $V$ with respect to $d$.
\end{defn}

By definition, $\bigcap_{j} \cA_{>j} = \emptyset$, and therefore the filtration $(V_j)$ is exhaustive. The next lemma follows directly from the construction (and since we assume $(V,\cA)$ is essential).
\begin{lemma}\label{lem:minmax}
Assume $j < \min\{d(H) \mid H\in \cA\}$. Then $\cA_{>j} = \cA$, and in turn $V_j = 0$. Moreover, when $j \ge \max\{d(H) \mid H\in \cA\}$, then $\cA_{>j} = \emptyset$, and in turn $V_j = V$.
\end{lemma}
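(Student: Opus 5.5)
The argument is a direct unwinding of the definitions, using only that $\cA$ is finite and essential. Set $m:=\min\{d(H)\mid H\in\cA\}$ and $M:=\max\{d(H)\mid H\in\cA\}$; both exist because $\cA$ is finite and nonempty. We treat the two regimes of $j$ separately.

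\emph{Case $j<m$.} Every $H\in\cA$ satisfies $d(H)\geq m>j$. By the definition of the valuation filtration, every hyperplane therefore belongs to $\cA_{>j}$, so $\cA_{>j}=\cA$. Consequently
\[
V_j=\bigcap_{H\in\cA_{>j}}H=\bigcap_{H\in\cA}H,
\]
and this intersection is $0$ by the standing essentiality assumption on $(V,\cA)$.

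\emph{Case $j\geq M$.} No $H\in\cA$ satisfies $d(H)>j$, since $d(H)\leq M\leq j$. Hence $\cA_{>j}=\emptyset$. We then use the convention that the intersection of the empty family of subspaces of $V$ is $V$ itself, which gives $V_j=V$.

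There is no real obstacle here. The only point needing care is to state explicitly the convention for the empty intersection, and to note that essentiality is used precisely in the first case.
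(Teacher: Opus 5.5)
Your proof is correct and takes the same route as the paper, which simply says the lemma follows directly from the definitions of $\cA_{>j}$ and $V_j$ together with the essentiality of $(V,\cA)$. Stating the empty-intersection convention explicitly for the case $V_j=V$ is a sensible addition.
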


\begin{remark} In the arrangement complement case, $f$ has an explicit description. It is an $F$-point of $V$ which avoids all hyperplanes, i.e. it is a Laurent series
\[ f = \sum_{j \in \ZZ, j \ge k} x_j t^{j} \]
such that $\alpha_{H}\circ f \in \AA^1(F)=F$ is non-zero for every $H\in \cA$. We sometimes call such a Laurent series a \emph{generically regular} Laurent series in $V$. We can describe the filtration $\cA_{>j}$ determined by $d_f$ in terms of the coefficients of $f$ as follows
\[ \cA_{> j} = \{H\in \cA \mid \alpha_{H}(x_{i}) = 0 \textup{ for all} i \le j \}. \]
For the ascending filtration on $V$ we have the description
\[ V_j = \bigcap_{\alpha_{H}(x_{i}) = 0 \textup{ for all} i \le j} \ker(\a_{H}).\]
\end{remark}

In the situation of a hyperplane arrangement complement, the valuation strata have a completely explicit description in terms of the valuation filtrations. We record it in the following, the proof is immediate.

\begin{lemma}
For $w\in \Trop(Y)(\ZZ)$ and for $j\in \ZZ$, define $\cA_{j} = \{V_j\cap H \mid H\in \cA, d(H)=j \}$. This is a hyperplane arrangement in the subspace $V_j$, and we define 
\[M_{j} := M(V_j,\cA_{j}) = V_{j} \setminus \bigcup_{H\in \cA_{j}} (H \cap V_{j}) \subset V.\]
Then the scheme $\prod_{j \in \ZZ} M_j$ is isomorphic to the valuation stratum $L_wY$, and a formal loop $f = \sum x_jt^{j}\in Y(F)$ has hyperplane valuation function $w$ if and only if $x_j \in M_j$ for all $j \in \ZZ$. 
\end{lemma}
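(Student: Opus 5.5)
The plan is to unwind the definition of $L_wY$ coordinatewise. We write a formal loop as $f=\sum_{j}x_jt^j$ with $x_j\in V\otimes R$ (for an arbitrary $\CC$-algebra $R$), and compute $\alpha_H(f)=\sum_j \alpha_H(x_j)t^j$. By Theorem~\ref{thm:tropical-fiber-main}(1) and Lemma~\ref{lem:grobner-extension}, $f\in L_wY(R)$ if and only if $t^{-w}f\in T(R[[t]])$, i.e. for every $H\in\cA$ we have $t^{-d(H)}\alpha_H(f)\in R[[t]]^\times$. This condition says two things.

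First, $\alpha_H(x_j)=0$ for all $j<d(H)$. Second, $\alpha_H(x_{d(H)})\in R^\times$, since a power series is a unit exactly when its constant term is.

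We then translate these conditions into the filtration language. The vanishing conditions, taken over all $H$, say that $x_j$ is killed by every $\alpha_H$ with $d(H)>j$. This means exactly $x_j\in V_j=\bigcap_{H\in\cA_{>j}}H$ (scheme-theoretically, $x_j\in V_j\otimes R$). Given this, the unit condition for those $H$ with $d(H)=j$ says that the restriction $\alpha_H|_{V_j}$ is invertible at $x_j$. That is precisely $x_j\in M(V_j,\cA_j)(R)$, where $M(V_j,\cA_j)$ is the basic open of $V_j$ on which the product of these restricted forms is invertible.

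We must also check that each $V_j\cap H$ with $d(H)=j$ is a genuine hyperplane in $V_j$, i.e. $V_j\not\subset H$. This holds because $w\in\Trop(Y)$: there is a $\CC$-point $f$ with valuation $w$, and its coefficient $x_j$ lies in $V_j$ with $\alpha_H(x_j)\neq0$. The only step needing care is to remark that $\cA_j$ may contain repeated hyperplanes, which does not matter.

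The product $\prod_j M_j$ is effectively a finite product of varieties times infinitely many copies of $V_j=V$ for large $j$. For $j<\min d$ we have $V_j=0$ by Lemma~\ref{lem:minmax}, and for $j\ge\max d$ we have $\cA_j=\emptyset$ and $M_j=V$. So the functor $R\mapsto\{(x_j)\}$ is represented by an affine scheme, and the map $f\mapsto(x_j)_j$ is a natural bijection $L_wY(R)\cong\prod_jM_j(R)$ compatible with base change. Yoneda then gives the isomorphism, and the statement about $\CC$-points is the special case $R=\CC$.

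There is essentially no obstacle. The only mildly delicate point is doing the unit and vanishing analysis over an arbitrary $R$ rather than over a field, which the constant-term criterion for units in $R[[t]]$ handles.
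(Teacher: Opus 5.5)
Your proposal is correct and is the coefficientwise unwinding the paper has in mind; the paper just calls the proof immediate. You reduce via Lemma~\ref{lem:grobner-extension} to $t^{-d(H)}\alpha_H(f)\in R[[t]]^\times$ and read off the vanishing and unit conditions on the $x_j$. Two small corrections: $\cA_j=\emptyset$ and $M_j=V$ hold only for $j>\max d$, not for $j=\max d$. Also, the nondegeneracy $V_{d(H)}\not\subset H$ can be checked directly rather than through the existence of an $F$-point: otherwise $\alpha_H$ would lie in the span of $\{\alpha_{H'}\mid d(H')>d(H)\}$, so some circuit would have the minimum of $d$ attained only at $H$, which contradicts $w\in\Trop(Y)$.
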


Even though the valuation stratum $L_wY$ is infinite-dimensional, the next lemma shows that its homotopy type is that of a finite product of arrangement complements appearing between the minimal and maximal valuation levels.

\begin{lemma} Let $m_1 = \min\{d(H) \mid H\in \cA\}$ and $m_2=\max\{d(H) \mid H\in \cA\}$, and call \[\Adm(w) := \prod_{j = m_1}^{m_2} M_j \]
the \emph{admissible deformation space} of $w$. Then we have
\[L_wY \cong \Adm(w) \times \prod_{j > m_2} V,\]
i.e. $L_wY$ is a product of the admissible deformation space, which is a product of finitely many linear hyperplane arrangements, and an infinite-dimensional affine space. 
\end{lemma}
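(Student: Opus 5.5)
The plan is to deduce this directly from the previous lemma, which identifies $L_wY$ with $\prod_{j\in\ZZ}M_j$ via $f=\sum_j x_jt^j\mapsto (x_j)_j$. So the task reduces to identifying the factors $M_j$ outside the range $m_1\le j\le m_2$.

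First I would handle $j<m_1$. By Lemma \ref{lem:minmax}, for such $j$ we have $V_j=0$. Hence $M_j$ is the single point $\{0\}$, and the arrangement $\cA_j$ is empty because no hyperplane has $d(H)=j<m_1$. These factors therefore contribute a point each, which is consistent with $x_j=0$ for $j<m_1$.

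Next I would handle $j>m_2$. Again by Lemma \ref{lem:minmax}, $V_j=V$, since $j\ge m_2$. Moreover $\cA_j=\emptyset$, because no $H$ has $d(H)=j>m_2$. Hence $M_j=V$.

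For $m_1\le j\le m_2$, each $M_j$ is by definition the complement of the finite arrangement $\cA_j$ in $V_j$; these factors give $\Adm(w)$. Combining the three ranges yields
\[
L_wY\cong\prod_{j<m_1}\{0\}\times\prod_{j=m_1}^{m_2}M_j\times\prod_{j>m_2}V\cong \Adm(w)\times\prod_{j>m_2}V.
\]

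The only point needing care is that this is an isomorphism of schemes, not merely a bijection on $\CC$-points. It holds because the identification in the preceding lemma is functorial in $R$: an $R$-point is a sequence $(x_j)$ with $x_j\in M_j(R)$. Infinite products of affine schemes are affine, with coordinate ring the colimit tensor product, so the factorization is compatible with these ring structures. I would also check that the factors with $j<m_1$ are the reduced point $\Spec\CC$, since $V_j=0$ as a linear subspace; this is immediate. I do not expect a genuine obstacle; the proof is bookkeeping once the previous lemma is granted.
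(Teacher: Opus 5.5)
Your proposal is correct and follows the same route as the paper. Both arguments reduce to the preceding lemma $L_wY\cong\prod_{j}M_j$ and use Lemma~\ref{lem:minmax} to see that $M_j$ is a point for $j<m_1$ and $M_j=V$ for $j>m_2$; your observation that this identification holds functorially on $R$-points, so that it is an isomorphism of schemes, is a welcome detail the paper leaves implicit.
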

\begin{proof} This follows immediately from Lemma \ref{lem:minmax}: when $j < m_1$, then $V_j = 0$, and when $j > m_2$, then  $\cA_{j} = \emptyset$. The second statement follows from the definition of $M_j$.
\end{proof}

For $f \in Y(F)$, as long as we deform its coefficients inside $L_wY$, its hyperplane valuation function remains unchanged. In the case of irregular classes, the terminology admissible deformation space appears already in \cite{DRT}. Recall that we have an initial term map $L_wY \to \init_{w}Y$, which in the schön case is a locally trivial affine space bundle. We will describe this map explicitly in the next subsection. 

\subsection{Initial varieties and hyperplane valuation functions}

Let $w\in \Trop(Y)(\ZZ)$. We compute the initial variety $\init_{w}(Y)$ in terms of the valuation filtrations introduced above. Consider the ascending valuation filtration $(V_j)$ on $V$ determined by $w$, and define
\[ \gr_{w} V = \bigoplus_{j\in \ZZ} V_{j}/V_{j-1}.\]
We will show that $\init_{w}(Y)$ decomposes as a ``graded arrangement complement'' in $\gr_{w}(V)$, for arrangements in the associated graded pieces of the filtration $(V_j)$. 
\begin{defn}
Let $\alpha\in V^*$ be a linear form. Then we define the \emph{initial form} of $\a$ with respect to $w$ as follows. Let $j(\alpha)$ be the smallest index for which $\a$ does not vanish identically on $V_{j}$. When $\a = \a_H$, then $j(\a) = d(H)$. Then $\a$ vanishes on $V_{j(\alpha)-1}$, hence induces a linear form on $V_{j(\alpha)}/V_{j(\alpha)-1}$. The composition of this linear form with $\pr_j(\alpha) : \gr_{w}V \to V_{j(\alpha)}/V_{j(\alpha)-1}$ is
\[\init_{w}(\a)\in (\gr_w V)^*.\]
The hyperplanes $\ker(\init_{w}(\a_H)) \subset \gr_{w}V$ define a hyperplane arrangement in $\gr_w V$, denoted by $\gr_w \cA$. We call it the \emph{graded arrangement} for $w$. We write $\gr_{w}M := M(\gr_{w}V,\gr_{w}\cA)$ for the complement of the graded arrangement in $\gr_w V$.
\end{defn}

\begin{prop} \label{prop:initgr}
The map 
\begin{align*} 
\Phi_{w} : \gr_w M 	&\to \init_{w}(Y) \subset T  \\
		v		&\mapsto (\init_{w}(\a_{H}))_{H \in \cA},
\end{align*}
is a well-defined isomorphism of varieties. In other words, the initial variety of a hyperplane arrangement complement at $w$ is the complement of the graded arrangement associated with the valuation filtration.
\end{prop}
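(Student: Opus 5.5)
The plan is to reduce to the linear algebra of the Gröbner degeneration. First, choose a splitting of the filtration. Pick complements $W_j$ with $V_j = V_{j-1}\oplus W_j$, so that $V=\bigoplus_j W_j$ and we get an identification $\gr_w V\cong\bigoplus_j W_j$. For $H\in\cA$ with $d(H)=j$, the form $\a_H$ vanishes on $W_i$ for $i<j$, because $W_i\subset V_{i}\subset V_{j-1}$. Its restriction to $W_j$ is exactly $\init_w(\a_H)$ under this identification. Now consider the $\Gm$-action on $V$ by $t\cdot v=\sum_j t^j v_j$, where $v=\sum_j v_j$ with $v_j\in W_j$. Then
\[
t^{-d(H)}\a_H(t\cdot v)=\init_w(\a_H)(v)+\sum_{i>d(H)}t^{\,i-d(H)}\a_H(v_i).
\]
This defines a morphism $\widetilde\Phi:\gr_w V\times\AA^1\to \AA^{\cA}\times\AA^1$. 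Over $t\neq 0$ it restricts to the map $(v,t)\mapsto(t^{-w}\a(t\cdot v),t)$, which lands in $\cY^\circ$ precisely when $v\in M$. Its fiber over $t=0$ is $v\mapsto(\init_w(\a_H)(v))_H$.

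Second, show that the family is an isomorphism onto $\cY_w$. Let $\mathcal M\subset\gr_wV\times\AA^1$ be the open set where every $t^{-d(H)}\a_H(t\cdot v)$ is nonzero. Over $\Gm$, $\widetilde\Phi|_{\mathcal M}$ is the isomorphism $M\times\Gm\cong\cY^\circ$ composed with the automorphism $v\mapsto t\cdot v$. Since $\cA$ is essential, the forms $\a_H$ span $V^*$. I expect the forms $\init_w(\a_H)$ to span $(\gr_wV)^*$ as well: if $v\in W_j$ is killed by all $\init_w(\a_H)$ with $d(H)=j$, then it is killed by every $\a_H$ with $d(H)\ge j$, so it lies in $V_{j-1}\cap W_j=0$. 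Hence $\widetilde\Phi$ is a closed immersion of $\gr_wV\times\AA^1$ into $\AA^{\cA}\times\AA^1$, since it is a linear map over $\AA^1[t]$ whose leading term is injective. I would argue this via a triangular change of coordinates with respect to the grading. Consequently $\mathcal M$ maps isomorphically onto a closed subscheme of $T\times\AA^1$. That subscheme is flat over $\AA^1$ because $\mathcal M$ is an open subset of affine space times $\AA^1$, hence an integral scheme dominating $\AA^1$. It agrees with $\cY^\circ$ over $\Gm$. Since $\cY_w$ is the scheme-theoretic closure of $\cY^\circ$ and the image is closed and $t$-torsion free, the image equals $\cY_w$.

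Third, take the special fiber. This gives $\init_w(Y)\cong\mathcal M_0=\gr_wM$ via $\Phi_w$. This also shows that the map is well defined, i.e. that it lands in $T$ and in $\init_w(Y)$.

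The main obstacle will be the closed-immersion/closure step. One has to check that the image of $\mathcal M$ is closed in $T\times\AA^1$ and not merely locally closed, and that $\widetilde\Phi$ is a closed immersion uniformly in $t$. If the triangular argument is awkward, I will instead verify directly that $\init_w(I)$ is generated by the initial forms of the linear circuit relations among the $\a_H$. The point is that these relations form a universal Gröbner basis for linear ideals, and their $w$-initial forms are exactly the linear relations among the $\init_w(\a_H)$, which cut out $\Phi_w(\gr_wV)$.
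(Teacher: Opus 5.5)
Your argument is correct, but it takes a different route from the paper.

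\textbf{The paper's route.} It works entirely with ideals:
\begin{itemize}
\item It uses the fact that circuit polynomials form a universal Gr\"obner basis, so $\init_w(I_{\mathrm{circ}})$ is generated by the $\init_w(\ell_C)$.
\item Lemma~\ref{lem:initform} then makes $x_H\mapsto\init_w(\a_H)$ a well-defined map $\varphi_w:\CC[T]/\init_w(I_{\mathrm{circ}})\to\CC[\gr_wM]$.
\item Surjectivity is essentiality of the graded arrangement, which is your spanning argument.
\item Injectivity comes from strictness of the filtered surjection $\CC[x_H\mid H\in\cA]\to\CC[V]$. This gives $\ker(\gr f)=\gr(\ker f)=\init_w(\ker f)$, followed by localization.
\end{itemize}

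\textbf{Your route.} You build the whole degeneration explicitly. The weighted action $t\cdot v=\sum_j t^jv_j$ gives a family $\mathcal M\subset\gr_wV\times\AA^1$, and you identify $\widetilde\Phi(\mathcal M)$ with $\cY_w$ through the closure description.

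This avoids both the universal Gr\"obner basis input and the strictness lemma, and it yields more. You get a global isomorphism $\cY_w\cong\mathcal M$ with an open subset of the trivial family $\gr_wV\times\AA^1$, hence smoothness of $\cY_w\to\AA^1$. Your $t\cdot v$ is exactly the weighted lift $\widetilde u(z)=\sum_j z^{d_j}u_j$ that the paper later uses in the proof of Theorem~\ref{thm:braidrep}.

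The price is reliance on two standard facts: $\cY_w$ is the scheme-theoretic closure of $\cY^\circ$, and its fiber over $0$ is $\init_w(Y)$ as a scheme. The paper instead computes $\init_w(I_{\mathrm{circ}})$ directly. Your fallback plan is close to the paper's proof.

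\textbf{Three small corrections.}
\begin{enumerate}
\item Over $t\neq0$, the image lands in $\cY^\circ$ exactly when $t\cdot v\in M$, not when $v\in M$, since the weighted action does not preserve $M$. Your definition of $\mathcal M$ already handles this correctly.
\item Injectivity of the leading term alone does not give a closed immersion for all $t$. The block-triangular argument does. Choose $S_j\subset\{H: d(H)=j\}$ such that $\{\init_w(\a_H)\}_{H\in S_j}$ is a basis of $W_j^*$, and let $S=\bigcup_j S_j$. The square submatrix indexed by $S$ is block upper triangular with constant invertible diagonal blocks. Its determinant is therefore a nonzero constant, so $\gr_wV\times\AA^1\to\AA^S\times\AA^1$ is an isomorphism, and $\widetilde\Phi$ is a closed immersion.
\item The step you flag as the main obstacle is immediate. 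Since $\mathcal M=\widetilde\Phi^{-1}(T\times\AA^1)$, the map $\mathcal M\to T\times\AA^1$ is the base change of a closed immersion along an open immersion, hence itself a closed immersion.
\end{enumerate}
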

Before we prove this statement, we need to recall the notion of circuit to describe the defining equations of $Y$ and $\init_{w}(Y)$. 
\begin{defn}
A subset $C \subset \cA$ is called a \emph{circuit} if there is a linear relation $\sum_{H\in C} \l_H\a_H = 0$ in $V^*$ with $\l_H \in \CC^{\times}$ and $C$ is minimal with this property. For a circuit $C$, a \emph{circuit relation} is a nonzero relation
\[
\sum_{H\in C}\lambda_H\alpha_H=0
\]
with \(\lambda_H\in\mathbb C^\times\) for all \(H\in C\). Such a relation is unique up to multiplication by a scalar. The corresponding \emph{circuit polynomial} is
\[
\ell_C := \sum_{H\in C}\lambda_H x_H \in \CC[T],
\]
where \(x_H\) denotes the coordinate on the \(H\)-th factor of
\(T=\mathbb G_m^{\cA}\). The \emph{circuit ideal} is the ideal
\[
I_{\mathrm{circ}} := (\ell_C \mid C \subset \cA \text{ a circuit}) \subset \CC[T].
\]
\end{defn}
It is a standard result that the defining ideal of the hyperplane complement $Y$ in its intrinsic torus $T = \Gm^{\cA}$ is the circuit ideal $I_{\mathrm{circ}}$. 
\begin{lemma} \label{lem:initform}
Let $C$ be a circuit in $\cA$ with circuit polynomial $\ell_C = \sum_{H\in C}\lambda_H x_H$. Moreover, let $m_{C} = \min_{H \in C} \{d(H)\}$. Then the initial form of $\ell_C$ is described by 
\[\init_{w}(\ell_C) = \sum_{H \in C, d(H) = m_C} \l_H x_H,\] 
and we have  
\[\sum_{H \in C, d(H) = m_{C}} \l_H \init_w(\alpha_H) = 0\] 
for the initial forms of $\a_H$. In other words, the initial forms $\init_{w}(\a_H)$ satisfy the initial forms of circuit relations.
\end{lemma}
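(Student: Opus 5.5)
The first claim follows directly from the definition of $a_w$. The monomial $x_H$ is the character of $T=\Gm^{\cA}$ indexed by $H$, and its $w$-weight is $\langle e_H,w\rangle=d(H)$. Every coefficient $\lambda_H$ of $\ell_C$ is nonzero, so
\[
a_w(\ell_C)=\min_{H\in C} d(H)=m_C,
\]
and $\init_w(\ell_C)$ is the sum of the terms $\lambda_H x_H$ with $d(H)=m_C$.

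For the second claim, let $j:=m_C$ and $C_0:=\{H\in C\mid d(H)=j\}$. I will show that the relation $\sum_{H\in C}\lambda_H\alpha_H=0$, restricted to the subspace $V_j$, becomes $\sum_{H\in C_0}\lambda_H\,\alpha_H|_{V_j}=0$. By definition $V_j=\bigcap_{d(H')>j}H'$, so every $\alpha_H$ with $d(H)>j$ vanishes on $V_j$. Every $H\in C$ has $d(H)\ge j$. Hence restricting the circuit relation to $V_j$ kills all terms except those with $d(H)=j$, which gives the claimed identity on $V_j$.

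Next, each $\alpha_H$ with $H\in C_0$ vanishes on $V_{j-1}$, so $j(\alpha_H)=d(H)=j$. I take this from the statement in the definition of initial forms; it can be checked directly, since $V_{j-1}\subset H$ because $d(H)>j-1$. Each such $\alpha_H$ therefore descends to a form on $V_j/V_{j-1}$. The identity on $V_j$ then descends to $\sum_{H\in C_0}\lambda_H\,\overline{\alpha_H}=0$ in $(V_j/V_{j-1})^*$. Composing with the projection $\pr_j:\gr_wV\to V_j/V_{j-1}$ gives $\sum_{H\in C_0}\lambda_H\init_w(\alpha_H)=0$, since all these initial forms are pulled back along the same projection.

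The only point needing care is that every $H\in C_0$ gives the same index $j(\alpha_H)=j$, so that all the initial forms live on a single graded piece and are pulled back by the same $\pr_j$. This is exactly what the choice $j=m_C$ guarantees. The rest is bookkeeping with the filtration.
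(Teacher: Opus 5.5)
Your proof is correct and follows the paper's argument. The first claim comes from the $w$-weights $d(H)$ of the monomials $x_H$. The second comes from restricting the circuit relation to $V_{m_C}$, where every $\alpha_H$ with $d(H)>m_C$ vanishes, and then descending to $V_{m_C}/V_{m_C-1}$.

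One small correction to your aside: vanishing on $V_{j-1}$ only gives $j(\alpha_H)\ge d(H)$. The reverse inequality needs $V_{d(H)}\not\subset H$, i.e.\ that $\cA_{>d(H)}$ is a flat, which is where $w\in\Trop(Y)$ is used. So it is your appeal to the definition of initial forms (as in the paper), not the direct check, that carries this step.
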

\begin{proof} The first statement is the standard description of initial forms. For the second description note that we have a circuit relation 
\[ \sum_{C} \l_H \a_H = 0.\]
Restricting this relation to $V_{m_{C}}$ kills all $\a_H$ with $d(H) > m_{C}$, and since the remaining $\a_H$ descend to $V_{m_{C}}/ V_{m_{C}-1}$, the claim follows from the definition of $\init_w(\a_H)$.
\end{proof}

\begin{proof}[Proof of Proposition \ref{prop:initgr}] We use the fact that the set of circuits determines a universal Gröbner base \cite[Proposition 1.6]{S}. Explicitly, this means that $\init_w(I_{\mathrm{circ}})$ is the ideal generated by all $\init_{w}(\ell_{C})$. We will use this at several points in the proof. Then Lemma \ref{lem:initform} shows that the map on coordinate rings
\[ \CC[T] \to \CC[\gr_w M], x_H \mapsto \init_{w}(\a_H) \]
factors through
\[ \varphi_w : \CC[T]/\init_{w}(I_{\mathrm{circ}}) \to \CC[\gr_w M] \]
and the map $\Phi_w$ is the corresponding map of varieties, which is therefore well-defined. We prove that $\varphi_w$ is an isomorphism. 

First we check surjectivity. For this we need to prove that the forms $\init_{w}(\a_H)$ generate $\CC[\gr_w M]$. Since $\gr_w M$ is precisely the complement of the hyperplane arrangement given by the $\init_{w}(\a_H)$, it suffices to prove that 
\[\mathrm{span}\{ \init_{w}(\a_H) \mid H \in \cA \} = (\gr_w V)^*.\] 
In other words, we need to prove that the graded arrangement is essential. For this we observe that
\[\bigcap_{d(H) = j} \ker(\init_{w}(\a_H)) = 0, \]
where $\init_{w}(\a_H)$ is understood as functional on $V_j / V_{j-1}$. This follows by definition of $V_j = \bigcap_{d(H) > j} H$. Indeed, if $v \in V_j$ with $\init_{w}(\a_H)(v) = \a_H(v) = 0$ for all $\a_H$ with $d(H) = j$, then by definition $v\in V_{j-1}$. This proves that $\init_{w}(\a_H)$ with $d(H) = j$ generate $(V_j/V_{j-1})^*$, hence $\gr_w(V)^* = \bigoplus_{j} (V_j/V_{j-1})^*$ is spanned by the forms $\init_{w}(\a_H)$. We conclude that $\varphi_{w}$ is surjective. 

We now check injectivity, in other words we prove that $\init_{w}(I_{\mathrm{circ}}) = \ker(\varphi_w)$. For this let $A:=\CC[x_H, H \in \cA]$ and consider the map
\[ f : A \to \CC[V],  x_H \mapsto \a_H. \]
Note that $w$ induces a grading on $A$ by letting $x_H$ have degree $d(H)$. Consider the induced descending filtration $F_{\ge m}A = \bigoplus_{j \ge m} A_j$. Then for $f \in A$ with minimum weight $m_0$, its class in $F_{\ge m_0}A / F_{\ge m_0+1} A$ is exactly its $w$-initial form. Moreover, for the associated graded $\gr A$ we have $\gr A \cong A$ as graded $\CC$-algebras.

On the other hand, we have
\[
\CC[V]=\Sym(V^\ast).
\]
The ascending filtration \(V_j\) on \(V\) induces a descending filtration on \(V^\ast\) by
\[
F_{\ge j}V^*:=\mathrm{Ann}(V_{j-1}),
\]
and hence a multiplicative descending filtration on \(\CC[V]\). For every \(j\), we have
\[
V_{j-1} = \bigcap_{d(H)\ge j}\ker(\alpha_H),
\]
and therefore
\[
F_{\ge j}V^* = \mathrm{Ann}(V_{j-1}) = \mathrm{span}\{\alpha_H\mid d(H)\ge j\}.
\]
It follows that the surjection
\[
f:A=\CC[x_H\mid H\in\mathcal A]\longrightarrow \CC[V], \qquad x_H\longmapsto \alpha_H,
\]
is strict with respect to the filtrations defined above. Hence passage to associated graded preserves its kernel, i.e. $\ker(\gr f)=\gr(\ker f)$. 

Under the natural identifications $\gr A\cong A$ and $\gr_w\CC[V]\cong \CC[\gr_w V]$,  the induced map 
\[\psi := \gr(f) :A\longrightarrow \CC[\gr_w V]\]
sends \(x_H\) to \(\init_w(\alpha_H)\), and $\gr(\ker f)=\init_w(\ker f)$.
Thus $\ker(\psi)=\init_w(\ker f)$. Now $\ker(f)$ is generated by the circuit polynomials in \(A\). Localizing \(\psi\) at the multiplicative system generated by the variables \(x_H\) gives \(\varphi_w\). Since localization is exact, we obtain $\ker(\varphi_w)=\init_w(I_{\mathrm{circ}})$.
\end{proof}
\begin{cor}
Under the identification $\Phi_{w}$, the initial term map $L_wY \to \init_{w}(Y)$ is identified with the natural map $\prod_j M_j \to \gr_w M$ induced from the projection $V_j \to V_j / V_{j-1}$. Moreover, it factors through
\[ \pi : \Adm(w) \to \gr_w M,\] 
which is an affine space bundle (that is globally trivializable). In particular, this map induces an isomorphism
\[ \pi_{1}(\Adm(w)^{\an}, x)  \to \pi_{1}(\init_{w}(Y)^{\an},\pi(x)). \]
\end{cor}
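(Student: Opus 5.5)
We will make the initial-term map explicit by combining the description of $L_wY$ as $\prod_j M_j$ with the isomorphism $\Phi_w$ of Proposition~\ref{prop:initgr}, and then read off the factorization and the affine bundle structure from linear algebra.

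\emph{Identifying the map.} Let $f=\sum_j x_jt^j\in L_wY(R)$, so $x_j\in M_j(R)$ and in particular $x_j\in V_j\otimes R$. The renormalized loop $t^{-w}f$ has $H$-coordinate $t^{-d(H)}\alpha_H(f)=\sum_j\alpha_H(x_j)t^{j-d(H)}$. For $j<d(H)$ we have $x_j\in V_j\subset V_{d(H)-1}\subset H$, so $\alpha_H(x_j)=0$, and reduction modulo $t$ gives the coordinate $\alpha_H(x_{d(H)})$. Since $x_{d(H)}\in V_{d(H)}$, this equals $\init_w(\alpha_H)$ evaluated on the class $\bar x_{d(H)}\in V_{d(H)}/V_{d(H)-1}$. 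Setting $\bar x:=(\bar x_j)_j\in\gr_wV$, we get $\init_w(f)=\Phi_w(\bar x)$. This is exactly the map induced by the projections $V_j\to V_j/V_{j-1}$, which proves the first claim. The condition $x_j\in M_j$ guarantees that $\init_w(\alpha_H)(\bar x)\neq 0$, so $\bar x\in\gr_wM$ and the map lands in the right place.

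\emph{Factorization and bundle structure.} By Lemma~\ref{lem:minmax}, only the indices $m_1\le j\le m_2$ contribute to $\gr_wV$, so the map factors through the projection $L_wY\cong\Adm(w)\times\prod_{j>m_2}V\to\Adm(w)$, giving $\pi$. To see that $\pi$ is a trivial affine bundle, choose for each $j$ a linear complement $C_j$ with $V_j=V_{j-1}\oplus C_j$, which identifies $C_j\cong V_j/V_{j-1}$.

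The key point is that membership in $M_j$ depends only on the class modulo $V_{j-1}$. Indeed, for $H$ with $d(H)=j$ we have $V_{j-1}\subset H$, so $x_j\notin H$ if and only if $\bar x_j\notin\ker\init_w(\alpha_H)$. Hence $M_j\cong V_{j-1}\times(\text{complement in }C_j)$, and
\[
\Adm(w)\cong\gr_wM\times\prod_{j=m_1}^{m_2}V_{j-1}.
\]
Under this isomorphism $\pi$ becomes the first projection.

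\emph{Fundamental groups.} The fibers of $\pi$ are contractible complex vector spaces, so $\pi$ is a homotopy equivalence and therefore induces an isomorphism on $\pi_1$.

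The only delicate step is the vanishing $\alpha_H(x_j)=0$ for $j<d(H)$, which is needed for the reduction modulo $t$. It follows directly from the containments $V_j\subset V_{d(H)-1}\subset H$, and everything else is routine linear algebra.
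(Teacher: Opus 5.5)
Your proposal is correct and follows essentially the same route as the paper. The paper also trivializes $\pi$ by choosing splittings $V_j\cong V_{j-1}\oplus V_j/V_{j-1}$, observing that $M_j$ is the preimage of the graded complement, and deduces the $\pi_1$-isomorphism from contractibility of the fibers together with Proposition~\ref{prop:initgr}. The paper treats the identification of the initial-term map as an unravelling of definitions; your explicit computation of $t^{-w}f \bmod t$, using $V_j\subset V_{d(H)-1}\subset H$ for $j<d(H)$, fills in that step.
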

\begin{proof}
The first two claims follow from unravelling definitions. To see that $\pi$ is also an affine space bundle, for each $j$, one checks that the projection $V_j\to V_j/V_{j-1}$ restricts to a map
\[ M_j\to M(V_j/V_{j-1},\mathcal A_j), \]
whose fibers are affine spaces isomorphic to $V_{j-1}$. The bundle can be trivialized by picking splittings $V_j \cong V_{j-1} \oplus V_j/V_{j-1}$. The statement about fundamental groups is then immediate from Proposition \ref{prop:initgr}, because the fibers are contractible.
\end{proof}

\begin{remark} The set $\Trop(Y)$ has a natural structure of a polyhedral complex, induced from Gröbner degeneration. It is defined in such a way that the initial variety $\init_{w}(Y)$ depends only on the cone $\sigma$ containing $w$ in its relative interior. 

When $w \in \Trop(Y)(\ZZ)$ has all negative hyperplane valuations, and $(V,\cA)$ is the complexification of a real reflection arrangement, the fundamental group $\pi_1(\init_{w}(Y)^{\an},c)$ is identified with $\pi_{1}(\Adm(w)^{\an}, x)$ (for any $x$ lying over $c$), which in the terminology of \cite{DRT} is a \emph{pure local wild mapping class group}. 

This gives an alternate definition of such groups, which is intrinsic to the root hyperplane arrangement, and it depends only on a cone in the polyhedral structure on $\Trop(Y)$ induced from Gröbner degenerations. In fact, the intrinsic underlying object is the infinite-dimensional valuation stratum $L_wY$.

Moreover, when $(V,\cA)$ is the type $A$ reflection arrangement, it is well-known that its tropicalization coincides with the space of equidistant phylogenetic trees, cf.~\cite[\S 4.3]{MS}. Integral points with only negative hyperplane valuations correspond exactly to fission trees as defined in \cite{Boalch1}. The tropical framework thus intrinsically reconstructs known invariants of irregular classes.
\end{remark}

\subsection{Standard loops in $\init_{w}(Y)$}

Let $w\in \Trop(Y)(\ZZ) = X_*(T) \cap \Trop(Y)$. Following Theorem~\ref{thm:initialloop}, we further describe the loop 
\[\Pi_{w} : S^1 \to \init_{w}(Y)^{\an} , \ \th \mapsto (e^{i\th})^{w}c= ( e^{w_H i \th} c_H)_{H\in \cA},\]
for some $c\in \init_{w}(Y)^{\an}$. Recall that $w$ determines a hyperplane valuation function $d : \cA \to \ZZ$, and write $\{d(H) \mid H\in \cA\} = \{d_1,...,d_r\}$. Then 
\[ \gr_{w} V = \bigoplus_{k = 1}^{r} V_{d_{k}} / V_{d_{k}-1}, \]
as all other graded pieces vanish. Let 
\[M(k) := M(V_{d_k} / V_{d_{k}-1}, \ov{\cA}_{d_{k}} ) \]
where $\ov{\cA}_{j}$ is the arrangement consisting of all images of hyperplanes $V_j\cap H \in \cA_{j} = \{V_j\cap H \mid H\in \cA, d(H)=j \}$ in $V_{j} / V_{j-1}$. Then
\[ \gr_{w}M = \prod_{k=1}^{r} M(k). \]
The following result now follows from unraveling the definitions.
\begin{cor}\label{cor:relFTfactor}
Let $(c_j) \in \gr_w(M)$ be the image of $c\in\init_{w}(Y)$ under the isomorphism $\Phi_{w} : \gr_{w}M \cong \init_{w}(Y)^{\an}$. Then, under this isomorphism, the loop 
\[\Pi_{w} : S^1 \to \init_{w}(Y)^{\an} , \ \th \mapsto (e^{i\th})^{w}c\]
is identified with the loop 
\[ 
S^1 \to \prod_{k=1}^{r} M(k), \ \th \mapsto (e^{i d_k \th} c_k)_k.
\]
In particular, since $\gr_{w}M$ is a product, after choosing a base-direction on $S^1$, this gives a factorization 
\[ \Pi_{w} = \Pi_{1}^{d_{1}} \cdot \dots \cdot \Pi_{r}^{d_r} \]
into commuting factors, and $\Pi_{k}$ is the central scaling loop in $M(k)$.
\end{cor}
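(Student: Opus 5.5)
The plan is to compute the loop $\Pi_w$ coordinate by coordinate after pulling it back along $\Phi_w$. By Proposition~\ref{prop:initgr}, $\Phi_w$ identifies $\gr_w M$ with $\init_w(Y)$ via $v\mapsto(\init_w(\a_H)(v))_{H\in\cA}$. Here $\init_w(\a_H)$ factors through the projection of $\gr_w V$ onto the single graded piece $V_{d(H)}/V_{d(H)-1}$. Write $v=(v_1,\ldots,v_r)$ with $v_k\in V_{d_k}/V_{d_k-1}$. Then the $H$-coordinate of $\Phi_w(v)$ is $\init_w(\a_H)(v_k)$, where $d_k=d(H)$.

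First I would show that the cocharacter action of $w$ on $T$ corresponds, under $\Phi_w$, to the action on $\gr_wM$ in which $s\in\Gm$ scales the $k$-th graded piece by $s^{d_k}$. Indeed, $\Phi_w$ applied to $(s^{d_k}v_k)_k$ has $H$-coordinate $s^{d(H)}\init_w(\a_H)(v)$ by linearity of $\init_w(\a_H)$. This is exactly $s^{w}\cdot\Phi_w(v)$, since $w_H=d(H)$. Scaling each piece preserves each $M(k)$, because $M(k)$ is the complement of a central arrangement. So this is a well-defined action on $\prod_k M(k)$, and $\Phi_w$ is equivariant. Setting $s=e^{i\th}$ and $v=\Phi_w^{-1}(c)=(c_k)_k$ shows that $\Pi_w$ corresponds to $\th\mapsto(e^{id_k\th}c_k)_k$. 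This proves the first claim.

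For the factorization, I would use $\pi_1(\prod_k M(k),(c_k))\cong\prod_k\pi_1(M(k),c_k)$. The loop $\th\mapsto(e^{id_k\th}c_k)_k$ has $k$-th component $\Pi_k^{d_k}$, where $\Pi_k:\th\mapsto e^{i\th}c_k$ is the central scaling loop of $M(k)$ based at $c_k$. Let $\iota_k$ denote the inclusion of the $k$-th factor, holding the other coordinates fixed at $c_j$. In a product group the images of the different $\iota_k$ commute. The given loop is the product of the $\iota_k(\Pi_k)^{d_k}$; explicitly, one reparametrizes the diagonal loop into a concatenation of loops, each moving only one factor. The phrase ``after choosing a base-direction on $S^1$'' just fixes $\th=0$ as the base point, so that we obtain actual based elements rather than free classes.

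I do not expect a real obstacle. The only point needing care is the well-definedness of the graded scaling on $M(k)$ together with the equivariance of $\Phi_w$, and both reduce to the linearity of the initial forms $\init_w(\a_H)$ and the fact that each of them involves a single graded piece.
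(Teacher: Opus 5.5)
Your proposal is correct and is precisely the ``unraveling of the definitions'' that the paper invokes without giving further detail. Since each $\init_w(\a_H)$ sees only the graded piece $V_{d(H)}/V_{d(H)-1}$, $\Phi_w$ intertwines the weighted scaling $(v_k)_k\mapsto(s^{d_k}v_k)_k$ with the cocharacter action $s^w$, and the factorization then follows from the fundamental group of a product; centrality of $\Pi_k$ in $\pi_1(M(k)^{\an})$ is not needed here and is recorded separately in the paper.
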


\subsection{Intersection subgroups and (relative) full twists} \label{ss:intersection-sub}

In this subsection, we identify the transport of the individual factors $\Pi_j$ appearing in \ref{cor:relFTfactor} to the nearby fiber $M$ in the Gröbner degeneration. They are described as relative full twists associated to a filtration of $\pi_{1}(M^{\an})$ by so-called intersection subgroups.  In case the arrangement is a complex reflection arrangement, intersection subgroups agree with parabolic subgroups as described in \cite[\S 2 D]{BMR}.

Recall that $V$ is a complex vector space, $\cA$ a finite essential linear hyperplane arrangement, and $M=M(\cA)$ is the corresponding hyperplane complement, which we identify with $Y \subset T$ under the closed immersion into the intrinsic torus. Choose some base-point $y_0 \in M(\cA)$, and denote by $P(\cA) := \pi_{1}(M(\cA),y_0)$ its fundamental group. 

The hyperplane valuations are naturally related to the abelianization of $P(\cA)$. Every $\a_{H}$ induces a map $\a_{H,*} : P(\cA) \to \pi_{1}(\CC^{\times}) \cong \ZZ$. For a loop $b : S^1 \to \cM$, the image $\alpha_{H,*}(b)$ is the homotopy class of the loop $\alpha_{H} \circ b : S^1 \to \CC^{\times}$. Under the identification $\pi_{1}(\CC^{\times}) \cong \ZZ$ its image is the winding number of the loop $\alpha_{H,*}(b)$. Our convention is that the counter-clockwise loop around $0$ has winding number $1$. 
\begin{defn} For \(b\in\pi_1(M,y_0)\), we call \(\alpha_{H,*}(b)\) its winding number around \(H\).
\end{defn}
The following is a reformulation of \cite[Proposition 2.2]{BMR}. 
\begin{prop}
The winding-number pairing
\[\langle-,-\rangle: \pi_1(M,y_0)^{\ab}\times X^*(T) \longrightarrow \mathbb Z,\qquad \langle b,\chi\rangle=\chi_{*}(b)\in \ZZ,\]
is perfect. Equivalently, the induced map
\[\pi_1(M,y_0)^{\ab}\longrightarrow X_*(T) \] 
is an isomorphism.
\end{prop}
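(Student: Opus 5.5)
We need to show that the winding-number pairing between $H_1(M^{\an};\ZZ)$ and $X^*(T)=\ZZ^{\cA}$ is perfect. Equivalently, the map $H_1(M^{\an};\ZZ)\to \ZZ^{\cA}$, sending $b\mapsto(\alpha_{H,*}(b))_H$, is an isomorphism. Since $\pi_1^{\ab}=H_1$ by Hurewicz, the plan is to compute $H_1(M^{\an};\ZZ)$ and exhibit explicit dual bases.

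\emph{Step 1 (surjectivity via meridians).} For each $H\in\cA$, choose a point $p_H\in H$ lying on no other hyperplane; such a point exists because the hyperplanes are distinct. Take a small loop $\mu_H$ around $H$ near $p_H$, given by $s\mapsto p_H+\epsilon e^{2\pi i s}v_H$ with $\alpha_H(v_H)=1$. For $\epsilon$ small, $\alpha_{H'}$ is nonvanishing on this small disc for every $H'\neq H$, so $\alpha_{H'}\circ\mu_H$ has winding number $0$. On the other hand, $\alpha_H\circ\mu_H=\epsilon e^{2\pi i s}$ has winding number $1$. Transporting $\mu_H$ to $y_0$ along any path, the classes $[\mu_H]\in H_1$ map to the standard basis $e_H$ of $\ZZ^{\cA}$. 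Hence the map is surjective, and it is split by $e_H\mapsto[\mu_H]$.

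\emph{Step 2 (rank count).} It remains to show that $H_1(M^{\an};\ZZ)$ is free of rank $|\cA|$, or equivalently that it is generated by the meridians. I would deduce this from the standard computation $H^1(M;\ZZ)\cong\ZZ^{\cA}$ with basis the classes $\frac{1}{2\pi i}\,d\log\alpha_H$ (Brieskorn/Orlik–Solomon; see \cite{OrlikTerao}), together with the torsion-freeness of $H_1$, which follows from the torsion-freeness of the Orlik–Solomon algebra in degree $2$ by universal coefficients.

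A self-contained alternative is Zariski–van Kampen/Lefschetz: take a generic complex line $L\subset V$ not through $0$. By the Lefschetz hyperplane theorem of Hamm–Lê type for complements, $\pi_1(L\cap M)\to\pi_1(M)$ is surjective. Now $L\cap M$ is $\CC$ minus $|\cA|$ distinct points, one for each $H$, so its $H_1$ is free on the meridians. Thus $H_1(M)$ is generated by the $[\mu_H]$.

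\emph{Step 3 (conclusion).} Combining the steps, $H_1(M)$ is generated by $|\cA|$ elements that map to a basis of $\ZZ^{\cA}$, so the map is an isomorphism. Perfectness of the pairing with $X^*(T)$ is the dual statement, since $\langle b,\chi\rangle=\chi_*(b)$ is exactly the $\chi$-coordinate of the image of $b$ in $X_*(T)$.

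The main obstacle is Step 2, that is, showing there is nothing in $H_1$ beyond the meridians (no extra generators or torsion). I would cite the Lefschetz-type surjectivity, or equivalently Brieskorn's theorem, rather than reprove it.
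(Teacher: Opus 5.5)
Your proof is correct and follows essentially the paper's route. The paper gives no argument beyond citing \cite[Proposition 2.2]{BMR}, which amounts to the standard fact that $H_1(M^{\an};\ZZ)$ is free abelian on the meridian classes. Your Step~1 translates that fact into winding numbers, and your Step~2 cites the same standard input (Brieskorn/Orlik--Solomon, or Zariski--Lefschetz surjectivity) on which it rests.

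In the Lefschetz variant you do not need to identify the line meridians with the $\mu_H$. There $H_1$ is a quotient of $\ZZ^{\cA}$ that surjects onto $\ZZ^{\cA}$, and so it is isomorphic to it.
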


Note that this is well-defined on conjugacy classes, so the following Lemma makes sense. 
\begin{lemma} For $f \in Y(F)$, we have $\alpha_{H,*}(b_f) = d_f(H)$. That is, the hyperplane valuation of $f$ for $H$ agrees with the winding number of $b_f$ about $H$. Therefore the image of $b_f$ in $\pi_{1}(\cM,y_0)^{\ab}$ agrees with the hyperplane valuation function $d_{f} \in \ZZ^{\cA}\cong X_*(T)$.
\end{lemma}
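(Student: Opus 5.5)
The plan is to reduce the statement to Proposition~\ref{prop:tropwinding}. Here $b_f$ denotes a representative in $P(\cA)=\pi_1(M,y_0)$ of the algebraic loop $\gamma_f=\cL_{\alg}(f)$. Each winding number $\alpha_{H,*}$ factors through the abelianization, so it is a class function, and $\alpha_{H,*}(b_f)$ is independent of the chosen representative.

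First, observe that the coordinate function $x_H$ on $T=\Gm^{\cA}$ is a character $\chi_H\in X^*(T)$. Its restriction to $Y\cong M$ is $\alpha_H$. Hence the composite
\[
\pi_1(M)\longrightarrow\pi_1(T)=X_*(T)\xrightarrow{\ \langle\chi_H,-\rangle\ }\ZZ
\]
equals $\alpha_{H,*}$. Indeed, $\chi_H\circ b=\alpha_H\circ b$ as loops in $\CC^\times$, and the identification $\pi_1(\Gm)\cong\ZZ$ by counterclockwise winding is the same one used in Example~\ref{expl:algloops}~(2),(3) and in Proposition~\ref{prop:tropwinding}.

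Next, apply Proposition~\ref{prop:tropwinding}. The image of $\gamma_f$ in $X_*(T)$ is $\val_f=w_f$. Pairing with $\chi_H$ gives
\[
\alpha_{H,*}(b_f)=\langle\chi_H,w_f\rangle=\val_t(\alpha_H(f))=d_f(H).
\]
Alternatively, one can argue directly via Corollary~\ref{cor:naturality} applied to $\alpha_H:M\to\Gm$ together with Example~\ref{expl:algloops}~(2): the pushforward of $\gamma_f$ is the algebraic loop of $\alpha_H(f)\in F^\times$, and this has winding number $\val_t(\alpha_H(f))$.

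For the final assertion, use that the map $\pi_1(M,y_0)^{\ab}\to X_*(T)\cong\ZZ^{\cA}$ sends a class $b$ to $(\alpha_{H,*}(b))_{H\in\cA}$. This map is an isomorphism by the reformulation of \cite[Proposition 2.2]{BMR}. By the previous step, it sends the class of $b_f$ to $(d_f(H))_H=d_f$.

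The only real subtlety is the sign and orientation convention. One must check that the meridian orientation used in defining $\cL_{\alg}$, via positive meridians $\mu_i$ and the parametrization $t=\rho e^{i\theta}$, matches the counterclockwise convention for winding numbers. This holds because in the $\Gm$ case the loop $\theta\mapsto(\rho e^{i\theta})^m u(\rho e^{i\theta})$ winds $m=\val_t$ times counterclockwise.
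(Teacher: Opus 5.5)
Your proposal is correct and follows the paper's route. The paper deduces the lemma in one line from Proposition~\ref{prop:tropwinding}, and your identification of $\alpha_{H,*}$ with pairing against the coordinate character $\chi_H$ (together with the orientation check) simply makes that deduction explicit.
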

\begin{proof}
This follows from the general Proposition \ref{prop:tropwinding} for very affine varieties. 
\end{proof}
\begin{cor} Under the identification $\pi_1(M,y_0)^{\ab}\cong X_*(T)$ the image of the winding number map $Y(F) \to X_*(T)$ coincides with the integral points of $\Trop(Y)(\ZZ)$.
\end{cor}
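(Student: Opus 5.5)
Composing the previous lemma with the preceding proposition reduces the corollary to the equality of subsets of $X_*(T)=\ZZ^{\cA}$
\[
\{\val(f)\mid f\in Y(F)\}=\Trop(Y)(\ZZ).
\]
By the lemma just proved, the winding number map $Y(F)\to\pi_1(M,y_0)^{\ab}$, $f\mapsto [b_f]$, followed by the isomorphism $\pi_1(M,y_0)^{\ab}\cong X_*(T)$ of the preceding proposition, sends $f$ to its hyperplane valuation function $d_f$, which is the valuation vector $w_f=\val(f)$. So it suffices to prove the displayed equality, and I would check the two inclusions separately.

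The inclusion ``$\subseteq$'' is immediate: every formal loop $f\in Y(F)$ has $w_f\in\Trop(Y)\cap X_*(T)$. This was noted when $\Trop(Y)$ was introduced, and it also follows from Theorem~\ref{thm:tropical-fiber-main}(1), since $L_wY(\CC)=\val^{-1}(w)$.

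For ``$\supseteq$'', fix $w\in\Trop(Y)(\ZZ)$. By Theorem~\ref{thm:tropical-fiber-main}(1) it is enough to show that $L_wY(\CC)$ is nonempty. The cleanest route in the arrangement setting is the explicit description $L_wY\cong\prod_j M_j$ from the earlier lemma on valuation strata. It then suffices that each $M_j=M(V_j,\cA_j)$ is nonempty, and this holds because $M_j$ is the complement in the vector space $V_j$ of finitely many proper hyperplanes $V_j\cap H$ with $d(H)=j$.

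That properness is the one point needing care. For $H$ with $d(H)=j$ we have $\alpha_H\not\equiv 0$ on $V_j$, since $j(\alpha_H)=d(H)$ as recorded in the definition of initial forms. Given points $x_j\in M_j$ (with $x_j=0$ for $j<m_1$, as then $V_j=0$), the series $f=\sum_j x_jt^j$ satisfies $\val_t(\alpha_H(f))=d(H)$ for every $H$: the coefficients $\alpha_H(x_i)$ vanish for $i<d(H)$ because $x_i\in V_i\subset H$, and $\alpha_H(x_{d(H)})\neq 0$. Hence $f\in Y(F)$ and $\val(f)=w$.

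I expect the main obstacle to be hidden in that lemma. It uses that $w$ actually lies in $\Trop(Y)$: for an arbitrary integer vector $d\in\ZZ^{\cA}$ the level sets need not satisfy $V_j\not\subset H$ whenever $d(H)=j$. Tropicality supplies exactly this via the circuit description. If $V_{d(H)}\subset H$, then $\alpha_H$ lies in the span of $\{\alpha_{H'}\mid d(H')>d(H)\}$, giving a circuit on which the minimum weight is attained only at $H$. Its initial form would then be a monomial, contradicting $w\in\Trop(Y)$. If I preferred to avoid this, the fallback is the general fact that $\init_w(Y)\neq\varnothing$ for $w\in\Trop(Y)(\ZZ)$, together with smoothness of the Gröbner degeneration (arrangement complements are schön) and Hensel lifting of a point of $\init_w(Y)$ to an arc of $\widehat{\cY}_w$.
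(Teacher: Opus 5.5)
Your proposal is correct and follows the route the paper leaves implicit: the preceding lemma identifies the winding-number map with $f\mapsto w_f$, so the corollary reduces to the equality $\val(Y(F))=\Trop(Y)(\ZZ)$. The paper treats the surjectivity half as built into its valuation-strata description ($L_wY(\CC)=\val^{-1}(w)$ with $L_wY\cong\prod_j M_j$); your circuit argument, which uses $w\in\Trop(Y)$ to show $V_{d(H)}\not\subset H$ and hence that each $M_j$ is nonempty, correctly supplies the step the paper leaves unproved.
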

In the remainder of this section, we will single out a canonical lift (up to conjugacy) in $\pi_{1}(M,y_0)$ of any tropical point $d_f \in \Trop(Y)$. For this purpose, we introduce the full twist loop.

\begin{defn} The loop 
\[ \pi :  [0,1] \to M, s \mapsto y_0 e^{2\pi i s}. \]
is called the \emph{full-twist} with respect to $V$ and $\cA$.
\end{defn}

\begin{remark} The full twist in a linear hyperplane complement is always central by \cite[Lemma 2.4]{BMR}.  In particular, each loop $\Pi_k$ in Corollary \ref{cor:relFTfactor} is exactly the full twist in the arrangement complement $M(k)$ and it is central in $\pi_{1}(M(k)^{\an})$.
\end{remark}

To describe the filtration by intersection subgroups, recall that the intersection lattice $L(\cA)$ is the set of all non-empty intersections of subfamilies of $\cA$. It is ordered by reverse inclusion, and in the linear case has a unique minimal element $Z =  \bigcap_{H \in \cA} H$, sometimes called the center. Let $X\in L(\cA)$ and let $\cA_{X} = \{H\in \cA \mid X \subset H\}$. The rank of $\cA$ is defined to be $\rk(\cA) = \codim_{V} Z$. Note that the arrangement $\cA$ is essential if $\rk(\cA) = \dim(V)$, equivalently if $Z = 0$. Further, if $X\neq Z$, then $\rk(\cA_{X}) < \rk(\cA)$.

In \cite{Par}, the author defines so-called intersection subgroups of $P$. More precisely, the group $\pi_{1}(M_{V}(\cA_{X}),z_{X})$ for some choice of base-point $z_{X}$ is embedded in $P$ as an intersection subgroup of type $X$.

\begin{defn} The image of the full-twist in $P_X := \pi_{1}(M_{V}(\cA_{X}),z_{X})$ in $\pi_{1}(\cM(\cA), y_0)$ is called the full twist of the intersection subgroup $P_X \subset \pi_{1}(\cM(\cA), y_0)$. We denote it by $\pi_{P_X}$. The \emph{relative full twist} in $P$ with respect to $P_X$ is the loop $\pi_{P/P_X} := \pi_{P}\pi_{P_X}^{-1}$.
\end{defn}

For $X\in L(\cA)$ we call $\cA^{X} = \{ H\cap X \mid H \in \cA \setminus \cA_{X} \}$ the restricted arrangement in $X$. It is a finite linear arrangement in $X$. The following is inspired by \cite[\S 4]{Par}.

\begin{lemma}\label{lem:relFTloop} Choose a splitting $V \cong X \oplus V/X$. Let $x \in M_X(\cA^{X})$, and $y\in \cM_{V/X}(\cA_X)$. For $\e > 0$ small enough, the loop 
\[r : S^1 \to M_V(\cA), \qquad e^{i\theta} \mapsto e^{i\theta}x + \e y \]
is a representative for the relative full twist $\pi_{P/P_X}$.
\end{lemma}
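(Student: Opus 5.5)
We would prove Lemma \ref{lem:relFTloop} by writing $r$ as a product of the full twist $\pi_P$ and the inverse of the full twist $\pi_{P_X}$, realised inside a small tubular neighbourhood of the point $x\in X$. The basic identity is
\[
e^{i\theta}x+\e y \;=\; e^{i\theta}\bigl(x+\e e^{-i\theta}y\bigr).
\]
So $r$ is the pointwise product of two loops. The first is the scaling loop $\theta\mapsto e^{i\theta}(x+\e y)$, which is the full twist $\pi_P$ based at $x+\e y$. The second is $\theta\mapsto x+\e e^{-i\theta}y$, which is the inverse scaling loop in the normal direction $V/X$.

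The first step is to check that all three loops lie in $M_V(\cA)$ for $\e$ small.
\begin{itemize}
\item For $H\in\cA_X$, $\a_H$ vanishes on $X$, so $\a_H(e^{i\theta}x+\e y)=\e\,\a_H(y)\neq 0$ because $y\in M_{V/X}(\cA_X)$.
\item For $H\notin\cA_X$, $\a_H(x)\neq0$ because $x\in M_X(\cA^X)$. Hence $|\a_H(e^{i\theta}x)|=|\a_H(x)|$ is bounded below uniformly in $\theta$, and a small perturbation $\e y$ cannot make it vanish.
\end{itemize}

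Since the loops $\theta\mapsto e^{i\theta}$ and $\theta\mapsto e^{-i\theta}$ are parametrised by the same circle, the pointwise product is homotopic to the concatenation. Concretely, use the homotopy $(s,\theta)\mapsto e^{i\phi_s(\theta)}\bigl(x+\e e^{-i\psi_s(\theta)}y\bigr)$, with reparametrisations $\phi_s,\psi_s$ moving from the diagonal to the split parametrisation. Throughout this homotopy the point stays of the form $\lambda(x+\e\mu y)$ with $|\mu|=1$ and $\lambda\in\CC^\times$, so the same estimates keep it inside $M_V(\cA)$. This shows that $r$ is freely homotopic, and in fact homotopic with base point $x+\e y$, to the concatenation $\pi_P\cdot\sigma^{-1}$, where $\sigma(\theta)=x+\e e^{i\theta}y$.

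The remaining, and main, step is to identify $\sigma$ with the full twist $\pi_{P_X}$ of the intersection subgroup. Following \cite{Par}, the inclusion $P_X\hookrightarrow P$ is realised by a small ball $B$ around the generic point $x\in X$. In $B$ only the hyperplanes of $\cA_X$ meet, so $B\cap M_V(\cA)=B\cap M_V(\cA_X)$, and this intersection is homotopy equivalent to $M_V(\cA_X)\simeq M_{V/X}(\cA_X)$ via $v\mapsto(v-x)/\e$. Under this identification $\sigma$ becomes the scaling loop $e^{i\theta}y$ in $M_{V/X}(\cA_X)$, which is the full twist of $P_X$.

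We expect this to be the subtle point. The embedding of $P_X$ is only defined up to the choice of base point $z_X$ and of a path to $y_0$, so we must check that the resulting conjugation ambiguity is compatible with the product. Since $\pi_P$ is central (\cite[Lemma 2.4]{BMR}), conjugation does not affect $\pi_P\pi_{P_X}^{-1}$ beyond conjugating $\pi_{P_X}$ itself. Hence the base-point and path choices cause no further trouble, and $r$ represents $\pi_{P/P_X}$.
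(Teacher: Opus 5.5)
Your proposal is correct and follows the paper's own argument. The paper likewise writes $r(\theta)=e^{i\theta}(x+\e e^{-i\theta}y)$, so that $\pi_P^{-1}r$ is represented by the normal loop $\theta\mapsto x+\e e^{-i\theta}y$, which it identifies with $\pi_{P_X}^{-1}$. You supply the details the paper leaves as ``easy to check'': membership in $M_V(\cA)$ for small $\e$, the homotopy from pointwise product to concatenation, the identification of $P_X$ through a small ball around $x$, and the conjugacy ambiguity, which you handle using centrality of $\pi_P$.
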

\begin{proof} This follows from the following observation: consider the conjugacy class $c_r$ of $r$ in $P$. Then it is easy to check that $\pi_{P}^{-1}c_r$ is represented by the loop $e^{i\theta} \mapsto x+e^{-i\theta} \e y$. But this is a representative of $\pi_{P_X}^{-1}$. Therefore, $r$ represents $\pi_{P}\pi_{P_X}^{-1} = \pi_{P/P_X}$.
\end{proof}

Let $f\in M(F)$ with hyperplane valuation function $d_{f} : \cA \to \ZZ$. Then for each $j$ we have 
\[V_j =   \bigcap_{H \in \cA_{>j}} H \in L(\cA)\]
and this satisfies $\cA_{>j} = \cA_{V_{j}} = \{H \in \cA \mid V_{j}\subset H\} $. In particular, the hyperplane valuation function induces a descending filtration $P_{j}$ of $P$ by intersection subgroups.

Indeed, we may write $f = \sum x_{j} t^{j}$. Then $\cA_{> j} = \{H\in \cA \mid \alpha_{H}(x_{i}) = 0 \textup{ for all} i \le j \}$. Let 
\[V_{j} = \bigcap_{x_{i}\in H\textup{ for all} i \le j} H = \bigcap_{H \in \cA_{>j}} H.\]
By definition, one always has that $\cA_{>j} \subset \cA_{V_{j}}$. The other direction follows because $\cA_{>j}$ comes from the loop $f$: Let $H \in \cA_{V_{j}}$, i.e. $V_j \subset H$. Then we need to check that $d_{f}(H) > j$, which is equivalent to $x_i \in H$ for all $i \le j$. However, $V_j$ contains every such $x_i$. So $x_i \in V_j \subset H$ for all $i \le j$ implies the claim.

\subsection*{Notation} Let $f\in M(F)$. Then we have $\cA_{>j} = \emptyset$ for large $j$. We write
\[ \{d_{f}(H) \mid H\in \cA \} = \{d_1,...,d_r \}, \]
ordered as $d_1 < d_2 < \dots < d_r$. 

For $k=1,...,r$ let $P_{k} \subset P$ denote the intersection subgroup of type $X_{d_{k}}$. Note that $P_{r}$ is trivial, so the relative full twist $\pi_{P_{r-1}/P_{r}} = \pi_{P_{r-1}}$ is the full twist for the intersection subgroup $P_{r-1}$.

\begin{theorem}\label{thm:braidrep}
 For $f\in M(F)$ with corresponding filtration $(P_{k})$ as above, the conjugacy class of $\g_f=\cL_{\alg}(f)$ in $P$ is represented by the loop
\[ \pi_{P/P_1}^{d_1} \pi_{P_1/P_2}^{d_2} \dots \pi_{P_{r-2}/P_{r-1}}^{d_{r-1}}\pi_{P_{r-1}}^{d_{r}}.\]
\end{theorem}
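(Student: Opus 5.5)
By the tropical classification, it suffices to compute $\gamma_f$ for one convenient formal loop with the same hyperplane valuation function $d=d_f$. Complements of linear arrangements are wunderschön: by Proposition~\ref{prop:initgr}, $\init_w(Y)\cong\gr_wM$, which is a connected arrangement complement, and it is smooth. So Theorem~\ref{thm:trop-classification} shows that $\cL_{\alg}(f)$ depends only on $w_f$.

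\textbf{Choice of representative.} Write $V_{d_1}\subset\cdots\subset V_{d_r}=V$ for the valuation filtration. By Lemma~\ref{lem:minmax}, $V_{d_1-1}=0$. For each $k$, choose a generic vector $x_k\in V_{d_k}$, meaning $\alpha_H(x_k)\neq 0$ for every $H$ with $d(H)=d_k$. Such vectors exist because $\alpha_H$ does not vanish on $V_{d_k}$ when $d(H)=d_k$. Set
\[
f_0:=\sum_{k=1}^r x_k t^{d_k}.
\]
Then $\alpha_H(x_k)=0$ for $d(H)>d_k$, because $V_{d_k}\subset H$, and $\alpha_H(x_k)\neq0$ for $d(H)=d_k$. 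Hence $d_{f_0}=d$. Since $f_0$ is a polynomial in $t^{\pm1}$, no approximation is needed. The loop $\gamma_{f_0}$ is represented by
\[
\theta\longmapsto \sum_k \varepsilon^{d_k}e^{id_k\theta}x_k
\]
for small $\varepsilon>0$. One can equally read this off from Theorem~\ref{thm:initialloop} and Corollary~\ref{cor:relFTfactor}.

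\textbf{Multi-scale decomposition.} Write $d_k=d_1+(d_2-d_1)+\cdots+(d_k-d_{k-1})$. I would deform the loop above through loops in $M$ into the concatenation of the following pieces:
\begin{itemize}
\item first rotate all components simultaneously by $e^{i d_1\theta}$, which gives $\pi_P^{d_1}$;
\item then rotate the components $k\ge 2$, which lie in $V_{d_2}+\cdots$ and are of size $O(\varepsilon^{d_2})$ relative to $x_1$, by $e^{i(d_2-d_1)\theta}$;
\item continue in this way, so that at stage $j$ only the components with $k>j$ rotate, at relative speed $d_{j+1}-d_j$.
\end{itemize}
The homotopy uses the scale separation $\varepsilon^{d_{j+1}}\ll\varepsilon^{d_j}$. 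Points of the form $\sum_{k\le j}\varepsilon^{d_k}(\text{unit})x_k+O(\varepsilon^{d_{j+1}})$ stay off every hyperplane $H$ with $d(H)\le d_j$. The remaining hyperplanes, those with $d(H)>d_j$, contain $V_{d_j}$. They are therefore detected only by the small part, viewed in $V/V_{d_j}$.

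\textbf{Identifying each stage.} The key step is to show that stage $j\ge1$ represents $\pi_{P_j}^{\,d_{j+1}-d_j}$. Here $P_j$ is the intersection subgroup of type $X=V_{d_j}$, and the base point $\sum_{k\le j}\varepsilon^{d_k}x_k$ is a generic point of $X$ with a small perturbation transverse to $X$. This is exactly the local model in Lemma~\ref{lem:relFTloop} and in Paris's construction of $P_X$: near a generic point of $X$, the complement looks like $M_{V}(\cA_X)$, and rotating the transverse part is its full twist. This step produces
\[
\gamma_{f_0}\sim\pi_P^{d_1}\,\pi_{P_1}^{d_2-d_1}\cdots\pi_{P_{r-1}}^{d_r-d_{r-1}}.
\]

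\textbf{Regrouping.} The full twists $\pi_{P_j}$ pairwise commute. Indeed, $P_{j'}\subset P_j$ for $j'>j$, and $\pi_{P_j}$ is central in $P_j$ by \cite[Lemma 2.4]{BMR}. Abel summation then rewrites the product as
\[
\prod_k\bigl(\pi_{P_{k-1}}\pi_{P_k}^{-1}\bigr)^{d_k},
\]
with $P_0=P$ and $P_r$ trivial, which is the formula in the theorem.

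\textbf{Main obstacle.} I expect the hardest part to be the base-point bookkeeping in the identification step. The embeddings $P_X\hookrightarrow P$ depend on paths, and all the $P_{j}$ must be embedded compatibly (nested) using the single path given by shrinking $\varepsilon$. I would handle this by choosing the base points $z_{V_{d_j}}$ along the same multi-scale family, so that the nested embeddings are realized simultaneously.
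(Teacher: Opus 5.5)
Your proof is correct. It reaches the formula by a different route from the paper's proof. In effect it is a direct-homotopy version of the Remark after the theorem: the ray formula for the refined wonderful compactification, followed by telescoping.

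\emph{The paper's route.} It splits the central loop, using $\gr_wM=\prod_k M(k)$, into commuting factors $\Pi_k^{d_k}$, where $\Pi_k$ rotates only the $k$-th graded piece. It transports each $\Pi_k$ to a nearby fiber along the weighted lift $\sum_j z^{d_j}c_j$, which is exactly your $f_0$, using Corollary~\ref{cor:centloop}. It then identifies $\Pi_k$ in one step with the relative twist $\pi_{P_{k-1}/P_k}$ via Lemma~\ref{lem:relFTloop}.

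\emph{Your route.} You replace the transport by Theorem~\ref{thm:trop-classification}, so $\g_f=\g_{f_0}$ is literally $f_0$ on a small circle. You then expand the class $(d_1,\ldots,d_r)$ of the torus map $(\phi_1,\ldots,\phi_r)\mapsto\sum_k\e^{d_k}e^{i\phi_k}x_k$ in the basis of vectors $(0,\ldots,0,1,\ldots,1)$ rather than the coordinate basis. Each stage is then an absolute full twist $\pi_{P_j}$, read off directly from Paris's local model, and the relative twists appear only after Abel summation.

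\emph{What each buys.} Your version does not need Lemma~\ref{lem:relFTloop}, whose proof itself uses the centrality of $\pi_P$ and the same local model. It also makes visible the positive exponents $d_{j+1}-d_j$, which are the contact orders in Proposition~\ref{p:rayformula}. The paper's version produces the relative twists directly, with no regrouping.

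\emph{Two points worth making explicit.}
\begin{enumerate}
\item The multi-scale homotopy is best justified by the torus map itself. For small $\e$ it lands in $M$, since for $d(H)=d_i$ one has $\alpha_H(\sum_k\e^{d_k}e^{i\phi_k}x_k)=\e^{d_i}(e^{i\phi_i}\alpha_H(x_i)+O(\e))$. Hence the diagonal loop is homotopic to any concatenation of torus loops with the same total class in $\ZZ^r$.
\item Your base-point plan works. The balls of radius a small multiple of $\e^{d_j}$ about $\sum_{k\le j}\e^{d_k}x_k$ avoid every hyperplane with $d(H)\le d_j$. They are nested as $j$ increases, and all contain $\sum_k\e^{d_k}x_k$. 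This gives the compatible embeddings $P_{r-1}\subset\cdots\subset P_1\subset P$, which the paper's proof leaves implicit.
\end{enumerate}
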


\begin{proof}
By Corollary \ref{cor:relFTfactor}, the loop $\g_f$ has a factorization
\[
\Pi_1^{d_1}\cdots\Pi_r^{d_r}
\] 
into pairwise commuting loops obtained by transporting the scaling loops of the graded factors of $\gr_wM$. It therefore suffices to show that
\[
\Pi_k=\pi_{P_{k-1}/P_k},
\qquad P_0:=P.
\]

Choose a splitting
\[
V=W_1\oplus\cdots\oplus W_r
\]
of the valuation filtration. For
$u=(u_1,\ldots,u_r)\in\gr_wM=\prod_jM(j)$, set
\[
\widetilde u(z) = \sum_{j=1}^r z^{d_j}u_j.
\]
Let
\[
c_f=(c_1,\ldots,c_r)
\]
and consider the $k$-th scaling loop
\[
c_{k,\theta} = (c_1,\ldots,c_{k-1},e^{i\theta}c_k,c_{k+1},\ldots,c_r) 
\]
in the special fiber.

If $H\in\cA$ has valuation level $d_j$, then
\[
z^{-d_j}\alpha_H(\widetilde c_{k,\theta}(z)) = \operatorname{in}_w(\alpha_H)(c_{k,\theta})+O(z),
\]
uniformly in $\theta$. Moreover,
\[
\left| \operatorname{in}_w(\alpha_H)(c_{k,\theta}) \right| = |\alpha_H(c_j)|>0.
\]
Since $\cA$ is finite, there is therefore a $\delta>0$ such that
\[
\widetilde c_{k,\theta}(z)\in M
\]
for every $\theta$ and every $0<|z|<\delta$.

Consequently, the weighted lift defines an admissible transport of the $k$-th scaling loop from the special fiber to any sufficiently small nearby fiber. By Corollary \ref{cor:centloop}, for $0<\e<\delta$ the transported loop is represented in $M$ by
\[
\pi_k(\theta) = \sum_{j<k}\e^{d_j}c_j + \e^{d_k}e^{i\theta}c_k + \sum_{j>k}\e^{d_j}c_j.
\]

Multiplication by the positive scalar $\e^{-d_1}$ does not change its homotopy class. Thus we may write
\[
\pi_k(\theta) = c_+ + \e^{d_k-d_1}e^{i\theta}c_k + c_-,
\]
where
\[
c_+ = \sum_{j<k}\e^{d_j-d_1}c_j, \qquad c_- = \sum_{j>k}\e^{d_j-d_1}c_j.
\]
Since $d_j - d_1 > 0$ for all $j$, when $\e$ is sufficiently small this loop lies in the local arrangement complement corresponding to $P_{k-1}$. Applying Lemma \ref{lem:relFTloop} to this local arrangement therefore shows
\[
\Pi_k=\pi_{P_{k-1}/P_k}.
\]
Substituting this into the factorization above gives
\[
\g_f
=
\pi_{P/P_1}^{d_1}
\pi_{P_1/P_2}^{d_2}
\cdots
\pi_{P_{r-2}/P_{r-1}}^{d_{r-1}}
\pi_{P_{r-1}}^{d_r}
\]
up to conjugacy.
\end{proof}

\begin{remark}
In the situation of the preceding theorem, one can also compute $\g_f$ using a suitable tropical compactification. For an essential linear hyperplane arrangement, one may use a refinement of the wonderful compactification of De Concini and Procesi. Since $\Trop(Y)$ contains the lineality space $\RR(1,\ldots,1)$, we subdivide
\[
\RR=\RR_{\geq 0}\cup\RR_{\leq 0},
\]
so that the two lineality directions become rays of the refined fan. The resulting fan is smooth, and every $w_f$ is contained in the relative interior of a unique cone $\s_f$.

Let $d_1<d_2<\cdots<d_r$ be the distinct values of the valuation function $w_f$ on $\cA$, and set
\[
\cA_{>d_k}:=\{H\in\cA\mid w_f(H)>d_k\},
\]
for $1\leq k\leq r-1$. Let $\rho_k$ denote the ray whose primitive generator is the indicator function of $\cA_{>d_k}$. Furthermore, let
\[
\rho_{+}:=\RR_{\geq0}\mathbf 1_{\cA},
\qquad
\rho_{-}:=\RR_{\geq0}(-\mathbf 1_{\cA})
\]
be the two lineality rays, with primitive generators
\[
v_{\rho_{+}}=\mathbf 1_{\cA},
\qquad
v_{\rho_{-}}=-\mathbf 1_{\cA}.
\]
If $d_1\neq 0$, let $\epsilon=\operatorname{sgn}(d_1)\in\{+,-\}$ and $a_0=|d_1|$. Then
\[
w_f=a_0v_{\rho_{\epsilon}}+\sum_{k=1}^{r-1}(d_{k+1}-d_k)v_{\rho_k},
\]
while for $d_1=0$ the first summand is omitted. Thus this is precisely the decomposition of $w_f$ into the primitive ray generators of $\s_f$.

Let $P$ denote the full intersection subgroup and let $P_k$ be the intersection subgroup corresponding to $\cA_{>d_k}$. One checks that the two lineality rays correspond to the full twists
\[
\cL_{\trop}(v_{\rho_{+}})=\pi_P,
\qquad
\cL_{\trop}(v_{\rho_{-}})=\pi_P^{-1},
\]
while the ray $\rho_k$ corresponds to the full twist $\pi_{P_k}$. Hence by the ray formula in Proposition \ref{p:rayformula}, 
\[
\g_f=\pi_P^{\,d_1}\pi_{P_1}^{\,d_2-d_1}\pi_{P_2}^{\,d_3-d_2}\cdots\pi_{P_{r-1}}^{\,d_r-d_{r-1}}.
\]
Since $d_1<d_2<\cdots<d_r$, all exponents except possibly $d_1$ are positive. Equality with
\[
\pi_{P/P_1}^{d_1}\pi_{P_1/P_2}^{d_2}\cdots
\pi_{P_{r-2}/P_{r-1}}^{d_{r-1}}\pi_{P_{r-1}}^{d_r}
\]
also follows directly from the algebraic identities
\[
\pi_{P/P_1}=\pi_P\pi_{P_1}^{-1},
\qquad
\pi_{P_i/P_{i+1}}=\pi_{P_i}\pi_{P_{i+1}}^{-1},
\]
by telescoping.
\end{remark}

\section{Cohomology of braid varieties} \label{s:cohom-braid-var}
In this section, we apply the preceding results on algebraic loops to braid varieties associated with Weyl groups. A formal loop in the regular semisimple locus determines a braid, and the root valuations of the loop give an explicit positive representative as a product of relative full twists. We then use the horocycle correspondence and its cyclicity property to show that the ordinary and compactly supported cohomology of the corresponding braid varieties depends only on the conjugacy class of this braid. Combining these facts with the tropical classification of algebraic loops, we conclude that, for positive pure
algebraic braids, the cohomology of the associated braid varieties is controlled entirely by the root valuation data.

We start by fixing some notation. 
 Let $G$ be a simple complex algebraic group with Lie algebra $\frg$, and
fix a maximal torus $T\subset G$ with Cartan subalgebra
$\frt=\Lie(T)$. Let $\Phi\subset\frt^*$ be the corresponding root
system and let $W$ be its Weyl group. The root hyperplanes
$\ker(\alpha)\subset\frt$ form an essential hyperplane arrangement
$\cA$, naturally indexed by $\Phi/\{\pm1\}$, and its complement is the
regular semisimple locus
\[
Y=\frt^{\rs}:=
\frt\setminus\bigcup_{\alpha\in\Phi}\ker(\alpha).
\]
Let $\frc=\frt/\!/W$ be the Chevalley quotient. Since $W$ acts freely on
$\frt^{\rs}$, its regular locus is
\[
\frc^{\rs}=\frt^{\rs}/W.
\]
Consequently,
\[
P_W:=\pi_1\bigl((\frt^{\rs})^{\an}\bigr)
\qquad\text{and}\qquad
B_W:=\pi_1\bigl((\frc^{\rs})^{\an}\bigr)
\]
are respectively the pure braid group and the Artin braid group
associated with $W$, and they fit into the exact sequence
\[
1\longrightarrow P_W\longrightarrow B_W\longrightarrow W
\longrightarrow1.
\]

\subsection{Relative full twist} 

The explicit representative for an algebraic loop obtained in Theorem~\ref{thm:braidrep} is expressed in terms of relative full twists associated with nested parabolic subgroups. For the applications to braid varieties below, it is important to represent these elements by positive braids. In this subsection, we give a Coxeter-theoretic construction of relative full twists using positive lifts of longest elements. We show that this construction agrees with the earlier geometric definition, behaves naturally along chains of parabolic subgroups, and yields an explicit positive representative for the braid associated with a root valuation function.

 For any parabolic subgroup $W_H \subset W$ we denote by $w_{0}^{H}$ the longest element of $W_{H}$, and we let $w_0$ be the longest element of $W$.

\begin{lemma}
Let $x=w_{0}^{H}w_0$ and $y=w_0w_{0}^{H}$. Then $\ell(w_0) = \ell(w_{0}^{H})+\ell(x) = \ell(y)+\ell(w_{0}^{H})$.
\end{lemma}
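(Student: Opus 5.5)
We prove this with standard Coxeter combinatorics, using the characterization of lengths by inversion sets: for $v\in W$, $\ell(v)=|N(v)|$, where $N(v)=\{\beta\in\Phi^+ \mid v(\beta)\in\Phi^-\}$. Here $W_H$ is a (standard) parabolic subgroup with root subsystem $\Phi_H\subset\Phi$ and positive roots $\Phi_H^+=\Phi_H\cap\Phi^+$. Since $w_0^H$ and $w_0$ are involutions, $x^{-1}=w_0w_0^H=y$ and $\ell(x)=\ell(y)$. It therefore suffices to prove the single identity $\ell(w_0)=\ell(w_0^H)+\ell(y)$ with $y=w_0w_0^H$; the other identity follows by inverting and using $\ell(x)=\ell(x^{-1})$.

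The key facts we use are the following. First, $w_0^H$ maps $\Phi_H^+$ onto $\Phi_H^-$. Second, $w_0^H$ permutes $\Phi^+\setminus\Phi_H^+$. The second fact holds because a positive root outside $\Phi_H$ has a positive coefficient on some simple root not in the generating set of $W_H$, and elements of $W_H$ do not change those coefficients. Third, $w_0$ sends every positive root to a negative root.

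We now compute $N(y)$ for $y=w_0w_0^H$. A positive root $\beta$ lies in $N(y)$ if and only if $w_0^H(\beta)\in\Phi^+$, since $w_0$ negates positivity. By the first two facts, this holds exactly when $\beta\in\Phi^+\setminus\Phi_H^+$. Hence
\[
\ell(y)=|\Phi^+|-|\Phi_H^+|=\ell(w_0)-\ell(w_0^H),
\]
using $\ell(w_0)=|\Phi^+|$ and $\ell(w_0^H)=|\Phi_H^+|$, which follows from the first fact applied inside $W_H$. This gives the claim.

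The only point needing care is the second fact, that $W_H$ preserves $\Phi^+\setminus\Phi_H^+$. We would verify it directly for simple reflections $s_\alpha$ with $\alpha$ a simple root of $W_H$: such an $s_\alpha$ only changes the $\alpha$-coefficient of a root. So if $\beta\notin\Phi_H$, the image $s_\alpha(\beta)$ keeps a positive coefficient outside $\Phi_H$, which forces it to be positive and not in $\Phi_H$. If $W_H$ is merely conjugate to a standard parabolic subgroup, we first conjugate to reduce to the standard case, which is presumably the intended setting.
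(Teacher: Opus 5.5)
Your argument is correct. The paper gives no computation here. It simply cites a standard length identity for Coxeter groups (\cite[Lemma 1.5.3]{GP}), so your inversion-set argument is a self-contained replacement for that citation.

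It can be shortened considerably. Your first step already shows that for $\beta\in\Phi^+$ one has $y(\beta)\in\Phi^-$ if and only if $w_0^H(\beta)\in\Phi^+$. That is, $N(y)=\Phi^+\setminus N(w_0^H)$, which gives $\ell(y)=|\Phi^+|-\ell(w_0^H)=\ell(w_0)-\ell(w_0^H)$ at once. Your two facts about $W_H$ (that it sends $\Phi_H^+$ to $\Phi_H^-$ and permutes $\Phi^+\setminus\Phi_H^+$) are used only to identify $N(w_0^H)$ with $\Phi_H^+$ on both sides of this equation, and those identifications cancel.

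In fact the same one-line computation shows $\ell(w_0z)=\ell(zw_0)=\ell(w_0)-\ell(z)$ for every $z\in W$. So the lemma has nothing specifically to do with parabolic subgroups, or even with $w_0^H$ being an involution.

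This matters for your last sentence, which is not a valid reduction as stated. Conjugating $W_H$ into standard position changes lengths and moves $w_0$, so the identity does not transport under conjugation the way you suggest. With the general identity $\ell(w_0z)=\ell(w_0)-\ell(z)$, the standard versus non-standard distinction never arises.
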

\begin{proof}
This follows from \cite[Lemma 1.5.3]{GP}.
\end{proof}

\begin{defn}
    We write $x=w_{H \backslash G}$ and $y=w_{G/H}$, and denote their braid lifts by $\Delta_{H \backslash G}$ and $\Delta_{G/H}$. We call $\pi_{G/H} = \Delta_{H \backslash G}\Delta_{G/H}$ the \textit{relative full-twist} with respect to $W_H \subset W$. It is a pure braid generalizing the full-twist (the case $W_H=1$). Moreover we let $\Delta_{H}$ be the positive braid lift of $w_{0}^{H}$, and $\pi_{H} = \Delta_{H}^{2}$. When $H=G$ we sometimes write $\Delta_G = \Delta$.
\end{defn}
In the following we prove a few elementary properties of relative full-twists.

\begin{lemma}
    We have $\Delta_{H} \Delta_{H \backslash G} = \Delta_{G/H}\Delta_{H} = \Delta_{G}$.
\end{lemma}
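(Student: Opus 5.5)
The plan is to lift the length identity of the preceding lemma to the positive braid monoid. The basic fact I will use is the standard property of positive lifts (Matsumoto's theorem, or the defining property of the canonical section $W\to B_W^+$): if $u,v\in W$ satisfy $\ell(uv)=\ell(u)+\ell(v)$, then $\widetilde{uv}=\widetilde u\,\widetilde v$ in $B_W^+$. The reason is that concatenating a reduced word for $u$ with a reduced word for $v$ gives a word of length $\ell(uv)$ representing $uv$. That word is therefore reduced, and any two reduced words give the same positive braid.

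\textbf{Step 1 (group identities).} Since $w_0^H$ is an involution, $x=w_0^Hw_0$ gives $w_0^H x = (w_0^H)^2 w_0 = w_0$. Likewise $y=w_0w_0^H$ gives $y\,w_0^H = w_0 (w_0^H)^2 = w_0$.

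\textbf{Step 2 (length additivity).} The preceding lemma gives
\[
\ell(w_0)=\ell(w_0^H)+\ell(x)=\ell(y)+\ell(w_0^H).
\]
Combined with Step 1, the factorizations $w_0=w_0^H\cdot x$ and $w_0=y\cdot w_0^H$ are both length-additive.

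\textbf{Step 3 (lift).} Applying the fact above to each factorization gives
\[
\Delta_H\Delta_{H\backslash G}=\widetilde{w_0^H}\,\widetilde x=\widetilde{w_0}=\Delta_G,
\qquad
\Delta_{G/H}\Delta_H=\widetilde y\,\widetilde{w_0^H}=\widetilde{w_0}=\Delta_G.
\]
This is exactly the claimed chain of equalities.

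There is no real obstacle. The only point needing care is to cite the length-additivity property of positive lifts correctly, and to note that $\Delta_H$, defined as the positive lift of $w_0^H$ inside $W$, agrees with the corresponding element of the parabolic braid submonoid. The latter holds because a reduced word for $w_0^H$ in the parabolic generators is also reduced in $W$.
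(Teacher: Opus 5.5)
Your proposal is correct and takes the same approach as the paper. The paper's proof likewise lifts the length-additive factorizations $w_0=w_0^H x=y\,w_0^H$ from the preceding lemma to $B_W^+$, using that positive lifts respect reduced products. You additionally spell out the group identities and the Matsumoto-type justification, which the paper leaves implicit.
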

\begin{proof}
This follows from the length additivity above, since positive braid lifts respect reduced products.
\end{proof}

\begin{lemma}
    The relative full-twist $\pi_{G/H}$ commutes with $\Delta_{H}$. Moreover, $\pi_{G/H}\pi_{H} = \pi_{G}$. In particular, the full twist defined from positive representatives agrees with the one defined previously. 
\end{lemma}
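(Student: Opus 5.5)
The plan is to work entirely in the Artin braid monoid and group, using the lemma just proved, $\Delta_H\Delta_{H\backslash G}=\Delta_{G/H}\Delta_H=\Delta_G$, together with the standard fact that conjugation by $\Delta=\Delta_G$ realizes the diagram automorphism induced by $w_0$. Write $\pi_{G/H}=\Delta_{H\backslash G}\Delta_{G/H}$.

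First, rewrite $\Delta_{H\backslash G}=\Delta_H^{-1}\Delta$ and $\Delta_{G/H}=\Delta\Delta_H^{-1}$. This gives
\[
\pi_{G/H}=\Delta_H^{-1}\Delta\,\Delta\,\Delta_H^{-1}=\Delta_H^{-1}\pi_G\Delta_H^{-1}.
\]
Since $\pi_G=\Delta^2$ is central in $B_W$, we can move it to the left and obtain $\pi_{G/H}=\pi_G\Delta_H^{-2}=\pi_G\pi_H^{-1}$. Two consequences follow at once. First, $\pi_{G/H}\pi_H=\pi_G$. Second, $\pi_{G/H}$ commutes with $\Delta_H$, because both $\pi_G$ and $\pi_H^{-1}=\Delta_H^{-2}$ commute with $\Delta_H$. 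The only input here is the centrality of $\Delta^2$, which I will quote from Brieskorn–Saito/Deligne: $\Delta s\Delta^{-1}=w_0sw_0$ for the simple generators $s$, so $\Delta^2$ is central.

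It remains to show that the positive full twist agrees with the geometric one. The geometric full twist $\pi_P$ is the loop $s\mapsto y_0e^{2\pi i s}$ in $\frt^{\rs}$. Take $y_0$ in a real Weyl chamber. The half-loop $s\mapsto y_0e^{\pi i s}$, $s\in[0,1]$, ends at $-y_0=w_0(y_0')$, where $y_0'=-w_0(y_0)$ lies in the same chamber. Its image in $\frc^{\rs}$ is therefore a path in $B_W$ lifting $w_0$.

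By the Brieskorn–Deligne description of generators, I would argue that this half-loop is homotopic to the positive lift $\Delta$. One way is to deform $y_0$ to $y_0'$ within the chamber, so the half-loop becomes a closed loop in $\frc^{\rs}$. Then check that it crosses each root hyperplane "positively" exactly once, and that it has length $\ell(w_0)$ in the Deligne positive-path sense. Squaring the half-loop gives $\pi_P=\Delta^2$ in $P_W$.

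Applying the same argument to the parabolic arrangement $\cA_X$, whose Weyl group is $W_H$, identifies the geometric $\pi_{P_X}$ with $\Delta_H^2$. This requires matching Paris's intersection subgroup with the standard parabolic subgroup of Brieskorn's embedding, which the paper notes agrees with \cite{BMR}. Combined with the first step, the geometric relative full twist $\pi_P\pi_{P_X}^{-1}$ then equals $\pi_{G/H}$.

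I expect the main obstacle to be this identification of the half-twist path with $\Delta$, including orientation conventions. If needed, I would fall back on the known statement (Broué–Malle–Rouquier, Lemma 2.4 and the surrounding discussion) that the loop $y_0e^{2\pi i s}$ equals $\Delta^2$, and cite it.
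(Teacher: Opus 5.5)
Your argument is correct and uses the same two inputs as the paper: the factorization $\Delta_H\Delta_{H\backslash G}=\Delta_{G/H}\Delta_H=\Delta_G$ and the centrality of $\Delta_G^2$. Solving directly for $\pi_{G/H}=\Delta_H^{-1}\Delta_G^2\Delta_H^{-1}=\pi_G\pi_H^{-1}$ is a shorter arrangement of the paper's manipulation, which first proves $\Delta_G\Delta_{G/H}=\Delta_{H\backslash G}\Delta_G$ and then computes $\pi_{G/H}\pi_H=\Delta_H\Delta_{H\backslash G}\Delta_{G/H}\Delta_H=\Delta_G^2$.

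The paper gives no argument for the final clause. It tacitly relies on two standard facts: the loop $s\mapsto y_0e^{2\pi i s}$ equals $\Delta^2$ (Brieskorn--Saito, Brou\'e--Malle--Rouquier), and intersection subgroups are conjugate to standard parabolic braid subgroups. Your half-loop sketch or your fallback citation therefore fills in what the paper leaves implicit. The sketch is cleanest via the observation that the imaginary part $\sin(\pi s)\,y_0$ stays in the open chamber, which makes the half-loop a positive lift of $w_0$.
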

\begin{proof}
First note that 
\[\Delta_{H \backslash G} \Delta_{G/H} \Delta_{H} = \Delta_{H \backslash G} \Delta_{G}\]
and
\[ \Delta_{H} \Delta_{H \backslash G} \Delta_{G/H} = \Delta_{G} \Delta_{G/H}.\]
We thus need to show $\Delta_{G} \Delta_{G/H} = \Delta_{H \backslash G} \Delta_{G}$. After multiplying by $\Delta_{G}$ on the right, we can use the fact that $\pi_{G}$ is central in the braid group. So using that $\Delta_{G} = \Delta_{G/H}\Delta_{H}$, we conclude
\[\Delta_{H \backslash G} \Delta_{G}^2 = \Delta_{G}^{2} \Delta_{H \backslash G} = \Delta_{G}\Delta_{G/H} \Delta_{H} \Delta_{H\backslash G} = \Delta_{G} \Delta_{G/H}\Delta_{G}. \]
Dividing both sides on the right by $\Delta_{G}$ proves the first claim.

To prove the second statement, we note that we can use the first claim to rewrite
\[\pi_{G/H}\pi_{H} = \Delta_{H\backslash G} \Delta_{G/H} \Delta_{H}^{2} = \Delta_{H} \Delta_{H\backslash G} \Delta_{G/H} \Delta_{H}.\]
By the definition of $\Delta_{G/H}$ and $\Delta_{H \backslash G}$, this is equal to $\Delta_{G}^2 = \pi_{G}$.
\end{proof}
\begin{cor}
    Let $W_K \subset W_H \subset W$ be parabolic subgroups. Then
    \begin{enumerate}
        \item $\pi_{G/K} = \pi_{G/H}\pi_{H/K}$.
        \item The relative full twist $\pi_{G/H}$ commutes with any braid in the parabolic subgroup $W_{H}$.
    \end{enumerate}
\end{cor}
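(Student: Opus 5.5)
Both parts follow formally from the two preceding lemmas, namely $\pi_{G/H}\pi_H=\pi_G$ with $\pi_{G/H}$ commuting with $\Delta_H$, applied to the appropriate pairs of parabolic subgroups. We also use centrality of full twists: $\pi_H=\Delta_H^2$ is central in the braid group $B_{W_H}$ of the parabolic subgroup, viewed as a subgroup of $B_W$ via positive lifts. This is the standard fact that $\Delta^2$ is central in any Artin group of finite type, applied to the Coxeter system of $W_H$.

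For (1), apply the lemma $\pi_{G/H}\pi_H=\pi_G$ three times: to the pair $W_H\subset W$, to $W_K\subset W$, and to $W_K\subset W_H$ (the last with $W_H$ playing the role of the ambient group, which is legitimate since $W_H$ is itself a finite Coxeter group and its positive braid monoid embeds in that of $W$). This gives
\[
\pi_{G/K}=\pi_G\pi_K^{-1},\qquad \pi_G=\pi_{G/H}\pi_H,\qquad \pi_H=\pi_{H/K}\pi_K .
\]
Substituting, $\pi_{G/K}=\pi_{G/H}\pi_H\pi_K^{-1}=\pi_{G/H}\pi_{H/K}\pi_K\pi_K^{-1}=\pi_{G/H}\pi_{H/K}$. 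The only point needing care is that $\Delta_{K\backslash H}$ and $\Delta_{H/K}$ computed inside $B_{W_H}$ agree with their images in $B_W$. This holds because positive lifts of reduced words are compatible with the inclusion of the parabolic braid monoid.

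For (2), write $\pi_{G/H}=\pi_G\pi_H^{-1}$. Here $\pi_G$ is central in $B_W$, and $\pi_H$ is central in $B_{W_H}$. Hence for any braid $b$ in the parabolic braid subgroup generated by the simple generators of $W_H$, we get $b\pi_{G/H}=\pi_G b\pi_H^{-1}=\pi_G\pi_H^{-1}b=\pi_{G/H}b$. This strengthens the earlier lemma, which only gave commutation with $\Delta_H$.

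The main obstacle is justifying that the parabolic braid subgroup generated by the simple reflections of $W_H$ is the Artin group of $(W_H,S_H)$, so that $\Delta_H^2$ is central in it. I would cite van der Lek's theorem (or Paris) that standard parabolic subgroups of Artin groups are themselves Artin groups, and combine it with Garside's (or Brieskorn–Saito's) result that $\Delta^2$ generates the center of a finite-type irreducible Artin group, and is central in general.
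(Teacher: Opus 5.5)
Your proposal is correct and follows the paper's route: both parts come from $\pi_{G/H}=\pi_G\pi_H^{-1}$, by telescoping for (1) and, for (2), by centrality of $\pi_G$ in $B_W$ and of $\pi_H$ in the parabolic braid subgroup. You do not actually need van der Lek's theorem: $\Delta_H^2$ is central in the abstract Artin group of $(W_H,S_H)$, and a homomorphism sends central elements to elements central in its image, so $\Delta_H^2$ is central in the subgroup of $B_W$ generated by the simple generators in $S_H$, whether or not that map is injective.
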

\begin{proof}
Both statements follow easily from $\pi_{G/H} = \pi_{G}\pi_{H}^{-1}$ for a parabolic subgroup $H$.
\end{proof}

We briefly recall the definition of the root valuation datum associated with a point
$f \in \mathfrak c^{\mathrm{rs}}(F)$ following \cite{GKM}.  After a finite ramified base change
$t = u^m$, choose a lift $\widetilde f \in \mathfrak t^{\mathrm{rs}}(\mathbb C((u)))$. For each root $\alpha \in \Phi$, set
\[d_{\widetilde f}(\alpha) = \operatorname{val}_u\langle \alpha,\widetilde f\rangle / m = \val_{t} \langle \alpha,\widetilde f\rangle.\]
The lift is well-defined only up to the Weyl group and the Galois action
$u \mapsto \zeta u$; the associated root valuation datum is the corresponding equivalence class $[d,w]$ with $W$ acting on functions $d$ by $(x.d)(\a) = d(x^{-1}(\a))$ and by conjugation on $w$. Specializing Theorem \ref{thm:braidrep} to braid groups, we obtain the following result. 
\begin{theorem}\label{thm:braidprod}
Let $f\in\frt^{\rs}(F)$ with root valuations
\[ \{d_f(\alpha) \mid \alpha \in \Phi \} = \{d_1,...,d_r \}, \]
and corresponding filtration 
\[W\supset W_1 \supset W_2 \supset \cdots \supset W_{r-1} \supset \{1\}, \] 
then the conjugacy class $\g_f = \cL_{\alg}(f) $ in $P_{W}$ contains the canonical representative
\[\pi_{W/W_1}^{d_1} \pi_{W_1/W_2}^{d_2} \dots \pi_{W_{r-2}/W_{r-1}}^{d_{r-1}}\pi_{W_{r-1}}^{d_{r}}.\]
When $f \in \frc^{\rs}(F)$ has root valuation datum $[d_f, w]$ with $\ord(w) = m$, then we can write
\[ \{d_f(\alpha) \mid \alpha \in \Phi \} = \{d_1/m,...,d_r/m \}, \]
for integers $d_1,...,d_r$ and $\g_f^m$ is conjugate to 
\[\pi_{W/W_1}^{d_1} \pi_{W_1/W_2}^{d_2} \dots \pi_{W_{r-2}/W_{r-1}}^{d_{r-1}}\pi_{W_{r-1}}^{d_{r}}.\]
\end{theorem}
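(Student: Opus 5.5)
The first claim will follow by specializing Theorem~\ref{thm:braidrep} to the root arrangement $\cA$ in $\frt$, which is essential because $G$ is simple. Three identifications are needed.

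\emph{Step 1: the filtration.} For $f\in\frt^{\rs}(F)$, the valuation filtration $V_j=\bigcap_{d_f(\alpha)>j}\ker\alpha$ consists of flats of the root arrangement. By the correspondence of \cite[\S 2 D]{BMR} between flats of a reflection arrangement and parabolic subgroups, the intersection subgroup $P_k$ of type $V_{d_k}$ is the pure braid group of the parabolic subgroup $W_k$, namely the pointwise stabilizer of $V_{d_k}$. Its reflections are exactly the roots $\alpha$ with $d_f(\alpha)>d_k$. This produces the chain $W\supset W_1\supset\cdots\supset W_{r-1}\supset\{1\}$.

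\emph{Step 2: the full twists.} I must show that the geometric relative full twist $\pi_{P_{k-1}/P_k}$ equals the Coxeter-theoretic element $\pi_{W_{k-1}/W_k}$. By the preceding Corollary, $\pi_{G/H}=\pi_G\pi_H^{-1}$, and the same identity holds geometrically by definition. It therefore suffices to identify the absolute full twists $\pi_{P_X}$ and $\Delta_{H}^2$ inside $P_W$. For $W$ itself this is the classical fact that the scaling loop $e^{i\theta}y$ in $\frt^{\rs}$ represents $\Delta^2$, as in \cite[Lemma 2.4]{BMR}. For a parabolic $W_H$, I apply the same fact to the arrangement $\cA_X$, and then use that the embedding of intersection subgroups in \cite{Par} agrees, up to conjugacy in $P_W$, with the standard parabolic embedding $B_{W_H}\subset B_W$. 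The main obstacle is compatibility of conjugations: each factor is identified only up to its own conjugation, yet the product must be conjugate to the stated word. I would handle this by choosing every base point along a single real chamber path. Concretely, I pick the base point $y_0$ in the fundamental chamber near the flag of flats, and arrange all $W_k$ to be standard parabolics simultaneously after conjugating $f$ by an element of $W$; this conjugation only permutes the values $d_f$. Then Lemma~\ref{lem:relFTloop} realizes all of the loops in nested local complements around one base point.

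\emph{Step 3: the quotient case.} For $f\in\frc^{\rs}(F)$ with root valuation datum $[d_f,w]$ and $\ord(w)=m$, I base change along $t=u^m$. The loop $f(u^m)$ lifts to $\widetilde f\in\frt^{\rs}(\CC(\!(u)\!))$, whose $u$-valuations are the integers $m\,d_f(\alpha)$, i.e. the $d_i$. From the meridian formula in Theorem~\ref{thm:Lalg}, or from naturality of $\cL_{\alg}$ in Corollary~\ref{cor:naturality} together with $u\mapsto u^m$ multiplying contact orders by $m$, the algebraic loop of $f(u^m)$ in $\frc^{\rs}$ is $\gamma_f^m$. That loop is also the image of $\gamma_{\widetilde f}\in P_W$ under the map $\frt^{\rs}\to\frc^{\rs}$. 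Applying the first part to $\widetilde f$ then gives the representative for $\gamma_f^m$, up to conjugacy in $B_W$.
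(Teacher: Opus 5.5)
Your proposal is correct and follows the paper's route. The first statement is the specialization of Theorem~\ref{thm:braidrep} to the root arrangement, with the Coxeter-theoretic relative full twists matched to the geometric ones via $\pi_{G/H}=\pi_G\pi_H^{-1}$; the quotient case, as in the paper, base changes along $t=u^m$, lifts $f(u^m)$ to $\frt^{\rs}(\CC(\!(u)\!))$, notes that its loop is $\gamma_f^m$, and applies the first part to the lift, while your choice of a single real chamber adjacent to the whole flag of flats (so all $W_k$ are standard parabolics at a common base point) makes explicit a compatibility the paper leaves implicit.
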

\begin{proof}
The only thing that remains to be shown is the second statement. It remains only to check that $f(t^{m}) \in \frt^{\rs}(F)$. The loop attached to this computes an $m$-th power of $\g_f$, and applying Theorem \ref{thm:braidrep} to it shows that its $m$-th power has the desired shape.
\end{proof}
\begin{remark} We expect that for $f\in \frc^{\rs}(F)$ with root valuation datum having non-trivial $w\neq 1$ (the twisted or non-pure case), the corresponding braid can be more explicitly described as a good position braid representative in the sense of Duan \cite{Duan}. This non-pure case will be studied in more detail in a future paper. \end{remark}

\subsection{Root valuation data determine the braid}

Even though we cannot give an explicit formula yet in the twisted case, we can still prove directly that the root valuation datum determines the conjugacy class of the associated algebraic braid. 

Let $f\in\frc^{\rs}(F)$ have root valuation datum $[d,w]$, put
$m=\ord(w)$, and let $\zeta$ be a primitive $m$-th root of unity.
After the base change $t=u^m$, choose a lift
\[
\widetilde f\in\frt^{\rs}(\CC(\!(u)\!))
\]
satisfying $\widetilde f(\zeta u)=w\widetilde f(u)$. Writing
\[
\widetilde f(u)=\sum_{k\geq k_0}v_ku^k,
\]
the equivariance condition is equivalent to
\[
wv_k=\zeta^kv_k.
\]
We therefore set
\[
\frt_k(w):=\ker(w-\zeta^k)\subset\frt,
\]
so that $v_k\in\frt_k(w)$. For every $\a\in\Phi$, write
\[
q_\a:=\val_u\langle\a,\widetilde f\rangle
=m d(\a)\in\ZZ.
\]

\begin{prop} \label{prop:root-valuations-determine-braid}
Let $f_0,f_1\in\frc^{\rs}(F)$ have the same root valuation datum.
Then  $\g_{f_0}=\cL_{\alg}(f_0)$ and $\g_{f_1}=\cL_{\alg}(f_1)$ are homotopic. In other words, the conjugacy class in $B_W$ of the braid associated with $f\in\frc^{\rs}(F)$ depends only on its root valuation datum.
\end{prop}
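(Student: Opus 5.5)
The plan is to realize both $f_0$ and $f_1$ as points of a single connected algebraic family of formal loops in $\frc^{\rs}$, and then invoke the constancy principle, Corollary~\ref{cor:constancy-components}, applied to $Y=\frc^{\rs}$. This space is smooth and locally of finite type, so Theorem~\ref{thm:alg-loop-families} applies.

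First, I normalize the lifts. Since $f_0$ and $f_1$ have the same datum $[d,w]$, after acting by $W$ on one of the lifts and possibly rechoosing $\zeta$ (Galois action $u\mapsto\zeta u$), I may choose lifts $\widetilde f_0,\widetilde f_1\in\frt^{\rs}(\CC(\!(u)\!))$ that satisfy the same equivariance $\widetilde f_i(\zeta u)=w\widetilde f_i(u)$ and have literally the same root valuation function $q_\a=md(\a)$. Writing $\widetilde f_i=\sum_k v^{(i)}_ku^k$, this gives $v^{(i)}_k\in\frt_k(w)$. As in the lemma describing valuation strata for arrangements, the condition $\val_u\langle\a,\widetilde f\rangle=q_\a$ for all $\a$ is equivalent to requiring that for each $k$, $\langle\a,v_k\rangle=0$ whenever $q_\a>k$ and $\langle\a,v_k\rangle\neq0$ whenever $q_\a=k$.

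Next, I consider the space $S$ of all such sequences $(v_k)$ with $v_k\in\frt_k(w)\cap V_k$, where $V_k$ is the valuation filtration, subject to the open conditions at $q_\a=k$. I truncate at $k>\max q_\a$, since higher coefficients range over the linear spaces $\frt_k(w)$ without constraint. For each level $k$, the admissible set is then
\[
\bigl(\frt_k(w)\cap V_k\bigr)\setminus\bigcup_{q_\a=k}\ker(\a),
\]
an open subset of a complex vector space. This set is nonempty because $\widetilde f_0$ supplies a point, and it is connected because the removed hyperplane sections are proper (each misses $v^{(0)}_k$) and hence of complex codimension $\ge1$. The full $S$ is then a finite product of such sets times an affine space, so it is connected and finite type after truncation. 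The universal series $\widetilde F=\sum_k v_ku^k$ is $w$-equivariant and lands in $\frt^{\rs}$, so it descends to a family $F\in L\frc^{\rs}(S)$: the $\mu_m$-invariant map from $\Spec A(\!(u)\!)$ factors through $\Spec A(\!(t)\!)$ into $\frt^{\rs}/W$. Both $f_0$ and $f_1$ are fibers of $F$, and Corollary~\ref{cor:constancy-components} gives $\g_{f_0}=\g_{f_1}$ in $[S^1,(\frc^{\rs})^{\an}]$, that is, conjugacy in $B_W$.

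I expect the main obstacle to be the descent step together with the normalization step. For descent, I must check that the $\mu_m$-equivariant $A(\!(u)\!)$-point really gives an $A(\!(t)\!)$-point of the quotient. I would argue via invariants: $\CC[\frc]=\CC[\frt]^W$, and $W$-invariant polynomials evaluated on $\widetilde F$ are $\mu_m$-invariant, hence lie in $A(\!(t)\!)$. The regular locus is detected by the discriminant, whose valuation is fixed. For normalization, I must ensure that equality of data $[d,w]$ allows choosing lifts with identical $w$, $\zeta$ and $d$ simultaneously. Here the care is that replacing $\widetilde f$ by $x\widetilde f$ conjugates $w$ and transforms $d$ compatibly, which is exactly the equivalence relation in the definition of the datum.
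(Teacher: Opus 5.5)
Your strategy is the paper's. You normalize the lifts to share $(d,w)$ and $\zeta$. You observe that the $w$-equivariant coefficient tuples with root valuations $q_\a$ form a product of hyperplane complements, which is the paper's $\cJ_N(d,w)$. You then descend the universal equivariant series to a family in $L\frc^{\rs}$ and apply Corollary~\ref{cor:constancy-components}. Your level-by-level description of this space is correct, and so is your invariant-theoretic descent. For the descent, the precise point is that $\prod_{\a\in\Phi}\langle\a,\widetilde F\rangle$ is $u^{q}$ times a unit of $A[[u]]$, because $\langle\a,v_{q_\a}\rangle$ is invertible on the base. It is therefore a unit of $A(\!(u)\!)$ whose inverse is again $\mu_m$-invariant, hence lies in $A(\!(t)\!)$.

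The gap is in how you dispose of the tail. Corollary~\ref{cor:constancy-components}, via Theorem~\ref{thm:alg-loop-families}, requires the base to be locally of finite type. So you cannot use the full space of sequences, whose tail factor $\prod_{k>N}\frt_k(w)$ is an infinite-dimensional affine space. Once you truncate at $N\ge\max_\a q_\a$, however, the universal family consists of Laurent polynomials. Then $f_0,f_1$ are not fibers of it unless their lifts terminate; only the truncations $f_i^{[N]}$ are. Your remark that higher coefficients are unconstrained shows the tail does not affect root valuations. It does not show the tail does not affect the algebraic loop.

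One more application of constancy fixes this. As in the paper, take the $\AA^1$-family $\widetilde f_i^{[N]}(u)+s\bigl(\widetilde f_i(u)-\widetilde f_i^{[N]}(u)\bigr)$. It is $w$-equivariant and lies in $\frt^{\rs}(\CC[s](\!(u)\!))$, because $N\ge q_\a$ keeps the leading coefficient of each $\langle\a,-\rangle$ nonzero and independent of $s$. It therefore descends and gives $\g_{f_i}=\g_{f_i^{[N]}}$. Alternatively, enlarge the base to $\cJ_N(d,w)\times\AA^1$ with family $\sum_{k\le N}v_ku^k+(1-s)\tau_0+s\tau_1$, where $\tau_i:=\widetilde f_i-\widetilde f_i^{[N]}$. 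This is a single connected finite-type family with $f_0$ and $f_1$ as fibers, by the same unit and equivariance checks you already made. With either repair your argument is complete.
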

\begin{proof}
Choose a representative $(d,w)$ of the common root valuation datum.  Put
\[
m:=\ord(w),
\]
choose a primitive \(m\)-th root of unity \(\zeta\), and, after the
base change \(t=u^m\), choose lifts
\[
\widetilde f_i(u)
=
\sum_{k\geq k_0}v_{i,k}u^k
\in\frt^{\rs}(\CC(\!(u)\!)),
\qquad i=0,1,
\]
satisfying
\[
\widetilde f_i(\zeta u)=w\widetilde f_i(u).
\]
In particular, $v_{i,k}\in\frt_k(w)$. For $\alpha\in\Phi$, write $q_\alpha:=md(\alpha)\in\ZZ$, 
and set $k_0:=\min_{\alpha\in\Phi}q_\alpha$.  Since the roots span $\frt^*$, every lift with root valuation function $d$ has an expansion
\[
\widetilde f_i(u)=\sum_{k\geq k_0}v_{i,k}u^k.
\]
  We first truncate without changing the algebraic loop, and then connect the resulting Laurent polynomials inside a connected twisted admissible deformation space. For that, choose \(N\) such that $N\geq\max_{\a\in\Phi}q_\a$, and let
\[
\widetilde f_i^{[N]}(u)
:=
\sum_{k=k_0}^{N}v_{i,k}u^k
\]
be the corresponding Laurent-polynomial truncation.  The truncation remains equivariant, since
\[
\begin{aligned}
\widetilde f_i^{[N]}(\zeta u)
&=
\sum_{k=k_0}^{N}\zeta^k v_{i,k}u^k\\
&=
\sum_{k=k_0}^{N}wv_{i,k}u^k
=
w\widetilde f_i^{[N]}(u).
\end{aligned}
\]
It also preserves all root valuations. Indeed, for every $\a\in\Phi$, the
series
\(\langle\a,\widetilde f_i^{[N]}\rangle\) contains the leading
nonzero term of
\(\langle\a,\widetilde f_i\rangle\), because \(N\geq q_\a\).

We claim that replacing \(\widetilde f_i\) by
\(\widetilde f_i^{[N]}\) does not change the associated algebraic
loop.  Consider the family
\[
\widetilde F_i(s,u)
:=
\widetilde f_i^{[N]}(u)
+s\bigl(
\widetilde f_i(u)-\widetilde f_i^{[N]}(u)
\bigr).
\]
For every root \(\a\in\Phi\), since $N \ge q_{\a}$, we have
\[
\langle\a,\widetilde F_i(s,u)\rangle
=
u^{q_\a}
\bigl(c_{i,\a}+uR_{i,\a}(s,u)\bigr),
\]
for some $c_{i,\a}\in\CC^\times$, and some $R_{i,\a}(s,u)\in \CC[s][[u]]$. In particular, its leading coefficient is non-zero and independent of $s$, so $\langle\a,\widetilde F_i(s,u)\rangle$ is a unit in $\CC[s](\!(u)\!)$. Therefore
\[
\widetilde F_i
\in
\frt^{\rs}\bigl(\CC[s](\!(u)\!)\bigr).
\]
Moreover,
\[
\widetilde F_i(s,\zeta u)
=
w\widetilde F_i(s,u).
\]
After applying the quotient map
\[
p\colon\frt^{\rs}\longrightarrow\frc^{\rs},
\]
the family therefore descends along \(t=u^m\) to an \(\AA^1\)-family
\[
F_i\in
\frc^{\rs}\bigl(\CC[s](\!(t)\!)\bigr)
=
L(\frc^{\rs})(\AA^1).
\]
Its fibers at \(s=0\) and \(s=1\) are respectively
\[
f_i^{[N]}
:=
p\bigl(\widetilde f_i^{[N]}\bigr)
\qquad\text{and}\qquad
f_i.
\]
Since \(\AA^1\) is connected, the constancy of the algebraic-loop map in connected algebraic families, Corollary~\ref{cor:constancy-components}, gives
\[
\g_{f_i^{[N]}}=\g_{f_i}.
\]
We may consequently replace $f_0,f_1$ by $f_0^{[N]},f_1^{[N]}$ and hence assume that their lifts are Laurent polynomials of degree at most $N$.

For a Laurent polynomial
\[
\widetilde g(u) = \sum_{k=k_0}^{N}v_ku^k, \qquad v_k\in\frt_k(w),
\]
the condition
\[
\ord_u\langle\alpha,\widetilde g\rangle=q_\alpha
\]
is equivalent to
\[
\langle\alpha,v_k\rangle=0 \quad\text{for }k<q_\alpha, \qquad \langle\alpha,v_{q_\alpha}\rangle\neq0.
\]
Thus, if
\[
\cV_N(d,w) = \left\{ (v_{k_0},\ldots,v_N) \in \bigoplus_{k=k_0}^{N}\frt_k(w) \ \middle|\ \langle\alpha,v_k\rangle=0 \text{ for }k<q_\alpha \right\},
\]
then the coefficient tuples defining loops with root valuation datum $(d,w)$ form the open subset
\[
\cJ_N(d,w) = \cV_N(d,w) \setminus \bigcup_{\alpha\in\Phi} \left\{ \langle\alpha,v_{q_\alpha}\rangle=0 \right\}.
\]
Since $(d,w)$ is realized, each hyperplane removed above is proper.
Hence $\cJ_N(d,w)$ is a nonempty complement of finitely many complex
linear hyperplanes and is therefore connected.

There is a universal equivariant Laurent polynomial
\[
\widetilde{\mathcal F}(u)
=
\sum_{k=k_0}^{N}v_ku^k
\]
over $\cJ_N(d,w)$. By construction, $ \widetilde{\mathcal F}(\zeta u) = w\widetilde{\mathcal F}(u)$
and $\val_u \langle\alpha,\widetilde{\mathcal F}\rangle = q_\alpha$ for every $\alpha\in\Phi$. Hence, after applying the quotient map
$p:\frt^{\rs}\to\frc^{\rs}$ and descending along $t=u^m$, it defines
an algebraic family
\[
\mathcal F\in L(\frc^{\rs})\bigl(\cJ_N(d,w)\bigr)
\]
whose fibers include $f_0^{[N]}$ and $f_1^{[N]}$. Since $\cJ_N(d,w)$ is connected, constancy of the algebraic loop map
in connected algebraic families gives
\[
\g_{f_0^{[N]}} = \g_{f_1^{[N]}}, 
\]
proving $\g_{f_0}=\g_{f_1}$.
\end{proof}

\begin{remark} The same result holds in the more general situation of a central finite hyperplane arrangement $\cA$ in a complex vector space $V$. Assume there is a finite group $\G$ acting linearly on $V$, preserving $\cA$ and such that the action on the arrangement complement is free, and denote the quotient by $X$. 

For \(f\in X(F)\), after a ramified base change \(t=u^m\), one may
choose a lift
\[
\widetilde f\in M(\mathcal A)(\CC(\!(u)\!))
\]
satisfying
\[
\widetilde f(\zeta u)=g\widetilde f(u)
\]
for some \(g\in\G\) of order \(m\), where \(\zeta\) is a primitive \(m\)-th root of unity.  Choosing defining linear forms \(\ell_H\) for \(H\in\mathcal A\), set
\[
d_f(H)
=
\frac{1}{m}\ord_u\ell_H(\widetilde f).
\]
Changing the lift by an element of \(\G\) changes the pair \((d_f,g)\) by the natural simultaneous \(\G\)-action. We call its \(\G\)-orbit the hyperplane valuation datum of \(f\).

The same proof as that of Proposition \ref{prop:root-valuations-determine-braid} shows that the loop $\g_f$ depends only on the hyperplane valuation datum of $f$. 
\end{remark}

\begin{remark}
The spaces $\cJ_N(d,w)$ have already appeared in \cite{GKM}, and under the name of twisted admissible deformation spaces in \cite{DRY}.
\end{remark}

\subsection{Horocycle correspondence and cohomology of braid varieties}

In this subsection, we explain how to derive consequences on the cohomology of certain braid varieties. Let $\cB$ be the variety of Borel subgroups of $G$ and $N$ its dimension. Then, the orbits for the diagonal $G$-action on $\cB \times \cB$ are in bijection with $W$. For $w \in W$, denote the corresponding orbit by $\bO(w)$. Two Borel subgroups $B_1,B_2$ are said to be in relative position $w$ if $(B_1,B_2)\in \bO(w)$. For a positive braid $\b = \tilde{w}_1...\tilde{w}_n$ we define
\[ Z(\beta) = \{ (B_0, ..., B_{n}, g ) \in \cB^{n+1} \times G \mid (B_{i},B_{i+1}) \in \bO(w_{i+1}), {}^{g}B_0 = B_n \}. \]
There is a natural projection map $Z(\beta) \to G$. We denote the fiber over $g \in G$ by $X(\beta,g)$, and abbreviate $X(\beta) = X(\beta,1)$. For any conjugacy class $C\subset G$ we denote by $X(\beta,C)$ the pre-image of $C$ under $Z(\b) \to G$.

\begin{remark} In type $A$, the cohomology of these spaces is related to knot invariants, and for certain braids, these spaces can be thought of as moduli spaces of Stokes filtered local systems on a disk. \end{remark}

To study their cohomology, we briefly recall the horocycle correspondence, and how the spaces $Z(\b)$ and $X(\b,g)$ are related to it. 

In the following we will work with the full derived category $D^b_G(\cB \times \cB)$ of $G$-equivariant constructible sheaves on the analytification. The horocycle correspondence is the following diagram 
\[
\xymatrix{ &  \cB \times G \ar[dl]_{f} \ar[dr]^{\varpi} &  \\
\cB \times \cB & & G
}
\]
with maps $f(B,g) = (B,{}^{g}B)$ and $\varpi(B,g) = g$ the projection to $G$. The maps are equivariant for the diagonal $G$-action on $\cB \times \cB$, the action $h.(B,g) = ({}^{h}B, hgh^{-1})$ on $\cB \times G$ and the conjugation action on $G$. 

Consider the character functor 
\begin{align*}
\chi : D^b_{G} (\cB \times \cB) \to D^b_{G}(G) \\
\chi = \varpi_{!}f^{*}.
\end{align*}

Recall that $D^b_G(\cB \times \cB)$ carries a convolution product, induced from the diagram
\[
\xymatrix{ &  \cB\times \cB\times \cB \ar[r]_{p_{13}} \ar[dl]_{p_{12}} \ar[dr]_{p_{23}} & \cB \times \cB \\
\cB \times \cB & & \cB \times \cB.
}
\]
For any two objects $K,L \in D^b_{G}(\cB \times \cB)$ their convolution is defined by
\[K * L = (p_{13})_! (p_{12}^*K \otimes p_{23}^*L )[-N]. \]

The character functor satisfies the following cyclicity property.
\begin{prop}\label{prop:cattrace}
For $K,L \in D^b_{G}(\cB \times \cB)$ there is a canonical isomorphism $\chi(K * L) \cong \chi(L*K)$.
\end{prop}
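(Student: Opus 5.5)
The plan is to compute both sides by base change and proper base change, and reduce to a symmetric geometric description. By definition, $\chi(K*L)=\varpi_!f^*(p_{13})_!(p_{12}^*K\otimes p_{23}^*L)[-N]$.

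First, form the fiber product of $f:\cB\times G\to\cB\times\cB$ and $p_{13}:\cB^3\to\cB\times\cB$. It is the space
\[
Z_{K,L}:=\{(B_0,B_1,g)\in\cB\times\cB\times G\}
\]
with the map $(B_0,B_1,g)\mapsto(B_0,B_1,{}^gB_0)$ to $\cB^3$. By proper base change (here $p_{13}$ is proper, and $!$-pushforward commutes with $*$-pullback in any case), we get
\[
\chi(K*L)\cong q_!\bigl(a^*K\otimes b^*L\bigr)[-N],
\]
where $q(B_0,B_1,g)=g$, $a(B_0,B_1,g)=(B_0,B_1)$ and $b(B_0,B_1,g)=(B_1,{}^gB_0)$. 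Symmetrically,
\[
\chi(L*K)\cong q_!\bigl(a'^*L\otimes b'^*K\bigr)[-N]
\]
on a second copy of the same space, with coordinates $(B_1',B_2',g)$, $a'=(B_1',B_2')$ and $b'=(B_2',{}^gB_1')$.

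Next, I would produce an isomorphism $\phi$ between the two spaces over $G$ that swaps the roles of $K$ and $L$, namely
\[
\phi(B_0,B_1,g)=(B_1,{}^gB_0,g).
\]
Then $a'\circ\phi=(B_1,{}^gB_0)=b$, so $\phi^*a'^*L=b^*L$. Also $b'\circ\phi=({}^gB_0,{}^gB_1)=g\cdot a(B_0,B_1,g)$, which is the diagonal action of $g$ applied to $a$. The map $\phi$ is an isomorphism commuting with $q$; its inverse is $(B_1',B_2',g)\mapsto({}^{g^{-1}}B_2',B_1',g)$. Using $\phi$, the remaining issue is to identify $\phi^*b'^*K=\mathrm{act}_g^*a^*K$ with $a^*K$.

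This identification is the main obstacle: it needs $G$-equivariance of $K$ applied to the tautological group element $g$. Concretely, consider the map
\[
m:G\times\cB\times\cB\to\cB\times\cB,\qquad (g,x)\mapsto g\cdot x.
\]
The equivariant structure gives an isomorphism $m^*K\cong\pr^*K$. Pulling this back along $(B_0,B_1,g)\mapsto(g,B_0,B_1)$ gives $(b'\circ\phi)^*K\cong a^*K$. In the equivariant derived category this is part of the data (for instance, working on the simplicial/Bernstein–Lunts models). I would check that every map used is $G$-equivariant for conjugation on $g$, so that the isomorphisms live in $D^b_G(G)$ and not just in $D^b(G)$. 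Here $G$ acts on $(B_0,B_1,g)$ by $h\cdot(B_0,B_1,g)=({}^hB_0,{}^hB_1,hgh^{-1})$, and $\phi$ intertwines these actions.

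Combining the steps gives
\[
\chi(K*L)\cong q_!\bigl(a^*K\otimes b^*L\bigr)[-N]\cong q_!\,\phi^*\bigl(b'^*K\otimes a'^*L\bigr)[-N]\cong\chi(L*K).
\]
Here we use that $q\circ\phi=q$ and $\phi$ is an isomorphism, and that the tensor product is symmetric. Canonicity follows because every isomorphism used is canonical: base change, the equivariant structure, and the symmetry of $\otimes$.
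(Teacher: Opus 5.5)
Your argument is correct. Note that the paper gives no argument of its own here: it cites Lusztig's 1.11(a) and asserts that the proof carries over to $G$-equivariant constructible sheaves on the analytification.

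Your proof consists of three steps:
\begin{enumerate}
\item base change of $f$ along $p_{13}$ to the space $\{(B_0,B_1,g)\}$;
\item the $G$-equivariant isomorphism $\phi(B_0,B_1,g)=(B_1,{}^gB_0,g)$ over $G$, which exchanges the roles of $K$ and $L$;
\item $G$-equivariance of $K$ evaluated at the tautological element $g$.
\end{enumerate}
This is the standard proof of this cyclicity. Writing it out makes the proof self-contained and shows exactly where equivariance is used, which the paper's phrase ``all arguments used remain valid in our setting'' glosses over.

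The one step to phrase more carefully is the claim that $m^*K\cong \pr^*K$ holds in $D^b_G(G\times\cB\times\cB)$ for the action $h\cdot(g,x)=(hgh^{-1},h\cdot x)$. The equivariant structure literally supplies this isomorphism only in the non-equivariant category. Its equivariant upgrade is a standard consequence, not ``part of the data''. A clean way to see it: the map $\gamma(g,x)=g$ satisfies $\gamma(h\cdot y)=h\gamma(y)h^{-1}$ and $m=\gamma\cdot \pr$. Hence $\gamma$ defines a $2$-isomorphism between the two induced morphisms of quotient stacks $[(G\times\cB^2)/G]\rightrightarrows[\cB^2/G]$, and therefore $m^*K\cong\pr^*K$ equivariantly. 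In the simplicial model, this is the statement that the cocycle condition gives the required compatibilities on the higher levels. With that in place, all your isomorphisms live in $D^b_G(G)$ rather than just $D^b(G)$, as needed for Corollary~\ref{cor:conjinv}.
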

\begin{proof}
This can be found in \cite[1.11.(a)]{Lus}, noting that all arguments used remain valid in our setting.
\end{proof}
Equivalently, $\chi$ factors through the categorical trace.

\subsection{Standard and costandard objects} We follow the notation from \cite[\S 1]{BDR}. Recall that $N = \dim\cB$. Let $B_W^+$ be the positive braid monoid. For any positive braid $\b = \tilde{w}_1...\tilde{w}_n$ we define
\[ \bO(\beta) =  \{ (B_0, ..., B_{n} ) \in \cB^{n+1} \mid (B_{i},B_{i+1}) \in \bO(w_{i+1}) \}. \]
We have a natural map $j_{\beta} : \bO(\beta) \to \cB \times \cB$ given by $ (B_0, ..., B_{n}) \mapsto (B_0,B_{n})$.
\begin{defn}
The standard and costandard objects in $D^b_{G}(\cB \times \cB)$ are 
\begin{align*}
\Delta(\beta) &= j_{\beta,!}{\QQ}[\ell(\beta)+N], \\
\nabla(\beta) &= j_{\beta,*}{\QQ}[\ell(\beta)+N]. 
\end{align*}
\end{defn}
The standard objects satisfy $\Delta(\beta \beta') = \Delta(\beta) * \Delta(\beta')$ and similarly for the costandard objects.  Moreover, denoting by $\beta \mapsto \beta^{*}$ the anti-involution on $B_{W}^{+}$ which for $\beta = \tilde{s}_{i_{1}}\dots \tilde{s}_{i_{r}}$ satisfies $\beta^{*} = \tilde{s}_{i_{r}}\dots \tilde{s}_{i_{1}}$, we have isomorphisms
\[\Delta(\beta) * \nabla(\beta^{*}) \cong \Delta(1) \cong \nabla(\beta^{*}) *\Delta(\beta).\]
In this way one can define the object $\Delta(\beta)$ for any braid $\beta \in B_{W}$, and such that it satisfies $\Delta(\beta^{-1}) = \nabla(\beta^{*})$ whenever $\beta\in B_{W}^{+}$. 

\begin{lemma}
Let $\beta$ be a positive braid, and $g\in G$. Then we have isomorphisms 
\begin{enumerate}
\item $R\Gamma_{c}(G,\chi(\Delta(\b))) \cong \cohoc{*+\ell(\beta)+N}{Z(\beta),\QQ}$,
\item $R\Gamma(G,\chi(\nabla(\b))) \cong \cohog{*+\ell(\beta)+N}{Z(\beta),\QQ}$,
\item $\chi(\Delta(\b))_{g} \cong \cohoc{*+\ell(\beta)+N}{X(\b,g), \QQ}$, 
\item $\chi(\nabla(\b))_{g} \cong \cohog{*+\ell(\beta)+N}{X(\b,g),\QQ}$,
\end{enumerate}
where the subscript denotes the stalk at $g$.
\end{lemma}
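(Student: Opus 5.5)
We prove each isomorphism by computing the relevant fibre products and then applying base change. Everything rests on a single identification. For a positive braid $\beta=\widetilde w_1\cdots\widetilde w_n$, the fibre product of $j_\beta\colon\bO(\beta)\to\cB\times\cB$ with the horocycle map $f\colon\cB\times G\to\cB\times\cB$, $(B,g)\mapsto(B,{}^gB)$, consists of tuples $(B_0,\ldots,B_n)$ and pairs $(B,g)$ with $B_0=B$ and $B_n={}^gB$. Forgetting the redundant $B$ gives a canonical isomorphism
\[
\bO(\beta)\times_{\cB\times\cB}(\cB\times G)\;\cong\;Z(\beta).
\]
Under this isomorphism, the composite to $G$ is the projection $Z(\beta)\to G$. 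The isomorphism is $G$-equivariant, and all maps are algebraic, so analytification commutes with these fibre products.

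For (1) and (3), apply proper base change to the cartesian square. Let $j'\colon Z(\beta)\to\cB\times G$ be the base change of $j_\beta$, and let $p\colon Z(\beta)\to G$ be the projection $(B_0,\ldots,B_n,g)\mapsto g$, so that $p=\varpi\circ j'$. Base change gives
\[
f^*j_{\beta,!}\QQ\cong j'_!\QQ,
\qquad
\chi(\Delta(\beta))=\varpi_!f^*\Delta(\beta)\cong p_!\QQ_{Z(\beta)}[\ell(\beta)+N].
\]
Taking $R\Gamma_c(G,-)$ yields $R\Gamma_c(Z(\beta),\QQ)[\ell(\beta)+N]$, which is (1). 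For (3), proper base change for $p_!$ along $\{g\}\hookrightarrow G$ identifies the stalk with $R\Gamma_c(p^{-1}(g),\QQ)[\ell(\beta)+N]$. Since $p^{-1}(g)=X(\beta,g)$, this is (3).

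For (2) and (4), the costandard objects involve $j_{\beta,*}$, so proper base change does not apply directly. We plan to handle this in two steps.

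First, we need $f^*j_{\beta,*}\cong j'_*f^*$. The map $f$ is smooth: it is a locally trivial fibration with fibre the stabilizer of a Borel subgroup, since it is $G$-equivariantly isomorphic to $G\times^{B}\cB\to\cB\times\cB$. Smooth base change therefore gives the isomorphism.

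Second, we must pass from the resulting $\varpi_!$ to $\varpi_*$. Here $\varpi\colon\cB\times G\to G$ has proper fibre $\cB$, so $\varpi_!=\varpi_*$. Hence
\[
\chi(\nabla(\beta))\cong \varpi_*j'_*\QQ[\ell(\beta)+N]=p_*\QQ_{Z(\beta)}[\ell(\beta)+N],
\]
and taking $R\Gamma(G,-)$ gives (2).

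We expect the stalk statement (4) to be the main obstacle, because $p_*$ does not commute with restriction to a point in general. Our first attempt is to use $G$-equivariance: $p$ is equivariant for the conjugation action, and its fibres vary in a controlled way along conjugacy classes. One could then try to show that $(p_*\QQ)_g\cong R\Gamma(X(\beta,g),\QQ)$ by a contraction argument, using a $\Gm$-action contracting a transversal slice to the class of $g$, so that a hyperbolic or contraction lemma identifies the stalk with sections over a small neighbourhood and then with sections over the fibre.

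If this does not work, the fallback is to replace $\nabla(\beta)$ by $\Delta(\beta)$ via Verdier duality. Duality exchanges the two objects up to shift, since $\bO(\beta)$ is smooth of dimension $\ell(\beta)+N+\dim\cB$. It also commutes with $\chi$ up to a twist, because $f$ is smooth and $\varpi$ is proper. Stalks of a dual then become costalks of $\chi(\Delta(\beta))$, and these are computed by base change along $\{g\}\hookrightarrow G$ for $!$-restriction, giving the compactly supported cohomology of $X(\beta,g)$ dualized. When $X(\beta,g)$ is smooth, this equals its ordinary cohomology up to shift. We would verify that the shifts match $\ell(\beta)+N$ as stated.
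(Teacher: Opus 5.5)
Your proofs of (1)--(3) coincide with the paper's. Both use the cartesian square $Z(\beta)\cong\bO(\beta)\times_{\cB\times\cB}(\cB\times G)$, base change for $j_{\beta,!}$, smooth base change along $f$ for $j_{\beta,*}$, and $\varpi_!=\varpi_*$. The gap is (4): you do not prove it, and neither proposed route can work. In the contraction route, the last step identifies $R\Gamma(p^{-1}(U),\QQ)$ for small neighbourhoods $U\ni g$ with $R\Gamma(p^{-1}(g),\QQ)$. That is a properness statement, and $p=\varpi\circ j'$ is not proper because $j_\beta$ is not. In the duality route, ``base change for $!$-restriction'' does not apply to $\chi(\Delta(\beta))=p_!\QQ[\ell(\beta)+N]$. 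The base change isomorphisms pair $i^*$ with $p_!$ and $i^!$ with $p_*$, and for non-proper $p$ there is no isomorphism $i_g^!\,p_!\cong p'_!\,i'^!$ in general. Even granting one, $i'^!\QQ_{Z(\beta)}$ is a shifted constant sheaf only if $p$ is smooth along $X(\beta,g)$; smoothness of $X(\beta,g)$ alone does not give this. (Also, $\dim\bO(\beta)=\ell(\beta)+N$, so $\mathbb D\Delta(\beta)\cong\nabla(\beta)$ with no extra shift.)

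In fact (4) is false as stated, so no argument can close this gap. The paper's own proof passes over exactly the point you flagged with ``otherwise the calculation is the same''. Take $G=\mathrm{SL}_2$, $\cB=\mathbb P^1$, $\beta=\widetilde s$ and $g=1$. Then $X(\widetilde s,1)=\{B\mid (B,B)\in\bO(s)\}=\varnothing$. On the other hand, $\varpi$ is proper and $f|_{\cB\times\{1\}}$ is the diagonal $\delta$, so $\chi(\nabla(\widetilde s))_1\cong R\Gamma(\cB,\delta^*j_{s,*}\QQ)[2]$. Here $R\Gamma(\cB,\delta^*j_{s,*}\QQ)$ is the cohomology of a punctured tubular neighbourhood of the diagonal, i.e.\ of the circle bundle of $T\mathbb P^1=\cO(2)$, which is $\QQ\oplus\QQ[-3]\neq 0$. (Part (3) survives, since $\delta^*j_{s,!}\QQ=0$.)

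What fails is the base change $\Gamma_g^*j_{\beta,*}\QQ\to(j_{\beta,g})_*\QQ$ along the graph $\Gamma_g\colon\cB\to\cB\times\cB$, $B\mapsto(B,{}^gB)$, where $j_{\beta,g}\colon X(\beta,g)\to\cB$ is the base change of $j_\beta$. Suppose $\Gamma_g$ is transverse to every orbit $\bO(w)$, which holds for $g$ regular semisimple. Then this map is an isomorphism by non-characteristic base change, and (4) follows from proper base change for $\varpi$. In general, (4) needs such a hypothesis on $g$.
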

\begin{proof} 
All of these are simple base-change calculations (note $f$ is smooth and $\varpi$ is proper). We suppress the cohomological shifts in this calculation. Recall $\chi(\Delta(\b)) = \varpi_{!}f^{*} j_{\b,!}\QQ$. Consider the diagram
\[
\xymatrix{
Z(\b)\ar[r]^{j_{\b}'}\ar[d]^{f'} & \cB \times G \ar[d]_{f} \ar[dr]^{\varpi} & \\
\bO(\b) \ar[r]^{j_{\b}} & \cB \times \cB & G,
}
\]
where the square is Cartesian (this can be checked easily). Let $\varpi' = \varpi\circ j_{\b}'$. Then applying base-change gives
\[
\chi(\Delta(\b)) = \varpi_{!}f^{*} j_{\b,!}\QQ = \varpi_{!}'\QQ.
\]
Pushing further along the structural map $G \to \pt$ proves (1). Moreover
\[ \chi(\Delta(\b))_{g} = (\varpi_{!}'\QQ)_g = \cohoc{*}{X(\b,g),\QQ}. \]
To get the similar statements for $\nabla(\b)$ one uses smooth-base change along $f$, and one uses that $\varpi$ is proper. Otherwise the calculation is the same.
\end{proof}

Since cohomology of $Z(\b)$ and related spaces is computed from $\chi(\Delta(\b))$ and $\chi(\nabla(\b))$, the cyclicity property of $\chi$ in Proposition \ref{prop:cattrace} implies the following conjugacy-invariance statements for cohomology.

\begin{cor}\label{cor:conjinv}
Assume $\b$ and $\b'$ are two conjugate positive braids. Then there are isomorphisms
\[
\chi(\Delta(\beta))\cong \chi(\Delta(\beta')), 
\qquad
\chi(\nabla(\beta))\cong \chi(\nabla(\beta'))
\]
in $D^b_G(G)$. Consequently, compactly supported and ordinary cohomology of $Z(\beta)$, of the fibers $X(\beta,g)$, and of their equivariant versions over conjugacy classes depend only on the conjugacy class of the positive braid $\beta$.
\end{cor}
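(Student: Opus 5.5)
The plan is to reduce everything to Proposition~\ref{prop:cattrace} by turning conjugacy of braids into a cyclic reordering of a convolution product. Suppose $\beta'=\gamma\beta\gamma^{-1}$ for some $\gamma\in B_W$. Using the extension of $\Delta(-)$ to all of $B_W$ recalled above, I will show that $\beta\mapsto\Delta(\beta)$ is monoidal on the whole braid group: $\Delta(\beta_1\beta_2)\cong\Delta(\beta_1)*\Delta(\beta_2)$ for arbitrary $\beta_1,\beta_2\in B_W$. This should follow by writing each braid as a word in the $\tilde s_i^{\pm1}$, using multiplicativity of the standard objects on $B_W^+$, and using the inversion isomorphisms $\Delta(\tilde s)*\nabla(\tilde s)\cong\Delta(1)\cong\nabla(\tilde s)*\Delta(\tilde s)$ together with associativity of $*$ and the fact that $\Delta(1)$ is the monoidal unit. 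Cyclicity then gives
\[
\chi(\Delta(\beta'))\cong\chi\bigl(\Delta(\gamma)*(\Delta(\beta)*\Delta(\gamma^{-1}))\bigr)\cong\chi\bigl(\Delta(\beta)*\Delta(\gamma^{-1})*\Delta(\gamma)\bigr)\cong\chi(\Delta(\beta)).
\]

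For the costandard objects I expect two possible routes. The first is to run the same argument with $\nabla$, which is monoidal on $B_W^+$ and extends by $\nabla(\beta^{-1})=\Delta(\beta^*)$. Under this extension the map $\beta\mapsto\nabla(\beta)$ is again monoidal, or anti-monoidal composed with ${}^*$. In the anti-monoidal case, conjugacy of $\beta,\beta'$ implies conjugacy of $\beta^*,\beta'^*$, so the cyclicity argument still applies. The second route is to use Verdier duality: $\mathbb D\chi(K)\cong\chi(\mathbb DK)$ up to shift, because $\varpi$ is proper and $f$ is smooth, and $\mathbb D\Delta(\beta)\cong\nabla(\beta)$. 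I would try the first route first, since it stays within the formalism already set up.

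The cohomological consequences then follow from the preceding lemma. Taking $R\Gamma_c(G,-)$, $R\Gamma(G,-)$ and stalks at $g$ of the isomorphic complexes gives isomorphisms of the cohomology of $Z(\beta)$ and $Z(\beta')$, and of $X(\beta,g)$ and $X(\beta',g)$. Here we use $\ell(\beta)=\ell(\beta')$, since length is a homomorphism $B_W\to\ZZ$ and hence a conjugation invariant, so the shifts agree. For $X(\beta,C)$ we restrict $\chi(\Delta(\beta))$ to $C$ and apply base change along $C\hookrightarrow G$ in the same Cartesian diagram. Equivariant versions follow because all isomorphisms live in $D^b_G(G)$.

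I expect the main obstacle to be making the monoidality of $\Delta$ on $B_W$ canonical. The difficulty is checking that the choice of word for the conjugating element $\gamma$ does not matter, i.e.\ that the braid relations hold up to isomorphism for the extended objects. However, only existence of some isomorphism is needed here, not coherence, so working with a fixed word for $\gamma$ should suffice.
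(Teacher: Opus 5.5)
Your argument for the two isomorphisms in $D^b_G(G)$ is the one the paper intends: the paper only says that cyclicity of $\chi$ (Proposition~\ref{prop:cattrace}) implies the corollary, and you write $\Delta(\beta')\cong\Delta(\gamma)*\Delta(\beta)*\Delta(\gamma^{-1})$ and then cycle. The details you add are correct.
\begin{itemize}
\item Extending $\Delta$ to all of $B_W$ does not depend on fixing a word for $\gamma$. What makes it work is that isomorphism classes form a monoid under $*$ in which the defining relations of $B_W$ hold. The braid relations hold by multiplicativity on $B_W^+$, and $\tilde s\tilde s^{-1}=1$ holds by $\Delta(\tilde s)*\nabla(\tilde s)\cong\Delta(1)\cong\nabla(\tilde s)*\Delta(\tilde s)$. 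So $\beta\mapsto[\Delta(\beta)]$ is a homomorphism on $B_W$, and no coherence is needed.
\item Your observation $\ell(\beta)=\ell(\beta')$ is genuinely needed to match the shifts.
\item Both of your routes for $\nabla$ work. For the duality route, $\mathbb D\Delta(\beta)\cong\nabla(\beta)$ because $\dim\bO(\beta)=N+\ell(\beta)$.
\end{itemize}

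The step that fails is deducing the \emph{ordinary} cohomology of $X(\beta,g)$ from stalks of $\chi(\nabla(\beta))$. You take this from part (4) of the preceding lemma, and that part is not correct as stated. Base change for $\varpi_!$ gives $\chi(\nabla(\beta))_g\cong R\Gamma(\cB,\Gamma_g^*j_{\beta,*}\QQ)[\ell(\beta)+N]$, where $\Gamma_g(B)=(B,{}^gB)$. However, $\Gamma_g^*j_{\beta,*}\QQ$ need not equal the $*$-pushforward of $\QQ$ along $X(\beta,g)\to\cB$. Base change for $j_{\beta,*}$ along the closed embedding $\Gamma_g$ needs a transversality to the $G$-orbits that fails in general.

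A concrete counterexample: take $G=\mathrm{SL}_2$, $\beta=\tilde s$, $g=1$.
\begin{itemize}
\item $X(\tilde s,1)=\varnothing$, so the lemma would give zero.
\item $\Gamma_1$ is the diagonal of $\mathbb P^1\times\mathbb P^1$, and $\mathcal H^0(\Gamma_1^*j_*\QQ)=\QQ$ because the link of the diagonal is a circle.
\item Hence the stalk $\chi(\nabla(\tilde s))_1$ is nonzero.
\end{itemize}
The same problem arises when you restrict $\chi(\nabla(\beta))$ to a conjugacy class $C$.

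Your argument, and the paper's, therefore establishes:
\begin{itemize}
\item the isomorphisms in $D^b_G(G)$;
\item $H^*_c$ and $H^*$ of $Z(\beta)$, via $R\Gamma_c(G,-)$ and $R\Gamma(G,-)$;
\item $H^*_c$ of $X(\beta,g)$ and $X(\beta,C)$ and their equivariant analogues, via $!$-base change from $\chi(\Delta(\beta))$.
\end{itemize}
The claim about ordinary cohomology of the fibres needs a separate argument, for example Poincar\'e duality from the $H^*_c$ statement when the fibres are smooth.
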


\subsection{Valuation data dependence}

There are two cases in which we can naturally attach a positive braid to $f\in \frc^{\rs}(F)$. We recall how to construct this positive representative briefly, following \cite[\S 4.3.2]{BBMY}. We need to introduce a slight generalization of Stokes directions for this. 
 
\begin{defn}
Let $f\in \frc^{\rs}(F)$, and choose a lift
\[
\widetilde f\in \frt^{\rs}(\CC(\!(s)\!))
\]
after the base change $t=s^m$. For each $\alpha\in\Phi$, write
\[
\langle \alpha,\widetilde f\rangle(s) = a_\alpha s^{q_\alpha}+\cdots, \qquad a_\alpha\in\CC^\times, \]
and set $d_\alpha:=\frac{q_\alpha}{m}$. Thus $d_\alpha$ is the root valuation of $\alpha$ with respect to $f$.

For a level $q\in\ZZ$, define the set of critical directions at level $q$ by
\[
\cS_f(q) := \left\{ \theta\in[0,1) \ \middle|\ \exists\,\alpha\in\Phi \text{ with } q_\alpha=q \text{ and } \Re\!\left(a_\alpha e^{2\pi i q_\alpha\theta}\right)=0 \right\}. \]
The set of critical directions of $f$ is
\[ \cS_f := \bigcup_{q\in\ZZ}\cS_f(q). \]
\end{defn}
\begin{remark}
\begin{enumerate} 
\item When $f \in \frc^{\rs}(F) \cap \frc(t^{-1}\CC[t^{-1}])$, then $f$ is essentially an irregular class, and the critical directions of $f$ coincide with the Stokes directions. 
\item The above definition has a generalization for quotients of very affine varieties. Let $Y\subset T$ be very affine with free action of a finite group $G$. For $f\in Y(F)$ let $w=w_f \in \Trop(Y)(\ZZ)$ be the corresponding tropical point and $c_{\widetilde{f}} \in \init_{w}(Y)$ the central point in the Gröbner degeneration, then for every character $\chi \in X^*(T)$ one has 
\[ \chi(\widetilde{f}(t)) = \chi(c_{\widetilde{f}}) t^{\langle \chi,w\rangle} + \cdots \]
We can define critical directions for every $\chi \in X^*(T)$ in this way. These are independent of the choice of lift $\widetilde{f}$. Putting $Y= \frt^{\rs}, G=W$ and picking the coordinate characters on the intrinsic torus $T$ given by roots gives back the previous definition.
\end{enumerate}
\end{remark}

\begin{prop}
Let $f\in \frc^{\rs}(F)$, assume that $d_f(\a) \neq 0$ for all roots
$\a$ (so the set $\cS_f$ is finite), and let $r$ be the number of
critical directions of $f$. Choose a base-direction $\theta_0$ on $S^1$.
Then there is a canonical sequence $(C_1,\ldots,C_r)$ of Weyl chambers
determined by $f$ (up to diagonal $W$-action). Moreover, let
$w_i=\mathrm{relpos}(C_i,C_{i+1})$ for $i=1,\ldots,r-1$, and let
$w_r=\mathrm{relpos}(C_r,C_1)$. Then, if all root valuations of $f$ are
positive (resp. negative), the conjugacy class $\g_f$ (resp.
$\g_f^{-1}$) contains the positive braid
\[
\b_f^+:=\mathbf w_1\cdot\ldots\cdot\mathbf w_{r-1}\mathbf w_r .
\]
\end{prop}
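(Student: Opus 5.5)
The plan is to compute $\g_f$ from the explicit convergent model of $f$ given by its leading terms, and then read off the braid by tracking which Weyl chamber the real part of the loop lies in as the angle varies. Fix a lift $\widetilde f\in\frt^{\rs}(\CC(\!(s)\!))$ with $t=s^m$. By the truncation argument in the proof of Proposition~\ref{prop:root-valuations-determine-braid}, together with Corollary~\ref{cor:constancy-components}, I may replace $\widetilde f$ by a Laurent polynomial with the same root valuation datum. The loop $\g_f^m$ is then represented by $\theta\mapsto \widetilde f(\rho e^{2\pi i\theta})$ for small $\rho>0$, and $\g_f$ by the restriction to $\theta\in[0,1/m]$ followed by the identification via $w$. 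Here I will use the standard model $\frc^{\rs}\simeq \frt^{\rs}/W$, and will represent braids by paths in $\frt^{\rs}$ whose real parts cross walls, in the Brieskorn/Deligne manner.

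The chambers are defined as follows. For $\theta$ not critical, consider the real vector $\Re\,\widetilde f(\rho e^{2\pi i\theta})$. Its pairing with $\alpha$ is $\Re(a_\alpha \rho^{q_\alpha}e^{2\pi i q_\alpha\theta})+O(\rho^{q_\alpha+1})$. Because the leading coefficient $\Re(a_\alpha e^{2\pi iq_\alpha\theta})$ is nonzero off $\cS_f$, for small $\rho$ this vector lies in a Weyl chamber $C(\theta)$ that is locally constant on $S^1\setminus\cS_f$. Listing the complementary arcs starting from $\theta_0$ gives $(C_1,\ldots,C_r)$; changing the lift acts on this list diagonally by $W$. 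This sequence is canonical because the signs $\operatorname{sgn}\Re(a_\alpha e^{2\pi i q_\alpha\theta})$ depend only on the leading coefficients, that is, on the central point $c_{\widetilde f}$, uniformly in $\rho$. The hypothesis $d_f(\alpha)\neq0$ ensures that each leading phase actually rotates, so that $\cS_f$ is finite.

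Next I decompose the loop. As $\theta$ crosses a critical direction, exactly the roots $\alpha$ with $\Re(a_\alpha e^{2\pi iq_\alpha\theta})=0$ change sign. I must show that, in $B_W$, the arc of the loop between consecutive regular directions is the positive lift $\mathbf w_i$ of $\mathrm{relpos}(C_i,C_{i+1})$ when all $q_\alpha>0$, and its inverse when all $q_\alpha<0$. The standard criterion is that a path in $\frt^{\rs}$ from a point over $C$ to a point over $C'$ whose imaginary part, projected against the crossed walls, has a fixed sign at each crossing represents the positive lift of $\mathrm{relpos}(C,C')$. Rather than check this crossing by crossing, I would homotope the small loop by the straight-line deformation in the leading coefficients, as in Lemma~\ref{lem:central-loop}. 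This reduces the claim to the model $\theta\mapsto\sum_\alpha$-graded pieces $\sum_k e^{2\pi i k\theta}\rho^k v_k$. For such a model, the derivative of $\langle\alpha,\cdot\rangle$ at a zero of its real part is $2\pi i q_\alpha$ times the value, so its imaginary part has sign $\operatorname{sgn}(q_\alpha)$ times a fixed orientation. This gives positivity of every crossing when all $q_\alpha>0$, and of every crossing of $\g_f^{-1}$ when all $q_\alpha<0$. Concatenating the $r$ arcs, with the last one returning to $C_1$ after identifying by $w$ in the twisted case, yields $\mathbf w_1\cdots\mathbf w_r$ up to conjugacy; conjugacy accounts for the choice of $\theta_0$.

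The main obstacle is that one arc may pass several walls at the same instant, namely several roots sharing a critical direction, possibly at different levels $q$. I then need the path to be the positive lift of the full relative position, rather than merely some positive word. I would handle this by perturbing the path within the same homotopy class so that the walls are crossed one at a time, all positively. A product of positive simple crossings from $C_i$ to $C_{i+1}$ that crosses each separating wall exactly once is a reduced word for $\mathrm{relpos}(C_i,C_{i+1})$, hence equals $\mathbf w_i$. To see that each separating wall is crossed exactly once, I would use the fact that on a short arc each real part $\Re(a_\alpha e^{2\pi iq_\alpha\theta})$ is monotone near its zero.
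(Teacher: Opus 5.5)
Your chamber construction is the paper's. For $m=1$ with all $q_\alpha>0$, your crossing analysis is a sound and more explicit substitute for the paper's bare appeal to Deligne's gallery description. Two small points there: the identity $h'=2\pi i q_\alpha h$ holds exactly only for the leading monomial, but it holds along the actual loop up to $O(\rho^{q_\alpha+1})$, which suffices; and a small \emph{real} perturbation separates simultaneous crossings without changing any imaginary parts.

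The twisted case, however, fails as you write it. The critical directions are taken for $\theta\in[0,1)$ in the variable $s$, so the $r$ arcs cover the whole $s$-circle, over which the loop represents $\g_f^m$, as you note yourself. Your description of $\g_f$ as the restriction to $[0,1/m]$ involves only $r/m$ arcs, since $\cS_f$ is invariant under $\theta\mapsto\theta+1/m$. That path also ends in $C_{r/m+1}=wC_1$ (for $\zeta=e^{2\pi i/m}$), not in $C_1$. In fact $\b_f^+$ is read off a closed gallery, hence is pure, whereas $\g_f$ maps to the class of $w\neq 1$ in $W$. For example, in type $A_1$ with $f=t\in\frc^{\rs}(F)$ one gets $m=2$, $\widetilde f=s$, $\cS_f=\{1/4,3/4\}$ and $\b_f^+=\sigma^2$, while $\g_f=\sigma$. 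What your argument actually proves for $m>1$ is $\g_f\sim\mathbf w_1\cdots\mathbf w_{r/m}$. Consequently $\b_f^+=(\mathbf w_1\cdots\mathbf w_{r/m})^m$ represents $\g_f^m$. The formula as stated can only hold for $m=1$, and the paper's one-line proof does not address this either.

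The negative case has a separate gap. Traversing $\g_f^{-1}$ means decreasing $\theta$, so the chambers are met in the order $C_1,C_r,C_{r-1},\ldots,C_2,C_1$. The crossings are now positive, so the arc from $C_{i+1}$ to $C_i$ is the positive lift of $\mathrm{relpos}(C_{i+1},C_i)=w_i^{-1}$. Concatenation therefore yields $\mathbf{w_r^{-1}}\cdots\mathbf{w_1^{-1}}=(\b_f^+)^*$, the reversed word, rather than $\b_f^+$. A braid need not be conjugate to its reverse, so you need an extra argument. For $m=1$ one is available from Theorem~\ref{thm:braidprod}. Up to conjugation, $\g_f^{-1}$ is a product of powers of the pairwise commuting full twists $\Delta_{W_k}^2$ of a chain of parabolic subgroups (standard after conjugating). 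Each of these is fixed by $*$ because longest elements are involutions. Hence $(\g_f^{-1})^*\sim\g_f^{-1}$, and applying the anti-involution $*$ to $(\b_f^+)^*\sim\g_f^{-1}$ gives $\b_f^+\sim\g_f^{-1}$.
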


\begin{proof}
Choose a lift
\[
\widetilde f\in \frt^{\rs}(\CC(\!(s)\!))
\]
after the base change $t=s^m$, and write
\[
\langle\a,\widetilde f\rangle(s) = a_\a s^{q_\a}+\cdots .
\]
Let $J_1,\ldots,J_r$ be the connected components of $S^1\setminus\cS_f$, ordered counter-clockwise, and such that $J_1$ contains $\theta_0$. For each $k$, by definition we have
\[
h_\a(\theta) = \mathrm{Re}\bigl(a_\a e^{2\pi i q_\a\theta}\bigr) \neq 0
\]
for all roots $\a$ and all $\theta\in J_k$. Moreover, the sign of this function is constant on $J_k$, hence determines a map
\[
\sigma_k:\Phi\to\{\pm1\}.
\]
We show that this map comes from a unique Weyl chamber. By construction, for any $\theta\in J_k$ we have
\[
r^{-q_\a} \mathrm{Re}\bigl(\langle\a,\widetilde f(re^{2\pi i\theta})\rangle\bigr) \to \mathrm{Re}\bigl(a_\a e^{2\pi i q_\a\theta}\bigr)
\]
as $r\to0$. Since $r^{-q_\a}$ is real and positive, the sign is not
affected by it. Hence
\[
\sigma_k(\a) = \sgn\!\left( \mathrm{Re}\bigl(\langle\a,\widetilde f(re^{2\pi i\theta})\rangle\bigr) \right)
\]
for sufficiently small $r>0$. Therefore $\sigma_k$ determines the Weyl
chamber containing the regular point
\[
x_{\theta,k,r}
:=
\mathrm{Re}\bigl(\widetilde f(re^{2\pi i\theta})\bigr).
\]
This gives the sequence of chambers $C_1,\ldots,C_r$. Choosing another lift of $f$ only changes the sequence by some $w\in W$ acting diagonally. In particular, the relative positions remain unchanged.

The second statement about the conjugacy classes containing the respective positive braids follows from the usual description of the braid group in terms of galleries, cf.~\cite{Deligne}.
\end{proof}
\begin{remark}
\begin{enumerate}
\item The dependence on $\theta_0$ is mild. It only fixes a starting sector, and any other choice is related to this choice by a cyclic permutation of $(C_1,\ldots,C_r)$.
\item When $f \in \frc^{\rs}(F) \cap \frc(\CC[t^{-1}])$ without constant term, the above construction is already contained in \cite[\S 4.3.2]{BBMY}. 

\end{enumerate}
\end{remark}
The following is now an immediate consequence of Proposition \ref{prop:root-valuations-determine-braid} and Corollary \ref{cor:conjinv}.
\begin{cor} \label{c:braidVar}
Let $f, f' \in \frc^{\rs}(F)$ with all root valuations positive (resp. negative). Assume that $f$ and $f'$ have the same root valuation datum. 
\begin{enumerate} 
\item The positive braid $\b_{f}^{+}$ is conjugate to $\b_{f'}^{+}$, and consequently $\chi(\Delta(\b_{f}^{+})) \cong \chi(\Delta(\b_{f'}^{+}))$ as well as $\chi(\nabla(\b_{f}^{+})) \cong \chi(\nabla(\b_{f'}^{+}))$. Therefore the (compactly supported) cohomology of $Z(\b_{f}^{+})$ is isomorphic to that of $Z(\b_{f'}^{+})$. The same is true for $X(\b_{f}^+,g)$ and $X(\b_{f'}^+,g)$, and the equivariant versions.
\item In the case $\b_f$ is pure, we have 
\[ \chi(\Delta(\b_{f}^{+})) \cong \chi(\Delta(\pi_{W/W_1}^{d_1} \pi_{W_1/W_2}^{d_2} \dots \pi_{W_{r-2}/W_{r-1}}^{d_{r-1}}\pi_{W_{r-1}}^{d_{r}})), \]
for $d_j$ the root valuations of $f$ and associated filtration of $W$, and similarly for $\nabla$. 
\end{enumerate}
\end{cor}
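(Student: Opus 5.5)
The plan is to chain Proposition~\ref{prop:root-valuations-determine-braid} with the preceding proposition on positive braid representatives, and then apply Corollary~\ref{cor:conjinv}.

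For part (1), assume first that all root valuations are positive; the negative case is handled at the end. By the preceding proposition, $\beta_f^+$ is a positive braid lying in the conjugacy class $\gamma_f=\cL_{\alg}(f)$ of $B_W$, and likewise $\beta_{f'}^+\in\gamma_{f'}$. Since $f$ and $f'$ have the same root valuation datum, Proposition~\ref{prop:root-valuations-determine-braid} gives $\gamma_f=\gamma_{f'}$ as conjugacy classes in $B_W$. Hence $\beta_f^+$ and $\beta_{f'}^+$ are conjugate in $B_W$. Corollary~\ref{cor:conjinv} then yields isomorphisms of $\chi(\Delta(-))$ and $\chi(\nabla(-))$ in $D^b_G(G)$. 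The preceding lemma computing $R\Gamma_c$, $R\Gamma$ and the stalks of these objects then transfers this to (compactly supported) cohomology of $Z(\beta)$, of $X(\beta,g)$, and of $X(\beta,C)$.

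One point needs care: conjugacy is taken in $B_W$, whereas Corollary~\ref{cor:conjinv} is phrased for positive braids. Its proof goes through $\chi(K*L)\cong\chi(L*K)$ with $K,L$ allowed to be arbitrary braid objects $\Delta(\beta)$, $\beta\in B_W$. So if $\beta'=\alpha\beta\alpha^{-1}$, then
\[
\Delta(\beta')=\Delta(\alpha)*\bigl(\Delta(\beta)*\Delta(\alpha^{-1})\bigr),
\]
and cyclicity gives $\chi(\Delta(\beta'))\cong\chi(\Delta(\beta))$. For $\nabla$ we use $\nabla(\beta)=\Delta((\beta^{*})^{-1})$ and conjugacy of $(\beta^*)^{-1}$ and $(\beta'^*)^{-1}$. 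I expect this to be the main point to verify, namely that conjugacy by non-positive braids is compatible with the monoidal extension of $\Delta$ to $B_W$, together with the Rouquier-type relations $\Delta(\beta)*\nabla(\beta^*)\cong\Delta(1)$.

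The negative case is the same argument with $\gamma_f^{-1}$ in place of $\gamma_f$: equality of the classes $\gamma_f=\gamma_{f'}$ implies equality of their inverses.

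For part (2), $\beta_f$ is pure, so $w=1$ and Theorem~\ref{thm:braidprod} applies with $m=1$. It says that the conjugacy class $\gamma_f$ in $P_W\subset B_W$ contains the product of relative full twists displayed in the statement. That product is a positive braid, since each $\pi_{W_{k}/W_{k+1}}=\Delta_{H\backslash G}\Delta_{G/H}$ is positive and all $d_j>0$; this matches the lemma identifying the Coxeter-theoretic relative full twists with the geometric ones. It is therefore conjugate to $\beta_f^+$, and the same cyclicity argument gives the claimed isomorphisms for $\Delta$ and $\nabla$.
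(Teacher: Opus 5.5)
Your proposal is correct and follows the paper's route: the paper derives the corollary directly from Proposition~\ref{prop:root-valuations-determine-braid}, the positive-representative proposition and Corollary~\ref{cor:conjinv}, and obtains part (2) from Theorem~\ref{thm:braidprod} once purity forces $w=1$. Your extra check fills in a step the paper leaves implicit in Corollary~\ref{cor:conjinv}: cyclicity, via $\Delta(\beta')\cong\Delta(\alpha)*\bigl(\Delta(\beta)*\Delta(\alpha^{-1})\bigr)$, absorbs conjugation by non-positive braids. Restricting part (2) to $d_j>0$ is also the right reading, since in the negative case $\b_f^+$ is conjugate to the inverse of the displayed product.
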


\appendix
\section{Local triviality near compact subsets} \label{app:local-triviality}
In this appendix we prove a local triviality result for submersions near compact subsets of a fiber. We expect this result to be standard in differential topology, but were unable to find a reference in the precise form needed here, so we include a proof.

\begin{lemma}\label{lem:loctrivcompact}
Let $f : M \to B$ be a smooth submersion of connected smooth real manifolds, $b\in B$, and $K_b \subset f^{-1}(b) = M_{b}$ be a compact subset. Then there are neighborhoods $U_b \subset M_{b}$ of $K_b$, $V \subset B$ of $b$ and $U\subset M$ of $K_b$ together with a diffeomorphism 
\[ U_b \times V \xrightarrow{\sim} U\]
over $V$.
\end{lemma}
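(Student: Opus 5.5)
The plan is to integrate a family of vector fields on $M$ that lift coordinate vector fields on $B$, the local version of Ehresmann's theorem. Since $K_b$ is compact but the fibers need not be, we will only integrate on a neighborhood of $K_b$; compactness of $K_b$ gives uniform existence times for the flows.

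First, choose a chart $\phi : V_0 \xrightarrow{\sim} \RR^k$ around $b$ with $\phi(b)=0$, and write $\partial_1,\ldots,\partial_k$ for the coordinate vector fields. Because $f$ is a submersion, each point $m\in M_b$ has a neighborhood on which the $\partial_j$ admit smooth lifts $X_j$ with $df(X_j)=\partial_j$. To get such lifts, take submersion charts in which $f$ is a coordinate projection. Cover $K_b$ by finitely many such neighborhoods, and glue the local lifts with a partition of unity subordinate to this cover together with the complement of $K_b$. This gives vector fields $X_1,\ldots,X_k$ on an open neighborhood $W\subset f^{-1}(V_0)$ of $K_b$ with $df(X_j)=\partial_j$. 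Convex combinations of lifts are again lifts, so the glued fields are still lifts.

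Second, define
\[
\Psi(x,s_1,\ldots,s_k)=\Phi^{X_1}_{s_1}\circ\cdots\circ\Phi^{X_k}_{s_k}(x),
\]
where $\Phi^{X_j}$ is the flow of $X_j$, and apply it to $x\in M_b$ near $K_b$. Compactness of $K_b$ is used here. By the standard existence theorem for flows, there are an open $U_b\subset M_b\cap W$ containing $K_b$ (taken relatively compact) and $\varepsilon>0$ such that $\Psi$ is defined and smooth on $U_b\times(-\varepsilon,\varepsilon)^k$ and lands in $W$. Since $df(X_j)=\partial_j$, we have $f(\Phi^{X_j}_s(y))=f(y)+s e_j$ in coordinates. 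Hence $f\circ\Psi(x,s)=\phi^{-1}(s)$, so with $V=\phi^{-1}((-\varepsilon,\varepsilon)^k)$ the map $\Psi$ is a map over $V$.

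Third, show that $\Psi$, after shrinking, is a diffeomorphism onto an open set $U$. At points $(x,0)$ the differential is the identity on $T_xM_b$ plus $X_j(x)$ in the $s_j$-directions. This is an isomorphism, since the $X_j(x)$ map to a basis of $T_bB$ and $T_xM_b=\ker df$. So $\Psi$ is a local diffeomorphism near $U_b\times\{0\}$. It is injective on each fiber, since $\Psi(\cdot,s)$ is a composition of flow diffeomorphisms restricted to $M_b$, and it preserves the $V$-coordinate. Therefore $\Psi$ is injective on all of $U_b\times(-\varepsilon,\varepsilon)^k$: two points with equal image must have the same $s$. An injective local diffeomorphism is a diffeomorphism onto its open image $U$, and $U$ contains $K_b$.

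The main obstacle is the uniformity in the second step, namely choosing $U_b$ and $\varepsilon$ uniformly. We handle it by taking $U_b$ relatively compact in $M_b\cap W$, so that a single $\varepsilon$ works for all flows on the closure of $U_b$. We then shrink $\varepsilon$ until the local-diffeomorphism property holds near $\overline{U_b}\times\{0\}$, again using compactness.
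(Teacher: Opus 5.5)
Your proposal is correct and follows essentially the same route as the paper: lift the coordinate vector fields of a chart at $b$ to a neighborhood of $K_b$ and flow them. Compactness of a relatively compact neighborhood of $K_b$ gives uniform existence times, and injectivity comes from fiber-preservation plus injectivity of flow maps. The differences are cosmetic. The paper lifts via a horizontal distribution and uses the time-one flow of the single radial field $\sum_i t_i\xi_i$, whereas you glue local lifts by a partition of unity and compose the $k$ coordinate flows. Your explicit computation of the differential at $s=0$ also spells out the local-diffeomorphism step that the paper covers only by ``after possibly shrinking''.
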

\begin{proof}
Since $f$ is a submersion, the differential $df : TM \to TB$ is surjective. Let $TM_{v} = \ker(df)$ be the vertical subbundle, and choose a splitting $TM = TM_{v} \oplus H$ (for example by choosing some Riemannian metric on $M$). Then $H$ is a horizontal distribution, allowing us to lift paths locally. We will use the distribution to construct a ``radial'' neighborhood of $K_b$ restricted to which $f$ trivializes.

Let $V \subset B$ be an open neighborhood of $b$ with local coordinates $(t_1,...,t_n)$ such that $b$ is given by $t_1=...=t_n=0$. Then, for sufficiently small $t=(t_1,...,t_n) \in V$ the idea is to lift the path 
\[ [0,1] \to V, s \mapsto (st_1,...,st_n). \]

The horizontal distribution allows us to lift the coordinate vector fields $\frac{\partial}{\partial t_i}$ uniquely to horizontal vector fields $\xi_i$ on $W := f^{-1}(V)$. For the parameter $t=(t_1,...,t_n) \in V$ consider the vector field 
\[ \xi_t := \sum_{i=1}^{n} t_i \xi_i.\]
We may think of this as a vector field $\widehat{\xi}$ on $W \times V$ by putting $\widehat{\xi}(x,t) = (\xi_t(x), 0)$. Consider its maximal flow
\[ \widehat{\phi} : \widehat{D} \to W \times V \] 
where $\widehat{D} \subset \RR \times W \times V$ is the open flow domain. In this situation
\[ \widehat{\phi}(s,x,t) = (\phi_t(s,x),t) \]
where $\phi_t$ is the flow of $\xi_t$. In this way, we get a flow 
\[\phi = \pr_{W}\circ \widehat{\phi}: \widehat{D} \to W, \]
with parameter $t \in V$.

Because $K_b$ is compact, we may choose a relatively compact open neighborhood $K_b \subset U_b \subset M_b$. Let $C$ be its compact closure.

Note that $\xi_0 = 0$, so the flow for $t=0$ exists for all time. This means 
\[  [0,1] \times C \times \{0\} \subset [0,1] \times W \times \{0\} \subset \widehat{D} \]
is contained in the flow domain.

Then, for each point $d = (s,x,0) \in  [0,1] \times C \times \{0\}$, by openness of $\widehat{D}$, we find open neighborhoods $I_d \subset \RR$ of $s$, $U_d \subset W$ of $x$ and $V_d$ of $0$ such that
\[ I_d \times U_d \times V_d \subset \widehat{D}. \]
By compactness of $[0,1] \times C$, we can find finitely many points $d_1,\ldots,d_r$ such that the corresponding neighborhoods cover $[0,1] \times C \times \{0\}$. Let $V' = \bigcap V_{d_j}$, which is non-empty and open. Then 
\[[0,1] \times C \times V' \subset \widehat{D}.\]
In other words, for parameters $t \in V'$, the flow exists at all times $s \in [0,1]$. In particular, we can restrict to $U_b \subset C$ to define the map
\begin{align*} \Phi : U_b \times V' &\to W  \\
(x,t) &\mapsto \phi(1,x,t). 
\end{align*}
Then by construction of the flow, this map satisfies $(f \circ \Phi)(x,t) = \pr_{V'}(x,t) = t$. From here, one sees that $\Phi$ is injective. Indeed, if $\Phi(x,t)=\Phi(x,t')$, then applying $f$ we find that $t=t'$. Then we see that
\[\Phi(x,t) = \phi(1,x,t) = \phi(1,x',t), \]
so by injectivity of the flow map $x \mapsto \phi_t^1(x)$ we find that $x=x'$. After possibly shrinking $V'$ further, the map $\Phi$ is a diffeomorphism over $V'$ onto its image $U:=\Phi(U_b \times V')$. This $U$ is the desired radial neighborhood of $K_b$.
\end{proof}
By parallel transport for the map $U \to V$ we obtain the following statement.
\begin{cor}
In the situation of the above lemma, let $b, b'\in V$, and let $U_{b'}$ be the fiber of $U\to V$ over $b'$. Then any choice of trivialization $U_b \times V \cong U$ of $U\to V$ induces an isomorphism $U_b \cong U_{b'}$, hence an isomorphism
\[\pi_{1}(U_b) \cong \pi_{1}(U_{b'}).\]
Moreover, there is an embedding $U_{b'} \to M_{b'}$, so we obtain 
\[\pi_{1}(U_b) \to \pi_{1}(M_{b'}). \]
\end{cor}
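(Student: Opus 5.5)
Fix an arbitrary smooth path $\gamma:[0,1]\to V$ with $\gamma(0)=b$ and $\gamma(1)=b'$; for instance, the radial segment $s\mapsto s\,t(b')$ in the coordinates $(t_1,\dots,t_n)$ used in the proof of Lemma~\ref{lem:loctrivcompact}, where $V'$ has been shrunk to a coordinate ball so that this segment stays inside it. Rename $V'$ to $V$. The lemma supplies a diffeomorphism $\Phi:U_b\times V\xrightarrow{\sim}U$ with $f\circ\Phi=\pr_V$ and $\Phi(-,b)=\mathrm{id}_{U_b}$; this second property holds because $\xi_0=0$, so the time-one flow at parameter $0$ is the identity.

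Since $f\circ\Phi=\pr_V$, the map $\Phi$ carries $U_b\times\{b'\}$ onto $f^{-1}(b')\cap U=U_{b'}$. Restricting therefore gives a diffeomorphism $\Phi_{b'}:=\Phi(-,b'):U_b\xrightarrow{\sim}U_{b'}$, and hence an isomorphism $(\Phi_{b'})_*:\pi_1(U_b)\cong\pi_1(U_{b'})$. This map is only canonical up to the choice of base points, so we state it for free homotopy classes, or after transporting base points along $s\mapsto\Phi(x_0,\gamma(s))$.

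Next we check that the identification does not depend on the trivialization up to homotopy. Given two trivializations $\Phi,\Phi'$ restricting to the identity over $b$, the composite $\Phi^{-1}\circ\Phi'$ is a family of diffeomorphisms $\psi_v:U_b\to U_b$, parametrized by $v\in V$, with $\psi_b=\mathrm{id}$. Since $V$ is a ball, the family $s\mapsto\psi_{\gamma(s)}$ is an isotopy from the identity to $\psi_{b'}$. Consequently $(\Phi_{b'})_*$ and $(\Phi'_{b'})_*$ agree on free homotopy classes. This is precisely the form in which the statement is used in Theorem~\ref{thm:initialloop} and Theorem~\ref{thm:braidrep}.

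For the second claim, $U_{b'}=U\cap M_{b'}$ is open in the fiber $M_{b'}$, so the inclusion $\iota_{b'}:U_{b'}\hookrightarrow M_{b'}$ is an open embedding. Composing gives $(\iota_{b'})_*\circ(\Phi_{b'})_*:\pi_1(U_b)\to\pi_1(M_{b'})$.

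The one point requiring care is the verification that $\Phi(-,b')$ lands exactly onto the fiber of $U$ and is surjective there. This follows from the identity $f\circ\Phi=\pr_V$ together with $U=\Phi(U_b\times V)$. Everything else is formal.
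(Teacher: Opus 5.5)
Your proposal is correct and is the argument the paper has in mind: the paper's only justification is ``by parallel transport for the map $U\to V$'', which is exactly your restriction $\Phi(-,b')\colon U_b\xrightarrow{\sim}U_{b'}$ composed with the open inclusion $U_{b'}\subset M_{b'}$. Your extra isotopy argument, showing that the identification is independent of the trivialization on free homotopy classes, goes beyond what this corollary asserts; the paper obtains that independence separately in Corollary~\ref{cor:centloop}(3), by comparing both transports with a given continuous family of loops.
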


The following corollary now follows easily. 

\begin{cor}\label{cor:centloop}
Let $f : M \to \RR$ be a smooth submersion, let $C$ be compact and suppose
\[ P : C \times [0,\d) \to M \]
is continuous and satisfies $f(P(c,t)) = t$. Write $p_t(c) := P(c,t), K_0 := p_0(C) \subset M_0$. Let $\Psi : U_0 \times V \cong U$ be any local trivialization over a neighborhood $V$ of $0$ with $K_0 \subset U_0$ and $\Psi(x,0) = x$ for $x\in U_0$. Then, after decreasing $\d$, the following hold:
\begin{enumerate}
\item We have $p_t(C) \subset U_t := M_t \cap U$ for every sufficiently small $t \ge 0$.
\item Let $p_{t}^{\Psi} : C \to U_t$ be the parallel transport of $p_0$ defined by $\Psi$. Then $p_{t}^{\Psi}$ and $p_{t}$ are homotopic via maps $C\to U_t$.
\item The homotopy class of $p_{t}^{\Psi} : C \to M_t$ is independent of the chosen local trivialization.
\end{enumerate}
Moreover, if 
\[ Q : C \times [0,\d) \to M \] is another continuous map satisfying $f(Q(c,t)) = t$, $Q(c,0) = p_0(c)$, then for $q_t(c) := Q(c,t)$ the maps $p_t$ and $q_t$ are homotopic after decreasing $\d$ if necessary. 
\end{cor}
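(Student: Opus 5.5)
Fix the trivialization $\Psi\colon U_0\times V\cong U$ of Lemma~\ref{lem:loctrivcompact}, with $\Psi(x,0)=x$ and $K_0=p_0(C)\subset U_0$. We then treat the statements in order.

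For (1), the set $U$ is an open neighborhood of $K_0$ in $M$. The map $P$ is continuous and $C\times\{0\}$ is compact, so a tube-lemma argument gives some $\delta'>0$ with $P(C\times[0,\delta'))\subset U$. Shrinking $\delta$ to $\min(\delta,\delta')$ and intersecting with $V$ gives $p_t(C)\subset U\cap M_t=U_t$.

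For (2), we transport everything to the product. Write $\Psi^{-1}\circ P=(a,b)$ with $a\colon C\times[0,\delta)\to U_0$ continuous and $b(c,t)=t$, since $\Psi$ is a map over $V$. Then $a(c,0)=p_0(c)$, and the parallel transport is $p_t^\Psi(c)=\Psi(p_0(c),t)$. The homotopy
\[
H_s(c)=\Psi\bigl(a(c,st),t\bigr),\qquad s\in[0,1],
\]
lies in $U_t$ for every $s$. At $s=1$ it equals $p_t$, and at $s=0$ it equals $p_t^\Psi$. This proves (2).

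The final ``moreover'' statement follows the same way. Apply (1) and (2) to $Q$ with the same $\Psi$; this is possible because $Q(c,0)=p_0(c)$ also has image $K_0$. Then both $p_t$ and $q_t$ are homotopic within $U_t$ to the same map $p_t^\Psi$.

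For (3), take a second trivialization $\Psi'$ over $V'$, with $U_0'$ containing $K_0$ and $\Psi'(x,0)=x$. Apply (2) to $P$ with respect to both trivializations. For small $t$ this gives $p_t^\Psi\simeq p_t\simeq p_t^{\Psi'}$ in $M_t$, since both $U_t$ and $U'_t$ embed in $M_t$. So the homotopy class of $p_t^\Psi$ in $M_t$ is independent of the trivialization.

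The main subtlety is in (3). Independence holds only in $M_t$, not in $U_t$, and it requires $t$ below both thresholds. The argument therefore shrinks $\delta$ once for each pair of trivializations being compared, which is harmless because only finitely many are ever compared at a time. The argument also needs $\delta\le$ the radius of $V$, which is built into the shrinking in (1).
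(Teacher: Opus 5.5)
Your proposal is correct and follows the paper's proof essentially step by step. The paper uses the tube lemma for (1) and the homotopy $H_s(c)=\Psi(a(c,st),t)$ with $a=\pr_1\circ\Psi^{-1}\circ P$ for (2); for (3) it uses the chain $p_t^{\Psi}\simeq p_t\simeq p_t^{\Psi'}$ in $M_t$ after shrinking $\delta$ for both trivializations, and for the final claim $p_t\simeq p_t^{\Psi}=q_t^{\Psi}\simeq q_t$.
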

\begin{proof}
We prove (1). Since $U$ is an open neighborhood of $K_0$, we have that $P^{-1}(U) \subset C \times [0,\d)$ is an open neighborhood of $C \times \{0\}$. By compactness of $C$ and the tube lemma we may assume
\[ P(C\times [0,\delta)) \subset U \]
at the expense of shrinking $\d$, proving (1).

We prove (2). Define the map $a := \pr_1 \circ \Psi^{-1} \circ P : C \times [0,\d) \to U_0$. Then $\Psi^{-1}(p_t(c)) = (a(c,t),t)$, and $a(c,0) = p_0(c)$. The parallel transport of $p_0$ defined by $\Psi$ is the map 
\[ p_t^{\Psi}(c) = \Psi(p_0(c),t). \]
We define the desired homotopy as follows
\[ H_t : C \times [0,1] \to U_t, \qquad H_t(c,s) := \Psi(a(c,st),t).\]
Since $\Psi$ is a map over $V$, the image of $H_t$ is contained in the fiber $U_t$. Moreover, we have
\[ H_t(c,0) = \Psi(a(c,0),t) = \Psi(p_0(c),t) = p_t^{\Psi}(c).\]
On the other hand, $H_t(c,1) = p_t(c)$, and $H_t$ is the desired homotopy. This proves (2).

We prove (3). Assume
\[ \Psi' : U_0' \times V' \cong U' \]
is another local trivialization near $K_0$. After decreasing $\d$ further, the above argument applies to $\Psi$ and $\Psi'$. Therefore, by (2), we have a chain of homotopies  $p_t^{\Psi} \simeq p_t \simeq  p_t^{\Psi'}$. This proves (3).

The final statement follows similarly: since $q_0 = p_0$, their parallel transports agree, and hence $p_t \simeq p_t^{\Psi} = q_t^{\Psi} \simeq q_t$.
\end{proof}

\end{document}